\documentclass{amsart}

\usepackage{verbatim}
\usepackage{color}

\renewcommand{\a}{\alpha}

\newcommand{\e}{\epsilon}

\newtheorem{thm}{Theorem}
\newtheorem{prop}[thm]{Proposition}
\newtheorem{lem}[thm]{Lemma}

\newtheorem{rmk}[thm]{Remark}

\numberwithin{equation}{section}

\title[Local Existence]{A Local Existence Result for Poincar\'e-Einstein metrics}

\author{Matthew J. Gursky}
\address{Department of Mathematics
         University of Notre Dame\\
         Notre Dame, IN 46556}

\author{G\'{a}bor Sz\'{e}kelyhidi}
\address{Department of Mathematics\\
         University of Notre Dame\\
         Notre Dame, IN 46556}

\begin{document}

\begin{abstract} Given a closed Riemannian manifold $(M, g_M)$ of dimension $n \geq 3$, we prove the existence of a conformally compact Einstein metric $g_{+}$ defined on a collar neighborhood $M \times (0,1]$ whose  conformal infinity is $[g_M]$.

\medskip 
\noindent {\bf Keywords:} Einstein metric, conformally compact, local existence 
\end{abstract}

\maketitle

\section{introduction}

Let $X$ be the interior of a compact manifold with boundary $\overline{X}$ of dimension $n+1$, and let $M  = \partial X$ denote the boundary.  A metric $g_{+}$ defined on $X$ is said to be {\em conformally compact} if there is a defining function $\rho \in C^{\infty}(X)$ with $\rho > 0$ and $d\rho \neq 0$ on $\partial X$, such that $\rho^2 g$ extends to a metric
$\overline{g}$ on $\overline{X}$.  Since we can multiply $\rho$ by any smooth positive function on $\overline{X}$, a conformally compact metric naturally defines a conformal class of metrics $[ \overline{g} ]$ on $M = \partial X$, called the {\em conformal infinity} of $(X,g)$.

If in addition $g_{+}$ satisfies the Einstein condition, which we normalize by
\begin{align} \label{EE}
Ric(g_{+}) = -n g_{+},
\end{align}
then we say that $(X,g_{+})$ is a {\em Poincar\'e-Einstein} (P-E) manifold.  The motivating example of P-E manifolds is the Poincar\'e ball model of hyperbolic space $(\mathbf{B}^{n+1}, g_{\mathbf{H}})$,  and in this case the conformal infinity is the conformal class of the round sphere $\mathbf{S}^n = \partial \mathbf{B}^{n+1}$.
P-E manifolds play a fundamental role in the Fefferman-Graham theory of conformal invariants (see \cite{FG}), and in the AdS/CFT correspondence in quantum field theory (see, for example, \cite{Maldecena}).  Our main interest in this paper is the question of existence:  given a conformal class $[ g_M ]$ on the closed manifold $M = \partial X$, is there a Poincar\'e-Einstein metric $g_{+}$ defined in $X$ whose conformal infinity is $[g_M]$?

A seminal existence result was proved by Graham-Lee in \cite{GL}: given a metric $\gamma$ sufficiently close to the round metric $\gamma_0$ on the sphere $\mathbf{S}^n$, there is a Poincar\'e-Einstein metric $g_{+}$ on the ball $\mathbf{B}^{n+1}$ whose conformal infinity is $[\gamma]$.  Later, Lee \cite{LeeMemoirs} extended this prove the existence of P-E metrics whose conformal infinity is sufficiently close to the conformal infinity of a given P-E metric, provided the linearized operator (suitably defined) is invertible.  Anderson \cite{An1} proved a more general existence result on $\mathbf{S}^3$: any conformal class with positive Yamabe invariant is the conformal infinity of a P-E metric.

By contrast, in joint work with Q. Han (\cite{GH}) the first author proved a non-existence result for conformal classes on $\mathbf{S}^7$:  there are infinitely many conformal classes (which can be taken in different components of the space of PSC metrics) which {\em cannot} be the conformal infinity of a P-E metric in the ball $\mathbf{B}^8$.  The proof uses in a crucial way the work of Gromov-Lawson \cite{GrL} on the space of PSC metrics on $\mathbf{S}^7$, demonstrating that the existence is Poincar\'e-Einstein fillings is influenced by the topology of $X$ as well as the geometry of the conformal infinity.

 Since there are obstructions to the global existence of Poincar\'e-Einstein fillings, in this paper we consider a local version: given a closed Riemannian manifold $(M, g_M)$, we find a conformally compact Einstein metric $g_{+}$ defined on a collar neighborhood $M \times (0,1]$ such that the conformal infinity of $g_{+}$ is $[g_M]$ (a more precise statement is given below).  If $M$ is real analytic, then there is always a P-E metric defined on a collar neighborhood $M \times (0,1]$; this was proved when $M$ is odd-dimensional by Fefferman-Graham in \cite{FG}, and in the even-dimensional case by Kichenassamy in \cite{Kiss}.  Also, LeBrun used twistor methods to construct an ASD Poincar\'e-Einstein metric in a collar neighborhood of any real analytic three-manifold, see \cite{LeBrun}.    Our interest in this paper is therefore in the $C^{\infty}$ category, and our main result is: \\

\begin{thm}\label{thm:mainthm}  Let $(M,g_M)$ be a smooth, connected, closed manifold of dimension $n \geq 3$.  Then there is a metric $g_{+}$ defined on $X = M \times (0,1]$ with the following properties: \\

\noindent $(i)$ $(X,g_{+})$ is a manifold with boundary $\partial X = M \times \{1\} \cong M$ satisfying the Einstein condition:
\begin{align*}
Ric(g_{+}) + n g_{+} = 0.
\end{align*}

\noindent $(ii)$ $(X,g_{+})$ is conformally compact with conformal infinity given by $(M,[g_M]).$  More precisely, there is a defining function $\rho \in C^{\infty}(X)$ such that $\bar{g} = \rho^2 g_{+}$
defines a $C^0$-metric on the compact manifold with boundary $\overline{X} = M \times [0,1 ]$ with
\begin{align*}
\bar{g}\big \vert_{M \times \{ 0 \}} = g_M.
\end{align*}
\end{thm}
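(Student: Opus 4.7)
The plan is to combine the Fefferman--Graham formal expansion with a perturbative argument to upgrade an approximate Einstein metric to an exact one on a (possibly thin) collar. In the geodesic defining-function gauge $g_+=\rho^{-2}(d\rho^2+h_\rho)$ with $h_0=g_M$, the Einstein equation $Ric(g_+)+ng_+=0$ becomes a nonlinear system for the family $h_\rho$ with a regular singularity at $\rho=0$. The Fefferman--Graham analysis produces a formal power series solution (with $\log\rho$ terms starting at order $n$ when $n$ is even); by Borel summation this yields a smooth family $\hat h_\rho$ on $M\times[0,1]$ whose associated metric $\hat g_+$ satisfies $E:=Ric(\hat g_+)+n\hat g_+=O(\rho^\infty)$ as $\rho\to 0$.

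Next, I would seek $g_+=\hat g_++k$ with $k$ a symmetric $2$-tensor vanishing to infinite order at $\rho=0$. Writing the Einstein equation in DeTurck gauge relative to $\hat g_+$ gives a quasilinear elliptic equation $P(\hat g_++k)=0$ with $P(\hat g_+)=E$. The linearization $L=DP_{\hat g_+}$ is a Lichnerowicz-type operator, uniformly degenerate at $\rho=0$ in the sense of Mazzeo--Melrose, with indicial roots $0$ and $n$ as in Graham--Lee. The central linear step is to solve $Lk=f$ for $f=O(\rho^\infty)$ and obtain $k=O(\rho^\infty)$. Because the indicial operator is invertible off the roots $0$ and $n$, the formal Taylor expansion of $k$ at $\rho=0$ is determined termwise by that of $f$; Borel summation of this expansion, combined with interior elliptic theory on the sub-collar, produces a right inverse $G$ with tame estimates in weighted H\"older spaces.

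A standard contraction-mapping argument then solves $P(\hat g_++k)=0$ in the Fr\'echet space of tensors vanishing to infinite order at $\rho=0$: the residual $E$ has arbitrarily small weighted norm after shrinking the collar, and the nonlinearity is quadratic in $(k,\partial k)$. Once $k$ is obtained, one verifies that $g_+$ is genuinely Einstein, and not merely a solution of the gauge-fixed equation, by the usual Bianchi argument: the DeTurck vector field satisfies a homogeneous linear equation with $O(\rho^\infty)$ boundary data at $\rho=0$ and so vanishes identically. After rescaling $\rho$, the resulting metric is defined on $M\times(0,1]$ and has conformal infinity $[g_M]$ as required.

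The principal obstacle is the linear analysis: producing a right inverse of the degenerate operator $L$ on infinite-order-vanishing tensors with sufficiently uniform estimates to drive the nonlinear iteration. Unlike the global conformally compact Einstein problem, where Mazzeo's $0$-calculus yields Fredholm obstructions tied to the indicial roots, here the infinite-order vanishing removes those obstructions in principle; but making this rigorous on a local collar, rather than on a complete manifold, requires carefully combining a formal Taylor-series inversion at $\rho=0$ with an interior elliptic parametrix, while tracking uniformity of constants as the collar shrinks. Once this linear theory is in place, the nonlinear step is essentially routine.
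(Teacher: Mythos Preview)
Your outline correctly identifies the Fefferman--Graham approximate solution and the need to invert a Lichnerowicz-type operator, but it has a genuine gap at the \emph{inner} boundary of the collar. You work on $M\times(0,\delta]$: besides conformal infinity at $\rho=0$, there is a finite boundary at $\rho=\delta$, and you impose no condition there for either the linear problem or the Bianchi step. For $Lk=f$ on a collar with only the requirement $k=O(\rho^\infty)$ at $\rho=0$, the problem is under-determined (the kernel is infinite-dimensional), so ``Borel summation plus interior elliptic theory'' does not by itself produce a well-defined right inverse with the uniform estimates you need; some boundary condition at $\rho=\delta$ must be chosen, and that choice then interacts with the gauge argument. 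More seriously, ``the usual Bianchi argument'' does not close on a collar: the maximum-principle inequality $\Delta_{g_+}|\omega|^2\ge -K|\omega|^2$ only forces the maximum of $|\omega|$ onto $\partial(M\times(0,\delta])$, and knowing $\omega=O(\rho^\infty)$ at $\rho=0$ does not prevent that maximum from sitting at $\rho=\delta$. One might hope to substitute a unique-continuation-at-infinity theorem for $0$-elliptic operators, but that is a nontrivial ingredient you do not invoke, and its hypotheses for this particular Bianchi-type system would need to be verified.

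This inner-boundary issue is precisely where the paper's work is concentrated, and it drives an approach rather different from yours. The paper imposes the Bianchi condition $\beta_{g_\epsilon}(h)=0$ as a boundary condition on the inner boundary, which makes the maximum-principle argument valid, at the price of rendering the linearized boundary-value problem non-elliptic (under-determined) and generally not surjective. Rather than $0$-calculus on infinite-order-vanishing tensors, the paper rescales $g_M$ by $\epsilon^{-2}$ so that the local model is hyperbolic space, solves the Bianchi boundary-value problem on the model explicitly via Fourier analysis in the $M$-directions, glues these local inverses with controlled decay, and appends a finite-dimensional space of extra deformations to kill the resulting cokernel. Your strategy is a different and reasonable line of attack, but as written it sidesteps rather than resolves the central difficulty.
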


To give a sketch of our approach we begin by considering the model case.  Let $dx^2$ denote the Euclidean metric on $\mathbf{R}^n$, and on $\mathbf{H}^{n+1} = \mathbf{R} \times \mathbf{R}^n$
let $g_{\mathbf{H}}$ denote the hyperbolic metric
\begin{align*}
g_{\mathbf{H}} = dt^2 + e^{2t} dx^2.
\end{align*}
We can recover the standard upper half-space model by letting $t = \log \frac{1}{y}$, so that
\begin{align*}
g_{\mathbf{H}}  = \dfrac{dy^2 + dx^2}{y^2}.
\end{align*}
In particular, restricting to $\{ (x,y)\ :\ x \in \mathbf{R}^n, y \in (0,1] \}$ we obtain an Einstein metric $g_{+} = g_{\mathbf{H}}$ on the manifold with boundary $\mathbf{H}_{+}^{n+1} = [ 0, \infty) \times \mathbf{R}^n$, whose compactification $\bar{g} = y^2 g_{\mathbf{H}}$ gives the Euclidean metric on the boundary.

Given a compact manifold $(M,g_M)$ and $\epsilon > 0$ small, as a first approximation we define the metric
\begin{align*}
g_\epsilon = dt^2 + e^{2t} \epsilon^{-2}g_M
\end{align*}
on $[0, \infty) \times M$.   On a fixed compact set, when $\epsilon >
0$ is small the metric $g_{\epsilon}$ is close to the hyperbolic
metric $g_{\mathbf{H}}$.  Our goal is to perturb $g_{\epsilon}$  to
obtain a Poincar\'e-Einstein metric $g_{+} = g_{\epsilon} + h$ on
$M_{+} = [0,\infty) \times M$.  If we compactify by letting $\bar{g} =
\epsilon^2 e^{-2t}g_{+}$, then assuming $h$ decays fast enough it
follows that $\bar{g}\big|_{y=0} = g_M$ as required.

One advantage of
rescaling $(M,g_M)$ and considering $g_{\e}$ is that the linearized
problem can be reduced, via a cutting and pasting method, to the
linearized problem on the model space $\mathbf{H}_{+}$, where Fourier
transform methods can be used. This is somewhat reminiscent of ``gluing''
problems along submanifolds in the literature, such as
Taubes~\cite{Taubes} and more specifically Brendle~\cite{Brendle} in the
context of gauge theory. One key difference in our setting is that our
model geometry is not a product.

It turns out that the metric $g_{\epsilon}$ is not a sufficiently good
approximation.  Roughly,
$g_{\epsilon}$ is a solution up to an error of order $\epsilon^2$, but
our estimates for the linearized operator require the error to be of
order smaller than $\epsilon^4$ in order to use a fixed point argument.   To remedy this we appeal to the
formal solutions of Fefferman-Graham \cite{FG} to `correct' $g_{\epsilon}$; see
Lemma \ref{lem:errorestimate} below.

As in the global existence problem for Einstein metrics we also need to
compensate for diffeomorphism invariance by introducing a
`gauge-fixed' version of the problem.  We will consider a slight
variant of the mapping defined by Graham-Lee in \cite{GL}, but the
essential idea is the same: we add a Lie derivative term {\em \`{a}
  la} DeTurck~\cite{DeTurck} in order to cancel out the degeneracies in the symbol of
the linearized operator.

To prove that a zero of the gauge-fixed mapping
is an Einstein metric, Graham-Lee used the Bianchi condition along with
a maximum principle argument (see Lemma 2.2 of \cite{GL}).   To prove the
analogous result in our setting we need to impose an appropriate boundary
condition on the `inner' boundary.  This introduces a number of technical
issues that have no obvious counterpart in the work of Graham-Lee or Graham.  For example,
we will see that our (gauge-fixed) linear operator will in general have a finite dimensional
cokernel, and we need to append the domain of the nonlinear mapping in order to
get surjectivity. In addition our boundary condition is not elliptic,
since it is underdetermined. One could attempt to add additional boundary
conditions such as those introduced by Schlenker~\cite{Sch} and Anderson~\cite{An08} to obtain
an elliptic boundary value problem, however it seems difficult to
identify the cokernels of these operators.  

In this context we should also mention the work of Chru\'{s}ciel-Delay-Lee-Skinner on boundary regularity for Poincar\'e-Einstien metrics~\cite{CDLS}, 
in which they construct a harmonic map on a collar neighborhood of the boundary using a perturbation argument (see Theorem 4.5).   However, 
they are imposing Dirichlet boundary conditions, and the invertibility of their linearized map follows from Theorem C of \cite{LeeMemoirs}. 

In the next section we will begin by introducing the nonlinear problem
and the `inner' boundary condition, and assuming the invertibility of the
linearized problem we prove our main result. The remainder of the paper
will be concerned with constructing a right inverse for the linearized
operator.

\subsection*{Acknowledgements}   The first author is supported in part by NSF grant DMS-1509633.  The second author is supported in part by
NSF grant DMS-1350696.

\ \\

\section{The nonlinear problem}  \label{SecNonlinear}

As in the Introduction, let $(M,g_M)$ be a compact $n$-manifold, and for $\epsilon > 0$ we define the metric $g_{\epsilon}$ on
$M\times [0,\infty)$ by
\[ g_\epsilon = dt^2 + e^{2t} \epsilon^{-2}g_M. \]
We want to find a symmetric $2$-tensor $h$ with sufficient decay at infinity so that
\begin{align} \label{goal}
\mathrm{Ric}(g_\epsilon + h) + n(g_\epsilon + h) = 0,
\end{align}
i.e., $g=g_{\epsilon} + h$ is a Poincar\'e-Einstein metric.
Before providing an outline of our argument, we begin with some
preliminary remarks and definitions.

We will work in weighted H\"older spaces $C^{k,\alpha}_\delta = e^{-\delta
  t}C^{k,\alpha}$, with the norm
\[ \Vert f \Vert_{C^{k,\a}_\delta} = \Vert e^{\delta t}
f\Vert_{C^{k,\a}} \]
in terms of the usual H\"older spaces (see Lee~\cite{LeeMemoirs} Chapter 3).  This norm extends to sections of the
various tensor bundles; e.g. $C_{\delta}^{k,\alpha}(S^2)$ will denote the space of symmetric two-tensors with respect to this norm.
We will choose the weight $\delta = 1$; in practice any weight $\delta\in(0,n)$ would
work, provided we start with a sufficiently good approximate solution.  Constructing a better approximate solution than $g_{\epsilon}$ is
the point of our first technical lemma:

\begin{lem} \label{lem:errorestimate} Given $(M,g_M)$, there are symmetric $2$-tensors $k^{(2)}, k^{(4)}$ defined on $M$ such that if
\begin{align} \label{gepp}
g_\epsilon' = g_\epsilon + k^{(2)} + e^{-2t}\epsilon^2 k^{(4)},
\end{align}
then
\begin{align} \label{app}
\Vert \mathrm{Ric}(g_\epsilon') +
  ng_\epsilon'\Vert_{C^{0,\alpha}_1} = O(\epsilon^6).
  \end{align}
\end{lem}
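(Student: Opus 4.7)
The plan is to regard $g_{\epsilon}$ as the leading piece of a Fefferman--Graham expansion and choose $k^{(2)}, k^{(4)}$ as the first two coefficients determined by the formal power-series solution of the Einstein equation in \cite{FG}. I would begin with the change of variables $y = \epsilon e^{-t}$, under which $g_{\epsilon}$ becomes $y^{-2}(dy^2 + g_M)$ on $M \times (0,\epsilon]$ and the corrected metric \eqref{gepp} becomes
\[ g_{\epsilon}' = y^{-2}\bigl(dy^2 + g_M + y^2 k^{(2)} + y^4 k^{(4)}\bigr). \]
This is precisely the $y^4$-truncation of a Fefferman--Graham ansatz $g_+ = y^{-2}(dy^2 + h_y)$ with $h_y = g_M + \sum_{j \geq 1} y^j h^{(j)}$, for which the condition $\mathrm{Ric}(g_+) + n g_+ = 0$ produces, order by order in $y$, algebraic relations that recursively determine each $h^{(j)}$ from lower-order coefficients and the curvatures of $g_M$.

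I would then invoke this recursion to define $k^{(2)}$ and $k^{(4)}$, setting any ``free'' transverse-traceless data to zero. For $n \geq 5$ both tensors are uniquely determined by $g_M$; in particular $k^{(2)}$ is a constant multiple of the Schouten tensor. In dimension $n = 3$ the only free datum is $h^{(3)}$, and the parity of the recursion (starting from the purely even initial data $g_M, h^{(2)}$) allows us to set $h^{(3)} = 0$ consistently, forcing all odd coefficients to vanish. The most delicate case is $n = 4$, where a $y^4 \log y$ obstruction proportional to the Bach tensor of $g_M$ in general appears; using $\log y = \log \epsilon - t$ one can either absorb the $\log \epsilon$ piece into $k^{(4)}$ or treat the $-t$ piece as a separate correction whose contribution remains controlled in the final weighted norm. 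With these choices the $y$-expansion of $\mathrm{Ric}(g_{\epsilon}') + n g_{\epsilon}'$ has vanishing coefficients through order $y^4$, so that
\[ \mathrm{Ric}(g_{\epsilon}') + n g_{\epsilon}' = O(y^6) \]
as a smooth tensor on $M \times (0,\epsilon]$, with constants depending only on $g_M$.

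Finally I would convert this $y$-decay back into the weighted H\"older norm. Substituting $y = \epsilon e^{-t}$ gives $|\mathrm{Ric}(g_{\epsilon}') + n g_{\epsilon}'| \leq C \epsilon^6 e^{-6t}$ pointwise, and since $\Vert \cdot \Vert_{C^{0,\alpha}_1}$ inserts an extra factor of $e^t$,
\[ \Vert \mathrm{Ric}(g_{\epsilon}') + n g_{\epsilon}' \Vert_{C^{0,\alpha}_1} \leq C\, \epsilon^6 \sup_{t \geq 0} e^{-5t} = C \epsilon^6, \]
with the H\"older seminorm handled analogously after differentiating the smooth remainder tensor. This yields \eqref{app}.

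The main obstacle I anticipate is the bookkeeping in the Fefferman--Graham recursion: verifying that truncating $h_y$ at the $y^4$-level really produces an Einstein error of order $y^6$ and not of some intermediate order (e.g.\ a surviving $y^5$ piece in odd dimensions or a $y^4 \log y$ term when $n = 4$), and handling cleanly the logarithmic obstruction in the latter case. Once those dimension-dependent issues are pinned down, the rest of the argument reduces to the elementary scaling computation above.
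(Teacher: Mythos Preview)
Your approach coincides with the paper's: both truncate the Fefferman--Graham expansion at order $r^4$ and substitute $r=\epsilon e^{-t}$, and for $n=3$ and $n\geq 5$ the argument goes through as you describe. The paper is terser, simply citing \cite{FG} for the existence of $k^{(2)},k^{(4)}$ rather than rederiving the recursion, but the content is the same.

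Your caution about $n=4$ is well placed and in fact flags a gap that the paper's own proof does not close. The paper asserts that for $n=4$ ``one can choose'' $k^{(4)}$ so that $\mathrm{Ric}(\tilde g)+n\tilde g=O(r^6)$, but in the order-$r^2$ coefficient of the tangential Einstein tensor the term coming from $k^{(4)}$ is $(2n-8)k^{(4)}_{ij}+2(\mathrm{tr}_{g_M}k^{(4)})g_{M,ij}$, which is pure trace when $n=4$. The trace-free part of that coefficient is the Bach tensor of $g_M$ and cannot be killed by any choice of $k^{(4)}$, so with the ansatz \eqref{gepp} one only obtains $|\mathrm{Ric}(g_\epsilon')+ng_\epsilon'|_{g_\epsilon}=O(\epsilon^4 e^{-4t})$ and hence $\Vert\cdot\Vert_{C^{0,\alpha}_1}=O(\epsilon^4)$, not $O(\epsilon^6)$. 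Your idea of including the $y^4\log y$ term is the right fix, but it changes the stated form of $g_\epsilon'$ (an extra $t\,e^{-2t}\epsilon^2$ correction is needed). So for $n=4$ neither your proposal nor the paper's proof establishes the lemma exactly as written; the approximate metric must be modified when the Bach tensor is nonzero.
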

\begin{proof}  In \cite{FG}, Fefferman-Graham proved the existence of a one-parameter family of metrics $\gamma_r$ on $M$ such that the metric on $M \times (0, 1 ]$ given by
\begin{align*}
g_{+} = r^{-2} \big( dr^2 + \gamma_r \big)
\end{align*}
satisfies
\begin{align} \label{odd}
\mathrm{Ric}(g_{+}) + n g_{+} = O(r^{\infty})
\end{align}
when $n$ is odd, and
\begin{align} \label{even}
\mathrm{Ric}(g_{+}) + n g_{+} = O(r^{n-2})
\end{align}
when $n$ is even.  The metric $\gamma_r$ is given by a formal power series
\begin{align} \label{FPS}
\gamma_r = g_M + k^{(2)} r^2 + \cdots
\end{align}
in even powers of $r$ up to order $n-1$ when $n$ is odd, and up to order $n-2$ when $n$ is even.  Moreover, the coefficients in this range are determined by $g_M$, and obtained by
differentiating (\ref{odd}) (or (\ref{even})) and evaluating at $r =0$.  Up to a diffeomorphism fixing $M$, when $n$ is odd  there is in fact a unique formal power series solution of (\ref{odd}).  When $n$ is even, formal power series exist but they are not
unique (even modulo diffeomorphisms); see Theorem 2.3 of \cite{FG}.

Applying the Fefferman-Graham result to our setting, we conclude the following:  When the dimension $n$ is odd, there are tensors $k^{(2)}, k^{(4)}$ determined by $g_M$ such that the metric
\begin{align} \label{FPS2}
\widetilde{g} = r^{-2} \big( dr^2 + g_M + k^{(2)} r^2 + k^{(4)} r^4 \big)
\end{align}
satisfies
\begin{align} \label{app1}
\mathrm{Ric}(\widetilde{g}) + n \widetilde{g} = O(r^6).
\end{align}
The same holds when $n \geq 6$ is even.  When $n=4$, the coefficient $k^{(4)}$ in (\ref{FPS2}) is not determined by $g_M$, but one can choose such a tensor so that (\ref{app1}) holds.

To complete the proof of the lemma, for $0 < r \leq \epsilon$ we let
\begin{align*}
t = \log \frac{\epsilon}{r}.
\end{align*}
Then we can rewrite the metric in (\ref{FPS2}) as
\begin{align*}
\widetilde{g} = dt^2 + e^{2t} \epsilon^{-2}  g_M + k^{(2)} + e^{-2t} \epsilon^2 k^{(4)},
\end{align*}
which holds on $M \times [0,\infty)$.  Also, by (\ref{app1}),
\begin{align*}
\mathrm{Ric}(\widetilde{g}) + n \widetilde{g} = O (\epsilon^6 e^{-6t}).
\end{align*}
Taking $g_{\epsilon}' = \widetilde{g}$, the estimate (\ref{app}) follows.
 \end{proof}

\begin{rmk} Since $g_{\epsilon}$ and $g_{\epsilon}'$ are uniformly
equivalent, we can use either to measure norms defined above.
\end{rmk}

To slightly rephrase our goal in light of the preceding, we want to
find a symmetric $2$-tensor $h \in C^{2,\alpha}_1$ with sufficient decay at infinity so that
\begin{align} \label{modgoal}
\mathrm{Ric}(g_{\epsilon}' + h) + n(g_{\epsilon}' + h) = 0.
\end{align}
 The next issue we address is the well known lack of ellipticity
of the linearization of this equation.  We overcome this by using the standard technique of modifying by a
`gauge-fixing' term.  To explain this we need to introduce some notation.

For metrics $g$ and $\tilde{g}$ define the mapping
\begin{align} \label{Ndef}
\mathcal{N}_{\tilde{g},g}[h] = \mathrm{Ric}(\tilde{g} + h) + n (\tilde{g} + h) + \delta^*_{\tilde{g} + h} \beta_{g}(h),
\end{align}
where
\begin{align} \label{beta}
  \beta_{g}(h)_j = -(\nabla_g)^i h_{ij} + \frac{1}{2}
  (\nabla_g)_j ( \mathrm{tr}_{g} h)
\end{align}
is the {\em Bianchi operator}, and
\begin{align} \label{dual}
\delta^*_{\tilde{g}+h}(\omega)_{ij} = \frac{1}{2} \big( \nabla_{\tilde{g}+h,i} \omega_j + \nabla_{\tilde{g}+h,j} \omega_j \big)
\end{align}
is the $L^2$-adjoint of the divergence operator.  We also let
\begin{align} \label{LL}
L_{\tilde{g},g}(h) = \frac{d}{ds} \mathcal{N}_{\tilde{g},g}[ sh] \big|_{s=0}
\end{align}
denote the linearization of $\mathcal{N}$ at $h=0$.  It follows that
\begin{align*}
L_{\tilde{g},g}(h) = (D\mathrm{Ric}_{\tilde{g}} + n)h + \delta^*_{\tilde{g}} \beta_{g}(h),
\end{align*}
where $D\mathrm{Ric}$ denotes the linearization of the Ricci tensor.
From standard formulas (see e.g. Besse \cite{Besse}) we have
\begin{align} \label{Lform}
L_{\tilde{g},g}(h) = -\frac{1}{2} \Delta_{\tilde{g}} h + \mathfrak{D}_{\tilde{g}}(h) + nh + \mathcal{R}_{\tilde{g}}(h),
\end{align}
where $\mathfrak{D}_{\tilde{g}}$ is given by
\begin{align} \label{Dform}
\mathfrak{D}_{\tilde{g}}(h) = \delta^*_{\tilde{g}} \big\{  \beta_{\tilde{g}}(h) -  \beta_{g}(h)  \big\},
\end{align}
and $\mathcal{R}_{\tilde{g}}$ is given by
\begin{align} \label{Rterm}
\mathcal{R}_{\tilde{g}}(h)_{jk} = -\tilde{R}^{a\,\,b}_{\,\,j\,\,k} h_{ab} + \frac{1}{2}(\tilde{R}^a_{\,\, k } h_{aj} + \tilde{R}^a_{\,\, j}h_{ak}),
\end{align}
in terms of the curvature of $\tilde{g}$.   Notice that if $\tilde{g} = g$ (or more generally, if $\tilde{g}-g$ is sufficiently small) then the linearized operator
is elliptic.

\begin{rmk}  Although it will slightly complicate the argument in certain parts, overall it is much easier to work with the Bianchi operator
 with respect to the metric $g_{\epsilon}$ (instead of $g_{\epsilon}'$) when defining the gauge-fixing term.  As we will see below (Lemma \ref{prop:GrahamLee}), the boundary condition will also be defined in terms of $g_{\epsilon}$.  \end{rmk}


With this notation we can now reformulate our goal: to find a solution of
\begin{align}  \label{eq:gaugefixed}
\mathcal{N}_{g_{\epsilon}',g_{\epsilon}}[h] = 0.
\end{align}
In contrast to (\ref{modgoal}) the linearized operator $L_{g_{\epsilon}',g_{\epsilon}}$ is now elliptic, since $g_{\epsilon}' - g_{\epsilon}$ is small when $\epsilon > 0$ is small.
Unfortunately, it is not necessarily surjective, so we need
to allow additional variations of the metric $g_\epsilon'$.  We
therefore consider the following modification of \eqref{eq:gaugefixed}:
\begin{equation} \label{eq:gaugefixed2}
(r,h) \mapsto \mathcal{N}_{g_{\epsilon}' + r,g_{\epsilon}}[h] = \mathrm{Ric}(g_\epsilon' + r + h) + n(g_\epsilon' + r + h) +
\delta^*_{g_\epsilon'+r+h} \beta_{g_\epsilon}(h),
\end{equation}
where $r$ will be chosen in a suitable finite-dimensional
space to compensate for the lack of surjectivity of $L_{g_\epsilon', g_\epsilon}$.

We also need to verify that a zero of the mapping in (\ref{eq:gaugefixed2}) defines an Einstein metric.
The following result is a boundary-value version of Lemma 2.2 of
\cite{GL}), and as a byproduct it also specifies the boundary condition we will impose:

\begin{lem}\label{prop:GrahamLee}
  Suppose that $(r,h)$ is a zero of the mapping in \eqref{eq:gaugefixed2}
  with $r,h \in C^{2,\alpha}_1$ small enough so that $g_{+} = g_\epsilon' + h + r$ defines a Riemannian metric in $M \times [0, \infty)$.
  Assume  \\

  \noindent $(i)$  On the boundary $\{t=0\}$, we have
  \begin{align} \label{Bianchi}
  \beta_{g_\epsilon}(h) = 0.
  \end{align}

 \noindent $(ii)$ For some $K < 0$, $\mathrm{Ric}(g_{+}) \leq Kg_{+}.$  \\

  Then $\beta_{g_\epsilon}(h) = 0$ on
  $M\times [0,\infty)$, and hence (by (\ref{eq:gaugefixed2})) $g_{+}$ is a Poincar\'e-Einstein metric.
\end{lem}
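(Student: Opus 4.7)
The plan is to turn the hypothesis into a linear equation for the $1$-form $\omega := \beta_{g_\epsilon}(h)$ and then force $\omega \equiv 0$ by a Bochner maximum-principle argument using (i), (ii), and the decay of $h$ at infinity. Once $\omega \equiv 0$ is established, the equation $\mathcal{N}_{g_\epsilon'+r,\,g_\epsilon}[h] = 0$ collapses to $\mathrm{Ric}(g_+) + n g_+ = 0$.

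Writing $g_+ = g_\epsilon' + r + h$, the hypothesis reads
\[
\mathrm{Ric}(g_+) + n g_+ + \delta^*_{g_+}\omega = 0.
\]
I would apply the Bianchi operator $\beta_{g_+}$ of $g_+$: since $\beta_g(g) \equiv 0$ is trivial and the contracted second Bianchi identity gives $\beta_g(\mathrm{Ric}(g)) \equiv 0$, we obtain $\beta_{g_+}(\delta^*_{g_+}\omega) = 0$. A standard Weitzenb\"ock-type computation on $1$-forms yields
\[
2\,\beta_g \delta^*_g \omega = -\Delta_g \omega - \mathrm{Ric}(g)(\omega,\,\cdot\,),
\]
so $\omega$ satisfies $\Delta_{g_+}\omega = -\mathrm{Ric}(g_+)(\omega,\,\cdot\,)$. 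The Bochner formula then gives
\[
\tfrac{1}{2}\Delta_{g_+}|\omega|^2_{g_+} = |\nabla\omega|^2 - \mathrm{Ric}(g_+)(\omega, \omega),
\]
and hypothesis (ii) forces the right-hand side to be at least $|\nabla\omega|^2 - K|\omega|^2 > 0$ wherever $\omega \neq 0$. Hence $|\omega|^2$ is a non-negative $C^2$ function, strictly subharmonic with respect to $g_+$ at every point where it is positive.

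To close the argument, (i) gives $|\omega|^2 = 0$ on $\{t=0\}$, and the fact that $h \in C^{2,\alpha}_1$ together with the bounded geometry of $g_\epsilon$ gives $\omega \in C^{1,\alpha}_1$; in particular $|\omega|^2_{g_+} \to 0$ uniformly on $M$ as $t \to \infty$. Applying the weak maximum principle on each compact slab $M \times [0,T]$,
\[
\sup_{M\times[0,T]}|\omega|^2 \;\leq\; \sup_{M\times\{T\}}|\omega|^2,
\]
and letting $T \to \infty$ forces $\omega \equiv 0$, which substituted back into the displayed equation yields the Einstein condition. The main care is in the Weitzenb\"ock identity: one must verify the Ricci lower-order term enters with the sign that turns (ii) into \emph{strict} subharmonicity rather than the opposite. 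Once that identity is in hand, the argument closely parallels Lemma 2.2 of \cite{GL}, with the Dirichlet-type boundary condition (i) on $\{t=0\}$ playing the role of their asymptotic control at the conformal infinity.
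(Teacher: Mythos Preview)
Your proof is correct and follows the same approach as the paper's: apply $\beta_{g_+}$ to the gauge-fixed equation to obtain $\beta_{g_+}\delta^*_{g_+}\omega=0$, derive the differential inequality $\Delta_{g_+}|\omega|^2 \geq -cK|\omega|^2$ via the Weitzenb\"ock/Bochner identity, and conclude $\omega\equiv 0$ from the boundary condition and the decay at infinity via the maximum principle. You have simply spelled out the intermediate identities that the paper defers to \cite{GL}.
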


\begin{proof}
  We let $\omega = \beta_{g_\epsilon}(h)$. Applying the Bianchi identity to
  \eqref{eq:gaugefixed2}, we obtain
  \[ \beta_{g_{+}}(\delta_{g_{+}}^{*}\omega) = 0. \]
  As in \cite{GL}, this implies
  \[ \Delta_{g_{+}} |\omega|^2 \geq -K |\omega|^2. \]
  Since $\omega=0$ on the boundary $\{t=0\}$, and $\omega\to 0$ as
  $t\to\infty$, the maximum principle implies that $\omega=0$
  everywhere.
\end{proof}

We are thus led to studying the linearization of the mapping in
\eqref{eq:gaugefixed2}, subject to the boundary condition
$\beta_{g_\epsilon}(h)|_{t=0} =0$.  Using (\ref{Lform}), the linearization of (\ref{eq:gaugefixed2}) is given by
\[ \begin{aligned}
    \overline{L}_{g_\epsilon', g_{\epsilon}}: E \times (C^{2,\alpha}_1)_\beta &\to
    C^{0,\alpha}_1 \\
    (r,h)  &\mapsto (D\mathrm{Ric}_{g_\epsilon'} + n)r +
    L_{g_\epsilon', g_{\epsilon}}(h),
\end{aligned} \]
where $E$ is a
certain finite dimensional subspace of $C^{2,\alpha}_1$, to be
determined later, and $(C^{2,\alpha}_1)_\beta$ denotes the space of
symmetric two tensors $h\in C^{2,\alpha}_1$ satisfying the boundary condition $\beta_{g_\epsilon}(h)|_{t=0} =0$.

 Most of our work in the paper will be
constructing a right inverse for this linearized operator, leading to
the following, proved in Section~\ref{sec:invertingLbar}.

\begin{thm} \label{thm:Lbarinverse}
  Let $\epsilon, \alpha > 0$ be sufficiently small.
  If the metric $g_M$ is chosen generically in its conformal class,
  then for a suitable finite dimensional subspace $E$ the linearized
  operator $\overline{L}_{g_\epsilon'}$ has a right inverse
  $\mathcal{R}$, satisfying $\Vert \mathcal{R} \Vert \leq
  C\epsilon^{-2-\alpha}$ for a constant $C$ independent of $\epsilon$.
\end{thm}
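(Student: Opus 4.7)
The plan is to build $\mathcal{R}$ by inverting the linearized operator on the model space $\mathbf{H}_+^{n+1} = [0,\infty)_t \times \mathbf{R}^n_x$ via Fourier transform, and then assembling localized copies of this model inverse through a partition of unity on $M$. On $\mathbf{H}_+^{n+1}$ the metric $g_{\mathbf{H}} = dt^2 + e^{2t} dx^2$ is translation-invariant in $x$, so the constant-coefficient operator $L_{g_{\mathbf{H}}, g_{\mathbf{H}}}$ is block-diagonalized by the Fourier transform in $x$: for each $\xi \in \mathbf{R}^n$ the equation $L h = f$ reduces to an ODE system in $t$ for the components of $\widehat h(t,\xi)$ decomposed according to $T\mathbf{H}_+ = \mathbf{R}\partial_t \oplus \mathbf{R}^n$. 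I would solve this system subject to $\beta_{g_{\mathbf{H}}}(\widehat h)|_{t=0} = 0$ and exponential decay $\widehat h \in e^{-t} C^{2,\alpha}$ at infinity. For $|\xi|$ bounded below an indicial-root analysis at $t=\infty$ gives unique solvability with $|\xi|$-uniform bounds; the low-frequency modes (in particular $\xi = 0$) produce a finite-dimensional cokernel, corresponding to slowly-decaying tensor fields that neither satisfy the boundary condition nor the decay constraint.

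To transplant this inverse to $M\times[0,\infty)$, cover $(M, g_M)$ by geodesic balls $B(p_\alpha, \epsilon)$ with bounded overlap, with a subordinate partition of unity $\{\chi_\alpha\}$. In rescaled normal coordinates $x' = \epsilon^{-1}x$ one has $\epsilon^{-2}g_M = (dx')^2 + O(\epsilon^2|x'|^2)$ on $|x'|\leq 1$, hence $g_\epsilon$ is $C^k$-close to $g_{\mathbf{H}}$ on the corresponding half-cylinder. Given $f\in C^{0,\alpha}_1(S^2)$, decompose $f = \sum_\alpha \chi_\alpha f$, transfer each piece to $\mathbf{H}_+^{n+1}$ via the chart, apply the model inverse, and sum the pullbacks to get a right parametrix $P$ with $L_{g_\epsilon',g_\epsilon}\circ P = I - K$. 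The error $K$ splits into commutators $[L,\chi_\alpha]$ (each derivative of $\chi_\alpha$ costs $\epsilon^{-1}$) and the perturbation $L_{g_\epsilon'} - L_{g_{\mathbf{H}}}$ (gaining $\epsilon^2$ on a unit ball after rescaling). A careful accounting—keeping track also of the $\epsilon^{-\alpha}$ loss from Hölder interpolation—yields $\|K\|\leq 1/2$ while $\|P\| \leq C\epsilon^{-2-\alpha}$, and the desired inverse is $\mathcal{R} = P(I-K)^{-1}$ defined via the Neumann series.

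The remaining ingredient is the subspace $E$. The local cokernels from the model step glue to a finite-dimensional cokernel of $L_{g_\epsilon', g_\epsilon}$, parametrized by low-frequency tensor modes on $(M,g_M)$. I would choose $E \subset C^{2,\alpha}_1$ so that $(D\mathrm{Ric}_{g_\epsilon'} + n)(E)$ spans a complement of $\mathrm{Im}(P)$; this is possible provided the obstructions depend transversally on the conformal representative, which holds generically. The main obstacle is the analysis of the low-frequency regime of the model ODE: isolating the indicial roots, writing the obstruction explicitly, and verifying that a generic choice of $g_M$ kills the residual cokernel (via the absence of nontrivial TT deformations or conformal Killing-type modes of the relevant linear operator). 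Sharp bookkeeping of the $\epsilon^{-2-\alpha}$ bound through the gluing is also delicate, since it depends on the precise rate at which the model inverse degenerates as $\xi\to 0$ and on the interplay between the $\epsilon^2$ smallness of the perturbation and the $\epsilon^{-1}$ cost of localizing to balls of $g_M$-diameter $\epsilon$.
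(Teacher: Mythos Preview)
Your overall architecture---Fourier analysis on the model, gluing via a partition of unity, a finite-dimensional $E$ to handle the cokernel---matches the paper, but two of the three steps have genuine gaps as written.

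\textbf{The gluing error estimate does not close.} You localize $f$ to unit balls in $\epsilon^{-2}g_M$, apply the model inverse, and estimate the error as ``commutators $[L,\chi_\alpha]$ cost $\epsilon^{-1}$, perturbation gains $\epsilon^2$.'' But the model inverse $P_{\mathbf{H}}(\chi_\alpha f)$ is \emph{not} supported in the same unit ball; it spreads across all of $\mathbf{R}^n$. If you cut it off at the unit scale the commutator error is $O(1)$, not small. What makes the argument work in the paper is the observation that, \emph{provided} $\chi_\alpha f$ satisfies a specific orthogonality condition $I(\chi_\alpha f)=0$ (where $I(u)_j=\int_0^\infty u_{j0}e^{-2t}\,dt$), the local solution decays at the rate $|x|^{-n-1+\delta}$ in the spatial directions. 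One then cuts off at an intermediate radius $R_\epsilon=\epsilon^{-2/3}$ and balances: the cutoff error contributes $R_\epsilon^{n-1}\cdot R_\epsilon^{-n-1+\delta}$ (number of overlapping annuli times decay), while the perturbation error is $\epsilon^2$ times a sum $\sum_{k\le R_\epsilon} k^{n-1}k^{-n+1+\delta}$. Both are $O(\epsilon^{4/3-\delta'})$, which is what gives an approximate inverse. Without the decay statement tied to $I(u)=0$ and the intermediate scale, the Neumann-series step fails.

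\textbf{The cokernel is not finite-dimensional in the way you describe.} The obstruction coming from the $\xi\to 0$ singularity of the model inverse is precisely the map $I$ above, whose image is the \emph{infinite-dimensional} space $\Omega^1(M)$, not a finite set of ``low-frequency tensor modes.'' The paper handles this by first removing the $I(u)$-component globally: it defines $\mathcal{T}(\omega)=I\circ L_{g_\epsilon',g_\epsilon}(\epsilon^{-2}e^{-nt}\omega\odot dt)$, shows $\mathcal{T}$ is $\epsilon^{1-\alpha}$-close to an elliptic operator $\mathcal{T}_0$ on $\Omega^1(M)$, and only \emph{then} gets a finite-dimensional cokernel. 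The space $E$ is chosen concretely as tensors $r=\epsilon^{-2}e^{-nt}h+fe^{-nt}\,dt\odot dt$ with $\mathrm{tr}_{g_M}h=0$, for which $I\circ(D\mathrm{Ric}+n)r=c_1\delta_{g_M}h+c_2\,df$; surjectivity of $\delta_{g_M}$ is equivalent to $g_M$ having no Killing fields, which is the meaning of ``generic.'' Your phrase ``obstructions depend transversally on the conformal representative'' is pointing in the right direction but does not capture this mechanism.
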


Using this result together with Lemma~\ref{lem:errorestimate}, a
standard contraction mapping argument can be used to solve
Equation~\ref{eq:gaugefixed2}, as follows. Let us define the operator
$\mathcal{Q}$ by
\begin{equation}\label{eq:Qdefn}
\begin{aligned}
 \mathrm{Ric}(g_\epsilon'+r+h) + n(g_\epsilon'+r+h)
    &+ \delta^*_{g_\epsilon'+r+h} \beta_{g_\epsilon}(h)\\ &= \mathrm{Ric}(g_\epsilon') + ng_\epsilon' +
  \overline{L}_{g_\epsilon', g_{\epsilon}}(r,h) + \mathcal{Q}(r,h),
\end{aligned}
\end{equation}
and define $\mathcal{F}$ by
\[ \begin{aligned}
    \mathcal{F} : E \times (C^{2,\alpha}_1)_\beta &\to E\times (C^{2,\alpha}_1)_\beta
    \\
    (r, h) &\mapsto -\mathcal{R}\Big[ \mathrm{Ric}(g_\epsilon') +
    ng_\epsilon' + \mathcal{Q}(r,h)\Big].
\end{aligned} \]
A fixed point of $\mathcal{F}$ then necessarily satisfies
Equation~\eqref{eq:gaugefixed2}.

Define the set
\[ \mathcal{U} = \{(r,h)\in E \times (C^{2,\alpha}_1)_\beta\,:\,
  \Vert(r,h)\Vert \leq \epsilon^3 \}, \]
using the norm
\[ \Vert (r,h)\Vert = \Vert r \Vert_{C^{2,\alpha}_1} + \Vert
  h\Vert_{C^{2,\alpha}_1}. \]
\begin{prop}
  For sufficiently small $\epsilon, \alpha$
  the map $\mathcal{F}$ defines a contraction $\mathcal{F} :
  \mathcal{U} \to \mathcal{U}$, and so it has a fixed point.
\end{prop}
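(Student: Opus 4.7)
The plan is to apply the Banach fixed-point theorem to $\mathcal{F}$ on the closed set $\mathcal{U}$. This requires two estimates: that $\mathcal{F}$ maps $\mathcal{U}$ into itself, and that $\mathcal{F}$ is a contraction on $\mathcal{U}$ with Lipschitz constant at most $\tfrac12$. Both will follow by combining three ingredients already at hand: the bound $\Vert\mathcal{R}\Vert \le C\epsilon^{-2-\alpha}$ from Theorem~\ref{thm:Lbarinverse}; the approximate-solution estimate $\Vert\mathrm{Ric}(g_\epsilon') + ng_\epsilon'\Vert_{C^{0,\alpha}_1} = O(\epsilon^6)$ from Lemma~\ref{lem:errorestimate}; and a quadratic bound on the nonlinear remainder $\mathcal{Q}$ defined by \eqref{eq:Qdefn}. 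The fact that $\mathcal{R}$ takes values in $E\times (C^{2,\alpha}_1)_\beta$ ensures that $\mathcal{F}$ preserves the boundary condition $\beta_{g_\epsilon}(h)|_{t=0} = 0$ automatically.

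The quadratic bound I want to establish is
\[ \Vert\mathcal{Q}(r,h)\Vert_{C^{0,\alpha}_1} \le C\Vert(r,h)\Vert^2, \]
\[ \Vert\mathcal{Q}(r_1,h_1) - \mathcal{Q}(r_2,h_2)\Vert_{C^{0,\alpha}_1} \le C\bigl(\Vert(r_1,h_1)\Vert + \Vert(r_2,h_2)\Vert\bigr)\Vert(r_1,h_1) - (r_2,h_2)\Vert, \]
with $C$ independent of $\epsilon$. By construction $\mathcal{Q}(r,h)$ is the Taylor remainder of order $\ge 2$ for the smooth map $(r,h)\mapsto \mathrm{Ric}(g_\epsilon'+r+h) + n(g_\epsilon'+r+h) + \delta^*_{g_\epsilon'+r+h}\beta_{g_\epsilon}(h)$ expanded at $(0,0)$, and $g_\epsilon'$ is close to the hyperbolic model in $C^{k,\alpha}$ uniformly in $\epsilon$, so the coefficients in its Taylor expansion have $\epsilon$-independent bounds. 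The estimate is helped by the weight: a product of two $C^{2,\alpha}_1$ tensors lies in $C^{0,\alpha}_2$, which embeds into $C^{0,\alpha}_1$ (with operator norm $\le 1$) since $e^{-t} \le 1$.

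Granting this, the self-map property follows: for $(r,h)\in\mathcal{U}$,
\[ \Vert \mathcal{F}(r,h)\Vert \le C\epsilon^{-2-\alpha}\bigl(\epsilon^6 + \Vert(r,h)\Vert^2\bigr) \le C\epsilon^{-2-\alpha}\cdot\epsilon^6 = C\epsilon^{4-\alpha} \le \epsilon^3 \]
once $\alpha<1$ and $\epsilon$ is small. For the contraction property, the term $\mathrm{Ric}(g_\epsilon')+ng_\epsilon'$ and the linear image $\overline{L}_{g_\epsilon',g_\epsilon}(r,h)$ are annihilated in the difference (the latter because $\mathcal{R}$ is a right inverse and $\mathcal{Q}$ absorbs the overshoot), leaving
\[ \Vert\mathcal{F}(r_1,h_1) - \mathcal{F}(r_2,h_2)\Vert \le C\epsilon^{-2-\alpha}\cdot 2\epsilon^3\cdot\Vert(r_1,h_1) - (r_2,h_2)\Vert = C\epsilon^{1-\alpha}\Vert(r_1,h_1) - (r_2,h_2)\Vert, \]
which is below $\tfrac12$ for small $\alpha, \epsilon$. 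The Banach fixed-point theorem then produces the required fixed point.

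The main technical obstacle is the quadratic estimate on $\mathcal{Q}$ in weighted norms. The gauge-fixing term $\delta^*_{g_\epsilon'+r+h}\beta_{g_\epsilon}(h)$ couples derivatives of $(r+h)$ with derivatives of $h$, and one must confirm that $\delta^*_{g_\epsilon'+r+h} - \delta^*_{g_\epsilon'}$, viewed as a power series in $(r+h)$ with coefficients built from Christoffel symbols, acts on the weighted Hölder spaces with operator norms that stay bounded uniformly in $\epsilon$ as $r+h$ ranges over a small ball. This reduces to the fact that the geometry of $g_\epsilon'$ is uniformly bounded on the scale dictated by the weight $\delta=1$, which is the whole point of working with the rescaled model.
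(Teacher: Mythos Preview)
Your proposal is correct and follows essentially the same approach as the paper: both combine the bound on $\mathcal{R}$, the $O(\epsilon^6)$ error estimate for $g_\epsilon'$, and a quadratic/Lipschitz estimate on the remainder $\mathcal{Q}$ (obtained by Taylor expansion, with constants uniform in $\epsilon$ because the geometry of $g_\epsilon'$ is uniformly controlled) to run the Banach fixed-point argument on $\mathcal{U}$. The only cosmetic difference is that the paper first proves the contraction estimate and then derives the self-map property via $\Vert\mathcal{F}(r,h)\Vert \le \Vert\mathcal{F}(r,h)-\mathcal{F}(0,0)\Vert + \Vert\mathcal{F}(0,0)\Vert$, whereas you estimate $\Vert\mathcal{F}(r,h)\Vert$ directly from the quadratic bound; the arithmetic is the same either way.
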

\begin{proof}
First note that by differentiating Equation~\eqref{eq:Qdefn} with
respect to $g_\epsilon'$ and applying the mean value theorem (or
alternatively expanding $\mathcal{Q}$ as a power series), we find that
as long as $\Vert(r_1, h_1)\Vert, \Vert(r_2,h_2)\Vert < \kappa < c_0$
for a fixed constant $c_0$, we have
\[ \Vert \mathcal{Q}(r_1,h_1) -
  \mathcal{Q}(r_2,h_2)\Vert_{C^{0,\alpha}_1} \leq C\kappa \Vert
  (r_2-r_1, h_2-h_1)\Vert. \]
Using our bound for the right inverse $\mathcal{R}$, it follows that
as long as $(r_i,h_i)\in \mathcal{U}$, and $\epsilon$ is sufficiently
small, we have
\[ \Vert \mathcal{F}(r_1,h_1) - \mathcal{F}(r_2,h_2)\Vert \leq
  C\epsilon^{1-\alpha} \Vert (r_1-r_2, h_1-h_2)\Vert, \]
and so $\mathcal{F}$ is a contraction.

Finally to check that $\mathcal{F}(\mathcal{U}) \subset \mathcal{U}$
we let $(r,h)\in \mathcal{U}$. Then
\[ \begin{aligned} \Vert \mathcal{F}(r,h)\Vert &\leq \Vert
    \mathcal{F}(r,h) - \mathcal{F}(0,0)\Vert + \Vert
    \mathcal{F}(0,0)\Vert \\
    &\leq C\epsilon^{1-\alpha}\Vert (r,h)\Vert + C\epsilon^{4-\alpha} \\
    &\leq \epsilon^3
\end{aligned} \]
for sufficiently small $\epsilon$. Here we used that by
Lemma~\ref{lem:errorestimate} and the bound for $\mathcal{R}$ we have
$\Vert \mathcal{F}(0,0)\Vert \leq C\epsilon^{-2-\alpha}\epsilon^6$.
\end{proof}

The existence of a fixed point of $\mathcal{F}$ together with
Proposition~\ref{prop:GrahamLee} then completes the proof of
Theorem~\ref{thm:mainthm}. In the remainder of this section we give a
brief outline of the proof of Theorem~\ref{thm:Lbarinverse}.

The first step, in Section~\ref{SecHyp} is to carefully analyze the linearzed operator
$L_{\mathbf{H}} = L_{g_{\mathbf{H}}, g_{\mathbf{H}}}$ in the model case when $g_{\mathbf{H}}$ is the
hyperbolic metric, i.e. $M=\mathbf{R}^n$ and $g_M$ is the Euclidean
metric. The main result here is Theorem~\ref{thm:Hinverse} below,
which roughly speaking says the following: given a 2-tensor $u$ supported inside
the unit ball in the spatial direction, and satisfying an additional
``orthogonality condition'' $I(u)=0$, we can solve
$L_{g_{\mathbf{H}}}(h)=u$ with $h$ satisfying the Bianchi boundary
condition (with respect to $g_{\mathbf{H}}$), such that $h$ is localized in the sense that it has good
decay in the spatial directions. Here $I(u)$ is a one-form on $M$, see
\eqref{Idef} for its definition.

To illustrate this, consider the following simple analogous result. Let
$\Delta$ be the Laplacian on the product space
$\mathbf{R}^n \times X$ for a compact Riemannian manifold $X$, and let
$u$ be a function supported in $B_1\times X$. We can then construct a
solution of $\Delta h = u$ with $h$ decaying at
the rate of the Green's function $r^{2-n}$ for large $r$ in the
$\mathbf{R}^n$ direction. If, however, we impose the additional condition that
$u$ is orthogonal to the constants in each fiber $\{t\} \times X$,
then we can find a solution $h$ decaying exponentially fast.

The next step is to globalize this result to the case when $M$ is a
compact manifold. The idea is that when $\epsilon$ is sufficiently
small, then locally $(M,\epsilon^{-2} g_M)$ is well approximated by
Euclidean space. We can then solve the equation $L_{g_{\epsilon}',g_{\epsilon}}h=u$
on $M\times [0,\infty)$ as long as $u$ satisfies the orthogonality
condition $I(u)=0$, by chopping $u$ up into pieces supported in
approximately Euclidean balls, and combining the ``local'' inverses
constructed in the model space. The decay of the corresponding local
solutions ensures that we get a good estimate for the error obtained
from combining these local solutions. We need some additional steps to
ensure that after this cutting and pasting procedure we can still
impose the Bianchi condition.

It remains to deal with the case when $I(u)\ne 0$. Since $I(u)$ is a
one-form on $M$, we are able to reduce this to inverting a suitable
linear operator on $M$. More precisely, we consider the operator
\[ \begin{aligned} \mathcal{T} : C^{2,\alpha}(\Omega^1(M)) &\to
  C^{0,\alpha}(\Omega^1(M)) \\
  \omega &\mapsto I\circ L_{g_{\epsilon}',g_{\epsilon}}( \epsilon^{-2} e^{-nt}
  \omega\odot dt)
\end{aligned} \]
It turns out that $\mathcal{T}$, which depends on $\epsilon$, converges to an elliptic
operator $\mathcal{T}_0$ as $\epsilon\to 0$, but $\mathcal{T}_0$ is
not necessarily surjective. It is this issue that we overcome by
incorporating an additional finite dimensional space $E$ of symmetric
2-tensors on $M \times [0,\infty)$ in the problem, and instead we consider the operator
\[ \begin{aligned} \overline{\mathcal{T}} : E \times
    C^{2,\alpha}(\Omega^1(M)) &\to C^{0,\alpha}(\Omega^1(M)) \\
    (r, \omega) &\mapsto I\Big[ (D\mathrm{Ric}_{g_\epsilon'} + n)r + L_{g_{\epsilon}',g_{\epsilon}}(\epsilon^{-2}e^{-nt}
    \omega\odot dt)\Big]
\end{aligned} \]
Although this operator is not elliptic in $r$, we only need a finite
dimensional space $E$ since the cokernel of $\mathcal{T}$ is finite
dimensional. It turns out that as long as $g_M$ admits no Killing
vector fields, we can choose a finite dimensional space $E$ such that
$\overline{\mathcal{T}}$ is surjective. This is then enough to
construct the right inverse required in
Theorem~\ref{thm:Lbarinverse}.

\section{The linearized operator on Hyperbolic space}  \label{SecHyp}

In this section we study the linearized operator $L_{g_{\mathbf{H}}} = L_{g_{\mathbf{H}},g_{\mathbf{H}}}$ in \eqref{Lform}
on hyperbolic space $\mathbf{H}^{n+1}$ with the hyperbolic metric
$g_{\mathbf{H}}$. A standard calculation gives
\begin{align} \label{LH}
L_{g_{\mathbf{H}}} h = - \frac{1}{2}\Delta_{g_{\mathbf{H}}} h - h + (\mathrm{tr}_{g_{\mathbf{H}}} h)g_{\mathbf{H}}.
\end{align}

A basic result (see \cite{LeeMemoirs}, Theorem 5.9) is the following:

\begin{thm} \label{LeeThm} On hyperbolic space $\mathbf{H}^{n+1}$, the linearized
  operator $L= L_{g_\mathbf{H}}$ at the hyperbolic metric is an isomorphism $L :
  C^{k,\alpha}_\delta \to C^{k-2,\alpha}_\delta$, as long as $|\delta
  - n/2| < n/2$.  In particular, this
  holds for our choice of weight.
\end{thm}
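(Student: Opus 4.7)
The result is Theorem 5.9 of \cite{LeeMemoirs}, so in practice we simply invoke it; as a proof plan we sketch the strategy. The overall approach is to view $L = -\frac{1}{2}\Delta_{g_{\mathbf{H}}} - \mathrm{Id} + \mathrm{tr}_{g_{\mathbf{H}}}(\cdot)\, g_{\mathbf{H}}$ as a uniformly degenerate elliptic operator on the compactification of $\mathbf{H}^{n+1}$, to which the general Fredholm theory for such operators on asymptotically hyperbolic manifolds applies (cf.\ Mazzeo--Melrose; in Lee's formulation this proceeds by a direct parametrix construction exploiting the isometry group of $g_{\mathbf{H}}$).

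The analysis has two pieces. First, one computes the indicial family of $L$ at the boundary at infinity. Using the upper half-space model $g_{\mathbf{H}} = y^{-2}(dy^2 + dx^2)$ and decomposing symmetric $2$-tensors with respect to the splitting $TX = \mathbf{R}\partial_y \oplus T\mathbf{R}^n$, the indicial roots of $L$ are determined by a finite collection of algebraic equations whose solutions all lie in $\{\delta \leq 0\}\cup\{\delta\geq n\}$. Consequently, for every $\delta$ with $|\delta - n/2|<n/2$ the operator $L : C^{k,\alpha}_\delta\to C^{k-2,\alpha}_\delta$ is Fredholm of index $0$: the index is constant as $\delta$ varies within the strip because no indicial root is crossed, and it equals zero at the self-dual weight $n/2$ by formal self-adjointness of $L$.

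Second, one shows that the kernel of $L$ is trivial for some---hence, by the index computation, every---weight in the strip. The natural place to do this is $\delta = n/2$, i.e., in $L^2(g_{\mathbf{H}})$. Pairing $Lh$ with $h$, integrating by parts, and applying the Weitzenb\"ock formula for symmetric $2$-tensors on a space of constant sectional curvature $-1$, one rewrites $\langle Lh,h\rangle_{L^2}$ as a sum of nonnegative quadratic expressions in $h$ and $\nabla h$, with equality forcing $h \equiv 0$; the Einstein normalization $\mathrm{Ric}(g_{\mathbf{H}}) = -n g_{\mathbf{H}}$ is exactly what makes these terms conspire into positivity. Combining triviality of the kernel with Fredholmness and self-adjointness then yields the stated isomorphism.

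The principal obstacle is this positivity step: one must use the trace term in $L$ together with the explicit hyperbolic curvature to cancel the apparently indefinite $-|h|^2$ contribution coming from the first-order part of $L$. Everything else is a formal application of the general Fredholm machinery for uniformly degenerate elliptic operators on asymptotically hyperbolic manifolds.
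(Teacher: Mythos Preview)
The paper does not prove this theorem; it simply states the result and cites \cite[Theorem 5.9]{LeeMemoirs}. Your proposal correctly identifies the same citation and then supplies a reasonable outline of the argument behind Lee's theorem (indicial roots, Fredholm index zero by self-adjointness at the $L^2$ weight, and triviality of the $L^2$ kernel via a positivity/Weitzenb\"ock computation), which is more than the paper itself provides.
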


The main technical result we will need is a variant Theorem \ref{LeeThm} solving a boundary value problem.  As above let
$\mathbf{H}^{n+1}_+ = \mathbf{R}^n \times[0,\infty)$, a subset of
hyperbolic space equipped with the hyperbolic metric
\[ g_{\mathbf{H}} = dt^2 + e^{2t} (dx^i)^2. \]
We will sometimes write $x^0 = t$.   Indices $i,j,k,l,\ldots$ run from 1 to $n$, while indices
$a,b,c,\ldots$ run from 0 to $n$.

For a symmetric $2$-tensor $u$ on $\mathbf{H}^{n+1}_+$ define the
one-form $I : T \mathbf{R}^n \rightarrow \mathbf{R}$ on $\mathbf{R}^n$ by
\begin{align} \label{Idef}
I(u)(V) = \int_0^\infty u(V, \partial_t) e^{-2t}\, dt
\end{align}
where $V \in T \mathbf{R}^n$.  More generally, given a manifold $M$ and a symmetric $2$-tensor $u \in C^{0,\alpha}_{\delta}(M \times [0, \infty))$, then (\ref{Idef}) defines a one-form
$I(u)$ on $M$ as long as $\delta > -2$.

\begin{thm}\label{thm:Hinverse}
  Suppose that $u \in C^{0,\alpha}_1$
  is a symmetric two-tensor on $\mathbf{H}^{n+1}_+$ supported in $B_1 \times
  [0,\infty)$, with $I(u)=0$. Then there exists a symmetric two-tensor
  $h \in C^{2,\alpha}_1$ on
  $\mathbf{H}^{n+1}_+$ satisfying
  \begin{enumerate}
    \item $Lh = u$, and $\Vert h\Vert_{C^{2,\alpha}_1}\leq C\Vert
      u\Vert_{C^{0,\alpha}_1}$ for a uniform constant $C$.
    \item $\beta_{g_{\mathbf{H}}}(h) = 0$ along the boundary
      $\{t=0\}$,
    \item For any $\delta\in (0,1)$, $h$ decays in the $x^i$ directions, at a rate of at least
      $|x|^{-n-1+\delta}$. More precisely, let $A_{R-1,R} = (B_R\setminus
      B_{R-1}) \times [0,\infty)$. We have
      \[ \Vert h\Vert_{C^{2,\alpha}_1(A_{R-1,R})} \leq C
      R^{-n-1+\delta} \Vert u\Vert_{C^{0,\alpha}_1} \]
      for all $R > 1$,
      for a uniform constant $C$.
  \end{enumerate}
   We define the linear operator $P_{\mathbf{H}}$ by setting
   $P_{\mathbf{H}}(u) = h$.
\end{thm}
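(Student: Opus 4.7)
The metric $g_{\mathbf{H}} = dt^2 + e^{2t}\sum_i (dx^i)^2$ is translation-invariant in $x$, so $L = L_{g_{\mathbf{H}}}$ and $\beta_{g_{\mathbf{H}}}$ have coefficients depending only on $t$. Taking the Fourier transform in the $x$ variable, the equation $Lh = u$ becomes, for each $\xi \in \mathbf{R}^n$, a linear ODE system $L_\xi \hat h(\xi, \cdot) = \hat u(\xi, \cdot)$ on $[0, \infty)$, with polynomial dependence on $\xi$. The Bianchi boundary condition $\beta_{g_{\mathbf{H}}}(h)|_{t=0} = 0$ becomes an $(n+1)$-dimensional vector condition at $t = 0$, while the weight $\delta = 1$ corresponds to solutions with $\hat h(\xi, t) = O(e^{-t})$ as $t \to \infty$. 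Theorem~\ref{LeeThm} applied on the full hyperbolic space produces, after Fourier transform, a particular solution $\hat h_p(\xi, \cdot)$ lying in the decaying subspace of $L_\xi$; to enforce the Bianchi condition I add a homogeneous solution $\hat h_0(\xi, \cdot)$ also drawn from the decaying subspace of $L_\xi$, with $\beta_{g_{\mathbf{H}}}(\hat h_0)|_{t=0}$ prescribed to cancel that of $\hat h_p$. For $\xi \neq 0$, a direct ODE analysis shows that the boundary-to-Bianchi map from this decaying subspace to $\mathbf{C}^{n+1}$ is an isomorphism, with inverse uniformly bounded on any region $|\xi| \geq c > 0$.

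\textbf{The low-frequency obstruction and the role of $I(u)=0$.} The main obstacle is the degeneration at $\xi = 0$. There the decaying subspace of $L_0$ acquires translation-invariant zero modes of the form $h_V = V \odot dt$ with $V$ a constant covector on $\mathbf{R}^n$; these lie in $C^{2, \alpha}_1$ and are annihilated by $L$, but carry non-trivial Bianchi trace at $t = 0$. Consequently the boundary-to-Bianchi map develops an $(n+1)$-dimensional cokernel as $\xi \to 0$. An integration-by-parts pairing of the equation $L \hat h = \hat u$ against the modes $h_V$ on the half-line (using the hyperbolic volume form and $L h_V = 0$) identifies the obstruction to solvability at $\xi = 0$ as precisely the Fourier transform of $I(u)$ at $\xi = 0$, so the hypothesis $I(u) \equiv 0$ kills it identically. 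The hardest step is then a uniform low-frequency analysis producing a right inverse whose operator norm remains bounded as $\xi \to 0$ once one restricts to the kernel of the obstruction; this requires carefully tracking the $|\xi|$-factor structure of both $\hat u$ and the zero-mode components near the origin to ensure that the solution extends continuously across $\xi = 0$ with the desired estimates.

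\textbf{Spatial decay and H\"older bounds.} Because $u$ is supported in $B_1 \times [0, \infty)$, the partial Fourier transform $\hat u(\xi, t)$ is real-analytic in $\xi$ with uniform bounds on all derivatives. Combined with the uniform invertibility established above and the extra $|\xi|$-factors near the origin supplied by $I(u) = 0$, the solution $\hat h(\xi, \cdot)$ inherits at least $n + 1$ continuous $\xi$-derivatives with integrable control. Integrating by parts $n + 1$ times in the inverse Fourier transform $h(x, t) = (2\pi)^{-n} \int e^{i x \cdot \xi} \hat h(\xi, t)\, d\xi$ then yields the spatial decay $|h(x, t)| \lesssim |x|^{-n-1+\delta}$ on the annuli $A_{R-1, R}$, the $\delta$-loss reflecting the H\"older rather than $L^\infty$ nature of the estimate. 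The full $C^{2, \alpha}_1$ bound in (1) and the Bianchi boundary condition (2) then follow by standard interior and boundary Schauder theory applied to this construction, since (2) holds by design.
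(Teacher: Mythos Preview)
Your overall architecture is the paper's: Fourier transform in $x$, solve the inhomogeneous equation via Theorem~\ref{LeeThm}, then correct the Bianchi boundary data by a decaying homogeneous solution of $L_\xi H=0$, with the $\xi\to 0$ degeneration controlled by $I(u)=0$. But the details at $\xi=0$ are off in ways that matter. The tensors $h_V=V\odot dt$ are not in $\ker L$ (from \eqref{Lhomog} one gets $(Lh_V)_{j0}=-2nV_j$); the actual decaying solutions at $\xi=0$ with nontrivial $H_{i0}$ component are the Type~IV solutions $H_{i0}=b_ie^{-(n+1)t}$, and these have \emph{zero} tangential Bianchi trace, so they are not the source of the obstruction. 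The cokernel of the boundary-to-Bianchi map at $\xi=0$ is $n$-dimensional (only the $dt$-component can be hit, via the Type~III solution), not $(n+1)$-dimensional. And $I(u)=0$ is used pointwise in $x$: via Lemma~\ref{lem:ODEintegral} it forces both $\hat\eta_j(0)=0$ \emph{and} the skew part of $\partial_{\xi_i}\hat\eta_j(0)$ to vanish, yielding $\hat\eta_i(\xi)=A_{ij}\xi_j+O(|\xi|^2)$ with $A$ \emph{symmetric}.

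The genuine gap is your decay argument. You assert that $\hat h(\xi,\cdot)$ has $n+1$ continuous $\xi$-derivatives, so that integrating by parts $n+1$ times in the inverse transform yields $|x|^{-n-1+\delta}$ decay. This fails: the homogeneous solutions with $B_\xi(H^a)|_{t=0}=\mathbf{e}_a$ have the form $H^a(\xi,0)=|\xi|^{-2}\Phi^a(\xi)$ with $\Phi^a$ smooth and $\Phi^a(0)=0$ (Proposition~\ref{prop:Fouriersolutions}), a genuine $|\xi|^{-1}$-type singularity. Even after multiplying by $\hat\eta_a$ vanishing to second order, the factor $\hat\eta_a(\xi)/|\xi|^2$ is degree-zero homogeneous near the origin and typically not even $C^1$ there; you certainly do not get $n+1$ derivatives for $n\geq 2$. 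The paper does not attempt this. Instead it first makes a further correction $\tilde h$ (built from the solutions \eqref{eq:solnsmallxi}, exploiting that $A_{ij}$ is symmetric) so that all first moments $\int x_i\eta_a\,dx$ vanish as well, and then uses the \emph{explicit} singularity: writing $\mathcal{F}^{-1}(|\xi|^{-2})=c|x|^{2-n}$, the inverse transform of the low-frequency piece becomes a convolution $N\ast|x|^{2-n}$ where $N$ has three vanishing moments (two from \eqref{eq:inteta}, one from $\Phi^a(0)=0$), and Taylor-expanding $|x-y|^{2-n}$ to third order gives the $|x|^{-n-1+\delta}$ decay (Proposition~\ref{prop:Hinverse2} and Lemma~\ref{lem:convolutiondecay}). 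This moment-counting and convolution step is the heart of the decay estimate and is missing from your sketch.
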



The rest of this section will be devoted to the proof of Theorem \ref{thm:Hinverse}.  Since it is rather involved, we begin with a sketch.

Given a symmetric $2$-tensor $u$ as in the statement of the theorem, the first step is to construct a solution $h_0$ of
\begin{align} \label{h0}
L h_0 = u
\end{align}
on $[0,\infty) \times M$, using the Green's function of $L$.  Note
that this solution will not in general satisfy the Bianchi condition
$\beta_{g_{\mathbf{H}}}h_0 = 0$ on the boundary $\{ t = 0 \}$.
Therefore, we need to `correct' our solution by solving the homogeneous boundary-value problem
\begin{align} \label{homog}
\begin{cases}
L h_1 = 0\ \mbox{in } \mathbf{H}_{+}^{n+1}, \\
\beta_{g_{\mathbf{H}}}(h_1) = \beta_{g_{\mathbf{H}}}(h_0)\ \mbox{on } \partial \mathbf{H}_{+}^{n+1} = \{ t = 0 \} \times \mathbf{R}^n,
\end{cases}
\end{align}
where $h_0$ solves (\ref{h0}).  Then taking $h = h_0 - h_1$, we arrive
at a solution of the original problem. We will solve the homogeneous
problem using the Fourier transform, and analyzing the resulting
ODEs. The required decay in Theorem~\ref{thm:Hinverse} will be
obtained by controlling the singularity of the Fourier transform at
the origin, and the orthogonality condition $I(u)=0$ is used to ensure
that the terms with the worst singularity vanish, thereby improving
the decay of the solution.

\subsection{The Fourier transform of the homogeneous problem}
We begin by writing down explicit formulas for the components of $L h$
for a symmetric 2-tensor $h$
with respect to the coordinates $x^i, t$. We will write $x^0=t$, and
use the convention that indices $i,j,k,\ldots$ run from 1 to $n$,
while $a,b,c,\ldots$ run from 0 to $n$.

\begin{lem} \label{CSym} With respect to the basis $\{ \partial_{x^0},
  \ldots, \partial_{x^n}\}$, the only nonzero Christoffel symbols are
\begin{align*}
 \Gamma^0_{jk} &= -e^{2t}\delta_{jk} ,\\
 \Gamma^i_{0k} &= \Gamma^i_{k0} = \delta^i_k,
\end{align*}
where $\nabla$ is the Riemannian connection.

More generally, if $(M,g_M)$ is a Riemannian manifold, $g = dt^2 + e^{2t} g_M$ is a warped product metric, and  $\{ x^i \}$ are local coordinates on $M$, then the only non-zero Christoffel symbols
with respect to the coordinate system $\{ x^1, \dots, x^n, x^0 = t \}$ on $M \times [0, \infty)$ are
 \begin{align*}
 \Gamma^m_{jk} &= (\Gamma_M)^m_{jk}, \\
 \Gamma^0_{jk} &= -e^{2t} ( g_M )_{jk} ,\\
 \Gamma^i_{0k} &= \Gamma^i_{k0} = \delta^i_k,
\end{align*}
where $\Gamma_M$ are the Christoffel symbols with respect to $g_M$.
\end{lem}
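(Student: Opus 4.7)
The plan is to derive the Christoffel symbols directly from the Koszul formula
\[
\Gamma^c_{ab} = \tfrac{1}{2} g^{cd}\bigl(\partial_a g_{bd} + \partial_b g_{ad} - \partial_d g_{ab}\bigr),
\]
exploiting the block structure of the warped product. The hyperbolic statement is the special case $g_M = \delta_{ij}$, so it suffices to handle the general warped product $g = dt^2 + e^{2t} g_M$, and read off the hyperbolic formulas at the end.

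First I would record the components: $g_{00}=1$, $g_{0i}=0$, $g_{ij}=e^{2t}(g_M)_{ij}$, with inverse $g^{00}=1$, $g^{0i}=0$, $g^{ij}=e^{-2t}(g_M)^{ij}$. The only $t$-dependence sits in the spatial block, so $\partial_0 g_{ij} = 2 e^{2t}(g_M)_{ij}$ while $\partial_0 g_{00} = \partial_0 g_{0i} = 0$, and the spatial derivatives $\partial_k$ act only on $(g_M)_{ij}$, never touching the time components.

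I would then walk through the six cases $(c;ab) = (0;00), (0;0k), (0;jk), (i;00), (i;0k), (i;jk)$. In the cases with two or three zero indices, every term in the Koszul formula differentiates $g_{00}=1$ or $g_{0i}=0$ and hence vanishes. For $\Gamma^0_{jk}$, only $-\partial_0 g_{jk}$ survives, yielding $-e^{2t}(g_M)_{jk}$. For $\Gamma^i_{0k}$, only $\partial_0 g_{km}$ survives in the Koszul sum over $m$, producing $\tfrac{1}{2} e^{-2t}(g_M)^{im}\cdot 2 e^{2t}(g_M)_{km} = \delta^i_k$. For $\Gamma^m_{jk}$, the time component drops out and the conformal factors $e^{\pm 2t}$ from $g_{jk}$ and $g^{ml}$ cancel, leaving exactly the Koszul expression on $(M,g_M)$, i.e.\ $(\Gamma_M)^m_{jk}$.

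There is no real obstacle here; this lemma is a routine verification. The only bookkeeping item is tracking when the $e^{2t}$ factor gets differentiated (only by $\partial_0$, never by $\partial_k$), since this is what forces some Koszul terms to survive and others to vanish. The hyperbolic formulas then follow by specializing $(g_M)_{ij}=\delta_{ij}$ and $(\Gamma_M)^m_{jk}=0$.
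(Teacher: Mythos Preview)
Your proposal is correct and is exactly the routine Koszul computation the paper has in mind; the paper itself omits the proof, noting only that it is a straightforward calculation. Your case-by-case verification is complete and accurate, with the hyperbolic statement following as the special case $(g_M)_{ij}=\delta_{ij}$.
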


This is a straightforward calculation, and we will omit the proof.  Using these formulas, we have the following identities for the components of the covariant derivatives of a symmetric two-tensor:
\[ \begin{aligned} \label{DH}
\nabla_i h_{jk} &= \partial_i h_{jk} + e^{2t} \delta_{ij} h_{0k} +
e^{2t} \delta_{ik} h_{0j} \\
\nabla_0 h_{jk} &= \partial_0 h_{jk} - 2h_{jk} \\
\nabla_i h_{j0} &= \partial_i h_{j0} + e^{2t} \delta_{ij} h_{00} -
h_{ij}\\
\nabla_0 h_{j0} &= \partial_0 h_{j0} - h_{j0}\\
\nabla_i h_{00} &= \partial_i h_{00} - 2h_{i0}\\
\nabla_0 h_{00} &= \partial_0 h_{00}.
\end{aligned} \]
Using these formulas we can compute the Bianchi operator:
\[ \beta(h)_a = g^{bc}\nabla_b h_{ac} - \frac{1}{2} \nabla_a
(g^{bc}h_{bc}). \]
Its components are
\begin{align} \label{Bianh} \begin{split}
  \beta(h)_i &= e^{-2t} \partial_j h_{ij} + \partial_0 h_{i0} +
  n h_{i0} - \frac{1}{2} e^{-2t} \partial_i h_{kk} -
  \frac{1}{2} \partial_i h_{00} \\
  \beta(h)_0 &= e^{-2t} \partial_i h_{i0} + n h_{00} +
  \frac{1}{2}\partial_0 h_{00} - \frac{1}{2} e^{-2t} \partial_0
  h_{ii}.
  \end{split}
\end{align}
We can also take another covariant derivative and compute the components of the rough laplacian acting on symmetric $2$-tensors $\Delta  = g_{\mathbf{H}}^{ab} \nabla_a \nabla_b$:
\begin{align*}
\Delta h_{jk} &= e^{-2t} \partial_i\partial_i h_{jk} + \partial_t^2 h_{jk} +
(n-4)\partial_t h_{jk} + (2-2n) h_{jk} \\
&\quad + 2 e^{2t} h_{00} \delta_{jk} + 2(\partial_j h_{0k} + \partial_k
h_{0j}), \\
\Delta h_{j0} &= e^{-2t} \partial_i\partial_i h_{j0} + \partial_t^2 h_{j0} +
(n-2) \partial_t h_{j0} - 2(n + 1) h_{j0} \\
&\quad + 2\partial_j h_{00} - 2e^{-2t} \partial_i h_{ij}\\
\Delta h_{00} &= e^{-2t} \partial_i\partial_i h_{00} + \partial_t^2 h_{00} + n \partial_t
h_{00} - 2n h_{00}\\
&\quad - 4e^{-2t}\partial_i h_{0i}  + 2 e^{-2t} h_{kk}.\\
\end{align*}

Combining the above, we can write the equation $Lh = u$ as a system of
equations in the components of $h$ and $u$:
\begin{align} \label{Lhomog} \begin{split}
u_{jk} &= e^{-2t} \partial_i\partial_i h_{jk} + \partial_t^2 h_{jk} +
(n-4)\partial_t h_{jk} + (4-2n) h_{jk} \\
&\quad - 2h_{ii} \delta_{jk} + 2(\partial_j h_{0k} + \partial_k
h_{0j}) \\
u_{j0} &= e^{-2t} \partial_i\partial_i h_{j0} + \partial_t^2 h_{j0} +
(n-2) \partial_t h_{j0} - 2nh_{j0} \\
&\quad + 2\partial_j h_{00} - 2e^{-2t} \partial_i h_{ij} \\
u_{00} &= e^{-2t} \partial_i\partial_i h_{00} + \partial_t^2 h_{00} + n \partial_t
h_{00} - 2n h_{00} \\
&\quad - 4e^{-2t}\partial_i h_{0i}.
\end{split}
\end{align}


In the following, we will use upper-case letters to denote the Fourier
transforms of components of $h$, scaled by
additional powers of $e^{-t}$. This amounts to writing our tensor $h$ in
terms of an orthonormal frame, and it leads to an ODE system which is
easier to analyze. With this in mind we define
\begin{equation}  \label{FT}
\begin{aligned}
H_{ij}(t,\xi) &= e^{-2t} \widehat{h_{ij}}(t, \xi) = e^{-2t} \int_{\mathbf{R}^n} e^{- \sqrt{-1} \xi \cdot
  \mathbf{x}} h_{ij}(t,\mathbf{x}) d\mathbf{x}, \\
H_{i0}(t,\xi) &= e^{-t} \widehat{h_{i0}}(t,\xi) = e^{-t} \int_{\mathbf{R}^n} e^{- \sqrt{-1} \xi \cdot
  \mathbf{x}} h_{i0}(t,\mathbf{x}) d\mathbf{x}, \\
H_{00}(t,\xi) &= \widehat{h_{00}}(t,\xi) =  \int_{\mathbf{R}^n} e^{- \sqrt{-1} \xi \cdot
  \mathbf{x}} h_{00}(t,\mathbf{x}) d\mathbf{x},
\end{aligned}
\end{equation}
and similarly we will write $U_{ij} = e^{-2t}\widehat{u_{ij}}$, etc.
After applying the Fourier transform to the system (\ref{Lhomog}), we obtain the following system of ODEs:
\begin{align} \label{ODEh} \begin{split}
 \Big[H_{jk}'' + nH_{jk}' - 2\delta_{jk} H_{pp}\Big] - 2\sqrt{-1} e^{-t} ( \xi_j H_{0k} + \xi_k H_{0j} )
 - e^{-2t} |\xi|^2 H_{jk} &= U_{jk} \\
 \Big[H_{j0}'' +nH_{j0}' - (n+1) H_{j0}\Big] -2e^{-t}\sqrt{-1}(\xi_j H_{00} - \xi_i
H_{ij})- e^{-2t} |\xi|^2 H_{j0} &= U_{j0}\\
 \Big[H_{00}'' + n H_{00}' - 2nH_{00}\Big] + 4\sqrt{-1} e^{-t} \xi_i H_{0i}
- e^{-2t} |\xi|^2 H_{00} &= U_{00},
\end{split}
\end{align}
and we are for now interested in the case when $U=0$.

Applying the Fourier transform to the components of the Bianchi operator in (\ref{Bianh}) gives
\begin{align} \label{BianH} \begin{split}
B_\xi(H)_i &= e^{-t}\widehat{\beta(h)_i} =  H_{i0}' + (n+1)H_{i0}
-e^{-t} \sqrt{-1}\xi_j H_{ij} \\
&\qquad\qquad\qquad + \frac{1}{2} e^{-t} \sqrt{-1}\xi_i H_{pp} +
\frac{1}{2} e^{-t} \sqrt{-1} \xi_iH_{00} \\
B_\xi(H)_0 &=  \widehat{\beta(h)_0} = n H_{00} + \frac{1}{2}H_{00}' -
  \frac{1}{2}H_{pp}' - H_{pp} - e^{-t} \sqrt{-1} \xi_i H_{i0}.
  \end{split}
\end{align}

\subsection{Solutions for small $\xi$}
We will assume that $|\xi|$ is small, and find solutions of the system
of ODEs as perturbations of solutions to the simpler system when
$\xi=0$, as a power series in $\xi, \bar\xi$. Let us write the ODEs
\eqref{ODEh} with $U=0$ as $L_\xi(H)=0$. If we write
\begin{equation}\label{eq:Hseries}
 H(\xi, t) = H(0, t) + \xi_i \partial_{\xi_i} H(0,t) +
  \bar\xi_i \partial_{\bar\xi_i} H(0,t) + \ldots,
\end{equation}
then we can obtain equations satisfied by $H(0,t)$ and $\partial_{\xi_i}
H(0,t)$ by differentiating the equation $L_\xi(H)=0$ and setting
$\xi=0$. In particular, $H(0,t)$ satisfies
\begin{align} \label{ODEh2} \begin{split}
 H_{jk}'' + nH_{jk}' - 2\delta_{jk} H_{pp} &= 0 \\
 H_{j0}'' +nH_{j0}' - (n+1) H_{j0}  &= 0\\
 H_{00}'' + n H_{00}' - 2nH_{00} &= 0.
\end{split}
\end{align}
$\overline{\partial}_{\xi_i} H(0,t)$ also satisfies the
same equations, while $\partial_{\xi_i} H(0,t)$ satisfies
\begin{align} \label{ODEh3} \begin{split}
 \Big[\partial_{\xi_l} H_{jk}'' + n\partial_{\xi_l} H_{jk}' -
 2\delta_{jk} \partial_{\xi_l} H_{pp}\Big] - 2\sqrt{-1} e^{-t} ( \delta_{lj}
 H_{0k} + \delta_{lk} H_{0j} ) &= 0 \\
 \Big[\partial_{\xi_l} H_{j0}'' +n \partial_{\xi_l} H_{j0}' - (n+1) \partial_{\xi_l} H_{j0}\Big] -2e^{-t}\sqrt{-1}(\delta_{lj} H_{00} - \delta_{li}
H_{ij}) &= 0\\
 \Big[\partial_{\xi_l} H_{00}'' + n \partial_{\xi_l} H_{00}' -
 2n \partial_{\xi_l} H_{00}\Big] + 4\sqrt{-1} e^{-t} \delta_{li}
 H_{0i} &= 0.
\end{split}
\end{align}
For higher order derivatives $\partial_{\xi}^k$, we will have a system
that we write schematically as
\begin{equation}\label{eq:partialkH}
 L(\partial_{\xi}^k H) + e^{-t} \ast \partial_{\xi}^{k-1}H + e^{-2t}
  \ast \partial_{\xi}^{k-2}H = 0,
\end{equation}
where $L$ is the homogeneous 2nd order operator appearing in square
brackets above. The solutions of the system for
$H, \partial_{\xi}H$ that we write down below will all be of order
$e^{-nt}$ and $e^{-(n+1)t}$ respectively, or smaller.
Because of the additional factors of $e^{-t}$,
it follows that the inhomogeneous equations
\eqref{eq:partialkH} for $\partial_{\xi}^kH$ has a solution of order
$e^{-(n+k)t}$, or smaller, and so the solution $H$ given by the series
\eqref{eq:Hseries} satisfies $H = O(e^{-nt})$ as $t\to\infty$.

\subsubsection{Solutions of type I}
Let $a_{ij}$ be any trace free symmetric matrix. Define
\[ \begin{aligned}
         H_{ij} &= \sqrt{-1} a_{ij} e^{-nt}, \\
         H_{i0} &= 0, \\
         H_{00} &= 0.
\end{aligned} \]
This solves the equations \eqref{ODEh2}. We can set
$\partial_{\bar\xi_l} H=0$, and also in \eqref{ODEh3} only the
equations involving $\partial_{\xi_l} H_{j0}$ are inhomogeneous. So we
can let $\partial_{\xi_l} H_{ij},\partial_{\xi_l}
H_{00} = 0$, while $\partial_{\xi_l} H_{j0}$ satisfies
\[ \Big[\partial_{\xi_l} H_{j0}'' +n \partial_{\xi_l} H_{j0}' -
  (n+1) \partial_{\xi_l} H_{j0}\Big] - 2e^{-t} a_{lj} e^{-nt}
= 0. \]
A solution of this ODE is
\[ \partial_{\xi_l} H_{j0} = -\frac{2}{n+2} a_{lj} t e^{-(n+1)t}. \]
We can similarly obtain solutions of the equations obtained by
differentiating $L_\xi(H)=0$ more than once, and solve them
inductively. The inhomogeneous terms
in these equations will all be of order $|\xi|^2e^{-(n+2)t}$ or
smaller. It follows that we can find a solution $H^1$ of our system
\eqref{ODEh} such that
\begin{align} \label{H1} \begin{split}
         H^1_{ij} &= \sqrt{-1} a_{ij} e^{-nt} + O(|\xi|^2 e^{-(n+2)t}), \\
         H^1_{i0} &= b_{ij} \xi_j t e^{-(n+1)t} + O(|\xi|^2 e^{-(n+2)t}), \\
         H^1_{00} &= O(|\xi|^2 e^{-(n+2)t}).
         \end{split}
\end{align}

Let $B_{\xi}$ denote the Fourier transform of the Bianchi operator,
i.e., the operator appearing on the RHS of (\ref{BianH}).  Then
\begin{align} \label{BH1} \begin{split}
B_{\xi}(H^1)_i &= e^{-(n+1)t} \xi_j a_{ij} \left[ \frac{-2}{n+2}
  + 1\right] + O(|\xi|^2 e^{-(n+2)t}), \\
B_{\xi}(H^1)_0 &= O(|\xi|^2 te^{-(n+2)t}).
\end{split}
\end{align}
Evaluating at $t=0$ we have
\[ \begin{aligned}
  B_\xi(H^1)_i|_{t=0} &= \xi_j a_{ij} \frac{n}{n+2} + O(|\xi|^2) \\
  B_\xi(H^1)_0|_{t=0} &= O(|\xi|^2).
\end{aligned} \]

\subsubsection{Solutions of type II, III}
We now let
\[ \lambda = \frac{n+\sqrt{n^2 + 8n}}{2}, \]
and note that $n + 1 < \lambda < n+2$.  For constants $a,b$, let us set
\[ \begin{aligned}
  H_{jk} &= ae^{-\lambda t}\delta_{jk}, \\
  H_{j0} &= 0,\\
  H_{00} &= be^{-\lambda t}.
\end{aligned} \]
These give a solution of \eqref{ODEh2}. Again, from
\eqref{ODEh3} only the equations for $\partial_{\xi_l}H_{j0}$ have
a nonzero inhomogeneous term:
\[ \Big[\partial_{\xi_l} H_{j0}'' +n \partial_{\xi_l} H_{j0}' -
  (n+1) \partial_{\xi_l} H_{j0}\Big] - 2\sqrt{-1} e^{-t} (b-a)\delta_{lj}
  e^{-\lambda t}
= 0. \]
A solution of this equation is
\[ \partial_{\xi_l} H_{j0} = K(b-a) \delta_{lj} e^{-(\lambda+1)t}, \]
where
\[K = \frac{2\sqrt{-1}}{\lambda^2 + (2-n)\lambda - 2n}. \]
As before, it follows that we can find a solution $\tilde{H}$ of
\eqref{ODEh} satisfying
\begin{align} \label{H23} \begin{split}
  \tilde{H}_{jk} &= ae^{-\lambda t} \delta_{jk} + O(|\xi|^2
  e^{-(\lambda+2)t}), \\
  \tilde{H}_{j0} &= K(b-a) \xi_j e^{-(\lambda+1)t} + O(|\xi|^2
  e^{-(\lambda+2)t}), \\
  \tilde{H}_{00} &= be^{-\lambda t} + O(|\xi|^2 e^{-(\lambda+2)t}).
\end{split}
\end{align}
Substituting these into \eqref{BianH} we find
\[ \begin{aligned}
  B_\xi(\tilde{H})_i|_{t=0} &= \left[ \left( (\lambda-n)K +
        \frac{n-2}{2}\sqrt{-1}\right) a + \left( (n-\lambda)K +
        \frac{1}{2}\sqrt{-1}\right)b \right] \xi_i  + O(|\xi|^2), \\
  B_\xi(\tilde{H})_0|_{t=0} &= \left(n - \frac{\lambda}{2}\right)b -
  \left(n -\frac{\lambda n}{2}\right) a + O(|\xi|^2).
\end{aligned} \]
Choosing $a,b$ suitably, we obtain two different solutions, $H^2, H^3$
of \eqref{ODEh}, satisfying
\[ \begin{aligned}
  B_\xi(H^2)_i|_{t=0} &= \xi_i + O(|\xi|^2), \\
  B_\xi(H^2)_0|_{t=0} &= O(|\xi|^2),
\end{aligned} \]
and
\[ \begin{aligned}
  B_\xi(H^3)_i|_{t=0} &= O(|\xi|^2), \\
  B_\xi(H^3)_0|_{t=0} &= 1 + O(|\xi|^2).
\end{aligned} \]

\subsubsection{Solutions of type IV}
With the same choice of $\lambda$ as above, set
\begin{align} \label{H4} \begin{split}
  H_{jk} &= e^{-\lambda t}\delta_{jk}, \\
  H_{00} &= e^{-\lambda t},\\
  H_{i0} &= b_i e^{-(n+1)t},
\end{split}
\end{align}
for arbitrary $b_1,\ldots, b_n$. This tensor satisfies
\eqref{ODEh2} and as above, we can iteratively solve inhomogeneous
ODEs for $\partial_{\xi}^kH$ to find a solution $H^4$ of \eqref{ODEh},
sastisfying $H^4 = O(e^{-nt})$. We will not need to know the value of
the Bianchi operator for these solutions.

\begin{lem}  \label{ModelSolns}  The solutions of types I, II,
III and IV together form an $(n+1)(n+2)/2$-dimensional space of solutions of
$L_\xi(H)=0$.  Moreover, all of them decay at a rate of at least $e^{-nt}$ as
$n\to\infty$. \end{lem}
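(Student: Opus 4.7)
The argument proceeds in three steps: a dimension count, a verification of the $e^{-nt}$ decay, and a proof of linear independence. The one subtlety deserving early mention is that Type IV is an \emph{affine} family, not a linear one --- its $H_{jk}$ and $H_{00}$ leading coefficients are fixed at $1$ regardless of the vector $b=(b_1,\ldots,b_n)$ --- so the solution $H^4_0$ obtained by setting $b=0$ already coincides with the Type II/III ansatz with both constants equal to $1$. Thus Type IV contributes $n$ (not $n+1$) new dimensions to the span of the four families, corresponding to the differences $H^4_b - H^4_0$ whose only nonvanishing leading-order component at $\xi=0$ is $H_{i0}=b_i e^{-(n+1)t}$. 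Adding up: Type I contributes $\tfrac{n(n+1)}{2}-1$ (one for each trace-free symmetric $a_{ij}$), Types II and III contribute $2$, and Type IV contributes $n$, for a total of $\tfrac{(n+1)(n+2)}{2}$.

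For decay, the leading exponents appearing in \eqref{H1}, \eqref{H23}, \eqref{H4} are $-n$, $-\lambda$, and $-(n+1)$, with $\lambda=(n+\sqrt{n^2+8n})/2$. Since $\lambda > n+1 > n$ for $n\geq 3$, each leading term is $O(e^{-nt})$, and the single logarithmic factor appearing in the Type I $H_{i0}$ term, of the form $te^{-(n+1)t}$, is also $o(e^{-nt})$. The iterative construction of the higher-order perturbations described near \eqref{eq:partialkH} introduces only corrections of strictly faster decay, carrying additional powers $|\xi|^k e^{-kt}$.

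For linear independence, I would assume a vanishing combination
\[ H^1_a + \alpha H^2 + \beta H^3 + (H^4_b - H^4_0) = 0 \]
and evaluate at $\xi=0$. The trace-free part of the $H_{ij}$ component isolates $\sqrt{-1}\,a_{jk}e^{-nt}$ (Types II, III, and IV contribute only a pure-trace piece at order $e^{-\lambda t}$), forcing $a=0$. The $H_{i0}|_{\xi=0}$ component then receives contributions only from the Type IV difference, namely $b_i e^{-(n+1)t}$, forcing $b=0$. The remaining equation $\alpha H^2+\beta H^3=0$ is handled by applying the Bianchi operator $B_\xi|_{t=0}$ and using the first-order-in-$\xi$ values computed just above: the $i$-component gives $\alpha\xi_i = 0$ and the $0$-component gives $\beta=0$. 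The principal care required throughout is keeping track of Type IV's affine structure to avoid overcounting; once this is handled the remaining algebra is routine.
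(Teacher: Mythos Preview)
Your argument is correct and follows the same approach as the paper: a dimension count by type followed by linear independence read off from the leading-order terms at $\xi=0$. You are in fact more careful than the paper's own proof, which simply asserts that Type IV is ``obviously $n$-dimensional'' and that independence is ``clear from the leading terms''; your explicit handling of the affine nature of the Type IV family and the resulting overlap with the Type II/III ansatz at $b=0$ fills a gap the paper leaves implicit.
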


\begin{proof} To explain the dimension count: the solutions of type I are in one-to-one correspondence with
trace-free symmetric $n \times n$ matrices; hence the dimension of this space of solutions is $n(n+1)/2 - 1$.  The solutions of type
II and III depend on two different choices of the parameter $a$, hence there is a two-dimensional space of these kinds of solutions.  Finally,
the set of solutions of type IV is obviously $n$-dimensional, since we can choose the vector $(b_1,\dots,b_n)$ arbitrarily.  Summing, we have
$[ n(n+1)/2 - 1 ] + 2 + n = (n+1)(n+2)/2.$  It is clear from the leading terms in (\ref{H1}), (\ref{H23}), and (\ref{H4}) that this family of solutions is linearly independent.
\end{proof}

\subsubsection{Prescribing the boundary condition for small
  $\xi$}\label{subsec:smallxi}
We can now combine the solutions $H^1, H^2, H^3$ that we obtained
above, to find that for any symmetric matrix $a_{ij}$ (not necessarily
trace free), and constant $a$, there is a solution of $L_\xi(H) = 0$
satisfying
\begin{equation}\label{eq:solnsmallxi}
 \begin{aligned}
  B_\xi(H)_i|_{t=0} &= a_{ij} \xi_j + O(|\xi|^2) \\
  B_\xi(H)_0|_{t=0} &= a + O(|\xi|^2).
\end{aligned}
\end{equation}
This solution $H$ is a smooth function of $\xi$, $a_{ij}$,
$a$, and in addition $H = O(e^{-nt})$.

\begin{lem} \label{smallsolutions} For each $1 \leq a \leq n+1$ and
  $\xi\ne 0$, we can find a solution $H^a$ (with the same
decay properties) satisfying
\begin{align*} B_\xi(H^a)|_{t=0} = \mathbf{e}_a, \end{align*}
where $\mathbf{e}_a \in \mathbf{R}^{n+1}$ is a standard basis vector.
\end{lem}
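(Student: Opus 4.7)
The plan is to restrict the construction from \eqref{eq:solnsmallxi} to a suitable $(n+1)$-dimensional family of parameters so that the boundary Bianchi map becomes a small perturbation of the identity, and then invert by Neumann series.

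First observe that the construction preceding \eqref{eq:solnsmallxi} is linear in the parameters $(a_{ij}, a)$: the solutions of types I, II, III depend linearly on their input data, and the solution in \eqref{eq:solnsmallxi} is obtained as their linear combination $H = H[a_{ij}, a]$. Consequently the error terms $O(|\xi|^2)$ in \eqref{eq:solnsmallxi} are themselves linear in $(a_{ij}, a)$ and bounded by $C|\xi|^2(\|a_{ij}\| + |a|)$ for a constant $C$ independent of the parameters.

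Next, introduce an explicit right inverse for the surjection $a_{ij}\mapsto a_{ij}\xi_j$. For $c\in\mathbf{R}^n$ and $\xi \neq 0$ set
\[ \sigma_\xi(c)_{ij} = \frac{c_i\xi_j + c_j\xi_i}{|\xi|^2} - \frac{(c\cdot\xi)\xi_i\xi_j}{|\xi|^4}, \]
which is symmetric, satisfies $\sigma_\xi(c)_{ij}\xi_j = c_i$ exactly, and obeys $\|\sigma_\xi(c)\|\le C|c|/|\xi|$. Composing with \eqref{eq:solnsmallxi} produces a linear endomorphism
\[ T_\xi: \mathbf{R}^n\oplus \mathbf{R}\to \mathbf{R}^n\oplus\mathbf{R}, \quad (c,\mu)\mapsto B_\xi(H[\sigma_\xi(c),\mu])\big|_{t=0}. \]
By the two observations above, $T_\xi = I + R_\xi$ where $\|R_\xi\|\le C|\xi|^2\cdot|\xi|^{-1} = C|\xi|$, since the error terms scale linearly in the parameters and $\sigma_\xi$ contributes a factor $|\xi|^{-1}$.

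For $|\xi|$ sufficiently small $T_\xi$ is therefore invertible by Neumann series. Given any standard basis vector $\mathbf{e}_a\in\mathbf{R}^{n+1}$, set $(c,\mu) = T_\xi^{-1}\mathbf{e}_a$ and define $H^a = H[\sigma_\xi(c),\mu]$. Then $B_\xi(H^a)|_{t=0} = \mathbf{e}_a$, and $H^a$ inherits the decay $O(e^{-nt})$ from the construction in \eqref{eq:solnsmallxi}. The main subtlety, which does not affect the present statement but will matter in the later Fourier inversion, is that $\sigma_\xi$ (and hence the amplitude of $H^a$) grows like $|\xi|^{-1}$ as $|\xi|\to 0$. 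This singular behavior near the origin in Fourier space is precisely what the orthogonality condition $I(u) = 0$ in Theorem~\ref{thm:Hinverse} is designed to absorb, by killing the most singular contribution when the inverse Fourier transform is taken.
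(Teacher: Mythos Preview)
Your proof is correct and follows essentially the same strategy as the paper's: pick a right inverse for the surjection $a_{ij}\mapsto a_{ij}\xi_j$ of norm $O(|\xi|^{-1})$, feed it into the construction \eqref{eq:solnsmallxi} to obtain a boundary map of the form $I+O(|\xi|)$ on $\mathbf{R}^{n+1}$, and invert for small $|\xi|$. The paper does this by writing down explicit sparse matrices (e.g.\ $a_{1i}=|\xi|^{-2}\xi_i$, $a_{ii}=-|\xi|^{-2}\xi_1$) and then taking linear combinations, which is exactly your Neumann-series step in disguise; your packaging via the explicit formula for $\sigma_\xi$ and the observation that the $O(|\xi|^2)$ errors are linear in the parameters is a clean way to say the same thing.

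The one point where the two constructions differ in a way that matters downstream is the structure of the singularity at $\xi=0$. The paper's choice has the form $a_{ij}=|\xi|^{-2}\cdot(\text{linear polynomial in }\xi)$, which feeds directly into the representation $H^a_{bc}=|\xi|^{-2}\Phi^a_{bc}(\xi,t)$ with $\Phi^a_{bc}$ smooth claimed in \eqref{eq:Habc}. Your $\sigma_\xi$ contains the term $(c\cdot\xi)\xi_i\xi_j/|\xi|^4$, so after factoring out $|\xi|^{-2}$ the remaining function $(c\cdot\xi)\xi_i\xi_j/|\xi|^2$ is continuous and vanishes at $0$ but is not smooth there. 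This does not affect the present lemma, and as you note the growth is still $O(|\xi|^{-1})$; but if you want the precise smoothness statement used later, the paper's sparse choice (or any other right inverse that is $|\xi|^{-2}$ times a polynomial) is preferable.
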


\begin{proof}  Define $a_{ij}$ to be the symmetric matrix such that $a_{1i} =
|\xi|^{-2} \xi_i$
for all $i$, and $a_{ii} = - |\xi|^{-2} \xi_1$ for $i = 2, \ldots, n$,
and $a_{ij} = 0$ for the other entries. Also, let
$a=0$. The corresponding solution $H$ satisfies
\[ \begin{aligned}
  B_\xi(H)_1|_{t=0} &= 1 + O(|\xi|), \\
  B_\xi(H)_i|_{t=0} &= O(|\xi|), \text{ for }i > 1  \\
  B_\xi(H)_0|_{t=0} &= O(|\xi|^2).
\end{aligned} \]

We can repeat this construction replacing the index $1$ with any $j >
1$, and finally we can also set $a_{ij}=0, a=1$. In this way, for any
standard basis vector $\mathbf{e}_a \in \mathbf{R}^{n+1}$ we can obtain a
solution $\tilde{H}^a$ satisfying
\[ B_\xi({\tilde{H}}^a)|_{t=0} = \mathbf{e}_a + O(|\xi|). \]
For sufficiently small $\xi$, say $|\xi| < \kappa$,
we can then take linear combinations
\[ H^a = \lambda_a \tilde{H}^a + \sum_{b\ne a} \lambda_b
\tilde{H}^b, \]
where $\lambda_a = 1 + O(|\xi|)$ and $\lambda_b = O(|\xi|)$ for $b\ne
a$, and $H^a$ will satisfy
\[ B_\xi(H^a)|_{t=0} = \mathbf{e}_a. \]
\end{proof}

The key question for us is the nature of the singularity of these solutions
$H^a$ at $\xi=0$. From the preceding discussion we see that
the components of each $H^a$ have the form
\begin{equation} \label{eq:Habc}
 H^a_{bc} = |\xi|^{-2} \Phi^a_{bc}(\xi,t),
\end{equation}
where the $\Phi^a_{bc}$ are smooth functions of $\xi,t$ satisfying
$\Phi^a_{bc}(0,t)=0$ and $\Phi^a_{bc}(\xi, t) = O(e^{-nt})$.

\subsection{Solutions for large $\xi$}
Consider again the ODEs \eqref{ODEh}, satisfied by the Fourier
transform $H$ of a solution of $Lh=0$. We now study solutions of this
system for large $\xi$, with the aim of prescribing $\beta(h)$ at
$t=0$. The following simple observation shows that this is equivalent to
studying solutions of the system with $|\xi|=1$, but $t \to
-\infty$.
\begin{lem}\label{lem:translate}
  Suppose that $H(\xi, t)$ is a solution of the system
  \eqref{ODEh}. Then for any $T \in \mathbf{R}$ another solution is
  given by $\tilde{H}(\xi, t) = H(e^T \xi, t + T)$. In addition,
  applying the Fourier transform of the Bianchi operator, we have
  \[ B_\xi(\tilde{H})|_{t=0} = B_{e^T\xi}(H)|_{t=T}. \]
\end{lem}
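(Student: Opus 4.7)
The plan is to observe that, with $U=0$, the variables $\xi$ and $t$ enter the system \eqref{ODEh} only through the combinations $e^{-t}\xi_j$ and $e^{-2t}|\xi|^2$. This reflects the fact that $(t,x) \mapsto (t+T, e^{-T}x)$ is an isometry of $(\mathbf{H}^{n+1}, g_{\mathbf{H}})$, and after incorporating the frame-adapted rescaling \eqref{FT} of the Fourier transform, this symmetry becomes a clean algebraic invariance of the ODE system. The whole lemma is then a direct change-of-variables verification.

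First I would compute $t$-derivatives of $\tilde H(\xi,t) = H(e^T\xi, t+T)$: both $\tilde H'$ and $\tilde H''$ are just $H'$ and $H''$ evaluated at $(e^T\xi, t+T)$, since $T$ is a constant and $\xi$ is held fixed. Substituting $\tilde H$ into each of the three equations of \eqref{ODEh} at the point $(\xi,t)$ then produces precisely the same expression as the corresponding equation evaluated at $(e^T\xi, t+T)$ on $H$. Indeed, the cross-term coefficient $e^{-t}\xi_j$ matches $e^{-(t+T)}(e^T\xi)_j$, and the zeroth-order coefficient $e^{-2t}|\xi|^2$ matches $e^{-2(t+T)}|e^T\xi|^2$. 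Since $H$ satisfies the system at $(e^T\xi, t+T)$, so does $\tilde H$ at $(\xi,t)$.

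Next I apply the same algebraic identity to the Fourier-transformed Bianchi operator \eqref{BianH}. Evaluating $B_\xi(\tilde H)$ at $t=0$ gives a combination of $\tilde H, \tilde H'$ at $t=0$, which equals the corresponding combination of $H, H'$ at $(e^T\xi, T)$; the prefactors $e^{-t}\xi$ at $t=0$ become $\xi$, while the prefactors $e^{-T}(e^T\xi)$ defining $B_{e^T\xi}(H)|_{t=T}$ also collapse to $\xi$. Equality follows component-by-component.

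There is no real obstacle: both statements are scaling identities, and the only thing to track carefully is the interplay between the exponential weights $e^{-2t}$, $e^{-t}$, $1$ built into the definitions \eqref{FT} of $H_{ij}$, $H_{i0}$, $H_{00}$ and the factors $e^{-t}$, $e^{-2t}$ appearing in the coefficients of \eqref{ODEh} and \eqref{BianH}. Because the rescaling in \eqref{FT} is exactly the one that converts covariant derivatives into orthonormal-frame components for the warped metric $dt^2 + e^{2t}(dx^i)^2$, these factors are automatically compatible, and the dilation-translation isometry of hyperbolic space passes transparently to the Fourier-side system.
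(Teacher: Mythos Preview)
Your argument is correct: the lemma is indeed a direct change-of-variables verification based on the fact that the dependence of the system \eqref{ODEh} and the operator \eqref{BianH} on $\xi$ occurs only through the combinations $e^{-t}\xi_j$ and $e^{-2t}|\xi|^2$. The paper itself treats this as a ``simple observation'' and gives no proof, so your verification is exactly what the authors had in mind.
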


For $\xi$ with $|\xi|=1$, the system \eqref{ODEh} is of the form
\[ H'' + nH' + Q_0 H + e^{-t} Q_1(\xi) H - e^{-2t}  H = 0, \]
for suitable matrices $Q_0, Q_1$, where only $Q_1$ depends on $\xi$.
After a change of variables $s= e^{-t}$, we obtain
\[ s^2 \frac{d^2}{ds^2} H - (n-1)s\frac{d}{ds} H  + Q_0H + s Q_1(\xi) H -
s^2H = 0. \]
Writing $J = \frac{d}{ds}H$ we have the equivalent first order system
\[ \begin{aligned}
  \frac{d}{ds} H &= J,\\
  \frac{d}{ds} J &= H + (n-1)s^{-1}J - Q_1(\xi) s^{-1}H - Q_0s^{-2}H.
\end{aligned} \]
The leading coefficients are given by the matrix
\[ \begin{bmatrix} 0 & I \\ I & 0 \end{bmatrix}, \]
which has eigenvalues $1,-1$ with multiplicity $(n+1)(n+2)/2$ each.
The system has an irregular singularity of rank 1 as
$s\to\infty$, and so there will be $(n+1)(n+2)/2$ linearly independent
solutions which as $s\to\infty$ have leading order term $s^{-r} e^s$ for suitable
$r$, and $(n+1)(n+2)/2$ solutions which decay like $s^{-r} e^{-s}$.
We are interested in the solutions that blow up as $s\to\infty$, and
for these each component of $H$ has an asymptotic expansion
of the form
\begin{equation}\label{eq:Hasympt}
 H_{ab} \sim s^{-r} e^s(c_{ab} + c^{(1)}_{ab}s^{-1} + c^{(2)}_{ab}
s^{-2} + \ldots).
\end{equation}
If we substitute this asymptotic power series into our
system, then the leading terms are of order $s^{2-r} e^s$,
and these cancel in each equation. The vanishing of the next
order term, $s^{1-r}e^s$ gives rise to a system of linear
equations for the coefficients $\mathbf{c} =c_{ab}$:
\[ -2r \mathbf{c} - (n-1) \mathbf{c}  +
Q_1(\xi)\mathbf{c} = 0, \]
so $\mathbf{c}$ is an eigenvector of the matrix $Q_1(\xi)$, with
eigenvalue $2r + n-1$.

\begin{lem}
  The matrix $Q_1(\xi)$ is diagonalizable, with real eigenvalues.
\end{lem}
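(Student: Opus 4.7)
The plan is to exhibit $Q_1(\xi)$ as conjugate to a real symmetric matrix via a diagonal change of basis that absorbs the factors of $\sqrt{-1}$ introduced by the Fourier transform on the mixed components $H_{j0}$. First I would read off $Q_1(\xi)$ from the $e^{-t}$-coefficients in \eqref{ODEh}, namely
\[ (Q_1 H)_{jk} = -2\sqrt{-1}(\xi_j H_{0k}+\xi_k H_{0j}), \quad (Q_1 H)_{j0} = 2\sqrt{-1}(\xi_i H_{ij} - \xi_j H_{00}), \quad (Q_1 H)_{00} = 4\sqrt{-1}\,\xi_i H_{0i}. \]
Then, parametrizing symmetric $2$-tensors by real triples $(A, B, C)$ via $H_{jk} = A_{jk}$, $H_{j0} = \sqrt{-1}\, B_j$, $H_{00} = C$ --- equivalently, conjugating by the diagonal operator $D$ that is the identity on the $(jk)$ and $(00)$ blocks and multiplication by $\sqrt{-1}$ on the $(j0)$ block --- a direct computation shows that the $\sqrt{-1}$'s cancel in every component, giving the real operator
\[ \tilde{Q}_1(\xi)(A,B,C) = \bigl( 2(\xi_j B_k + \xi_k B_j),\ 2\xi_i A_{ij} - 2\xi_j C,\ -4\xi_i B_i\bigr). \]

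The next step is to verify that $\tilde{Q}_1(\xi)$ is self-adjoint with respect to the real inner product
\[ \langle (A,B,C),(A',B',C')\rangle = \sum_{j,k} A_{jk} A'_{jk} + 2\sum_j B_j B'_j + CC', \]
which is the restriction of the Hermitian form $\sum_{a,b} H_{ab}\overline{H_{ab}}$ to the real subspace cut out by $H_{j0}\in \sqrt{-1}\,\mathbf{R}$; the weight $2$ on the $B$-block records the fact that $H_{j0}$ and $H_{0j}$ both contribute to the sum. The check itself is a few lines of index bookkeeping: the $A$-$B$ cross-terms balance after relabelling dummy indices and using the symmetry of $A$ and $A'$, while the $B$-$C$ cross-terms cancel directly, and there are no $A$-$C$ cross-terms. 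Since a real self-adjoint operator on a finite-dimensional real inner product space is diagonalizable with real eigenvalues, the same is true of $\tilde{Q}_1(\xi)$, and hence of $Q_1(\xi) = D\tilde{Q}_1(\xi) D^{-1}$.

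I do not anticipate any genuine obstacle. The only delicate point is choosing the inner product with the correct weights --- in particular the factor of $2$ weighting the $B$-block; with the naive unweighted form the cross-terms fail to cancel and $\tilde{Q}_1(\xi)$ is not symmetric. Conceptually this identity is a finite-dimensional shadow of the formal self-adjointness of the linearized Einstein operator $L_{g_{\mathbf{H}}}$ on hyperbolic space, so its appearance here should not come as a surprise.
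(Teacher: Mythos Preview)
Your proof is correct and follows essentially the same approach as the paper: both arguments show that $Q_1(\xi)$ is self-adjoint with respect to the natural inner product $\sum_{a,b} H_{ab}\overline{H'_{ab}}$ on symmetric $2$-tensors. The only cosmetic difference is bookkeeping---the paper rescales the diagonal entries $H_{jj}, H_{00}$ by $1/\sqrt{2}$ and works with the standard Hermitian form on the resulting coordinates, whereas you conjugate away the $\sqrt{-1}$'s on the $(j0)$-block and use the equivalent weighted real inner product; your observation about the weight $2$ on the $B$-block is exactly what the paper's $1/\sqrt{2}$ rescaling of the diagonals accomplishes from the other side.
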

\begin{proof}
  This follows from the fact that $Q_1(\xi)$ is self adjoint in a
  suitable basis. More precisely, let us write $A_{jk} = H_{jk}$ for
  $j\ne k$, $A_{j0} = H_{j0}$, $A_{jj} = \frac{1}{\sqrt{2}} H_{jj}$,
  and $A_{00} = \frac{1}{\sqrt{2}} H_{00}$. In this basis we have
  \[ \begin{aligned} (Q_1(\xi)A)_{jk} &= -2\sqrt{-1}(\xi_j A_{k0} + \xi_k A_{j0}),
    \quad \text{ for }j\ne k \\
    (Q_1(\xi)A)_{j0} &= -2\sqrt{-2}\xi_j A_{00} + 2\sqrt{-1} \sum_{k\ne
      j} \xi_k A_{jk} + 2\sqrt{-2} \xi_j A_{jj} \\
    \frac{1}{\sqrt{2}}(Q_1(\xi)A)_{jj} &= -2\sqrt{-2}\xi_j A_{j0} \\
    \frac{1}{\sqrt{2}}(Q_1(\xi)A)_{00} &= 2\sqrt{-2} \sum_j \xi_j
    A_{j0},
    \end{aligned} \]
    so that $Q_1(\xi)$ is self adjoint.
\end{proof}

From this lemma we obtain that there are $(n+1)(n+2)/2$ linearly
independent solutions of our system with asymptotic expansion
\eqref{eq:Hasympt}, where the value of $r$ may depend on the
solution. The type I, II, III solutions that we found in
the previous subsections cannot decay as $s\to \infty$ by the maximum
principle. This can be viewed as an instance of the argument in the
proof of Lemma~\ref{prop:GrahamLee}, or more precisely its linearization around the
hyperbolic metric. To see this note that for any fixed $\xi$ these ODE
solutions define periodic elements $h$ in the kernel of $L_{\mathbf{H}}$
on $\mathbf{R}^n\times (-\infty, \infty)$. Letting $\omega =
\beta_{\mathbf{H}}(h)$ as in the proof of Lemma~\ref{prop:GrahamLee} we
find that $|\omega|$ cannot admit an interior maximum. But $\omega\to
0$ as $t\to\infty$, so $h$ cannot decay as $t\to -\infty$
(i.e. $s\to\infty$) as well. It follows that the type I, II or III
solutions have asymptotics of the form
\eqref{eq:Hasympt} as $s\to\infty$. Translating back to the
$t$-variable, the conclusion is the following.
\begin{prop}
  For any eigenvector $\mathbf{c}=c_{ab}$ of the matrix $Q_1(\xi)$ we
  obtain a solution $H$ of the system $L_\xi(H)=0$. As $t\to -\infty$
  each component $H_{ab}$ has asymptotic expansion
  \[ H_{ab}(t) \sim e^{rt} e^{e^{-t}} (c_{ab} + c^{(1)}_{ab} e^t +
  c^{(2)}_{ab} e^{2t} + \ldots), \]
  while as $t\to \infty$, we have $|H| = O(e^{-nt})$.
\end{prop}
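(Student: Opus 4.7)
The plan is to combine standard asymptotic analysis of an ODE system with an irregular singular point of rank one, together with a linearization of the maximum-principle argument from Lemma~\ref{prop:GrahamLee}.

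First, I would pass to the variable $s=e^{-t}$ and rewrite the second-order system $L_{\xi}(H)=0$ as a first-order system for $(H, dH/ds)$, so that the singularity at $s=\infty$ is of rank one with leading principal part having eigenvalues $\pm 1$, each of multiplicity $(n+1)(n+2)/2$. Standard asymptotic theory for such systems (Wasow) then produces $(n+1)(n+2)/2$ linearly independent genuine solutions asymptotic to formal series of the form $s^{-r}e^{s}\bigl(\mathbf{c}+\mathbf{c}^{(1)}s^{-1}+\cdots\bigr)$. Substituting this ansatz into the ODE and matching the $s^{1-r}e^{s}$ coefficients gives the eigenvalue equation $Q_{1}(\xi)\mathbf{c}=(2r+n-1)\mathbf{c}$ derived in the text, while the higher coefficients $\mathbf{c}^{(k)}$ are determined recursively by matching subsequent orders. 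Since $Q_{1}(\xi)$ is diagonalizable with real eigenvalues, every eigenvector arises as the leading coefficient of such an expansion, and one obtains a genuine solution $\widetilde{H}$ with the prescribed asymptotic behavior as $t\to-\infty$.

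To upgrade $\widetilde{H}$ to a solution that in addition satisfies $|H|=O(e^{-nt})$ as $t\to+\infty$, I would argue by a dimension count. Let $V$ be the full $(n+1)(n+2)$-dimensional solution space of $L_{\xi}(H)=0$, let $V^{+}\subset V$ denote the subspace of solutions satisfying $|H|=O(e^{-nt})$ as $t\to+\infty$, and let $V^{-}\subset V$ denote the subspace of solutions decaying as $t\to-\infty$. By Lemma~\ref{ModelSolns}, the type I--IV solutions lie in $V^{+}$ and span a subspace of dimension $(n+1)(n+2)/2$, so $V^{+}$ has exactly this dimension and is their span; similarly, by the first step, $\dim V^{-}=(n+1)(n+2)/2$. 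Granting the key claim $V^{+}\cap V^{-}=\{0\}$, we get $V=V^{+}\oplus V^{-}$, and we may decompose $\widetilde{H}=\widetilde{H}^{+}+\widetilde{H}^{-}$. Since $\widetilde{H}^{-}\in V^{-}$ is exponentially small as $t\to-\infty$ (it decays like $s^{-r}e^{-s}$), it does not affect the leading asymptotic expansion there, and so $H:=\widetilde{H}^{+}$ retains the expansion from the first step and additionally satisfies $|H|=O(e^{-nt})$ as $t\to+\infty$.

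The main obstacle is proving $V^{+}\cap V^{-}=\{0\}$, and this is where the maximum principle enters. Given $H\in V^{+}\cap V^{-}$, I would form the plane-wave tensor $h(t,x)=e^{\sqrt{-1}\,\xi\cdot x}H(t)$ (with the rescaling from \eqref{FT}); it is a complex-valued solution of $L_{g_{\mathbf{H}}}h=0$ on $\mathbf{R}^{n}\times\mathbf{R}$ that is bounded in $x$ and decays in $t$ at both ends uniformly in $x$. Setting $\omega=\beta_{g_{\mathbf{H}}}(h)$, the Bianchi-identity computation from Lemma~\ref{prop:GrahamLee}, linearized about the hyperbolic metric (for which $\mathrm{Ric}_{g_{\mathbf{H}}}=-n\, g_{\mathbf{H}}$), yields a subsolution inequality
\[ \Delta_{g_{\mathbf{H}}}|\omega|^{2}\geq -K|\omega|^{2} \qquad \text{with } K<0. \]
Since $|\omega|^{2}$ is $x$-independent and tends to $0$ as $t\to\pm\infty$, applying the maximum principle on the slabs $\{|t|<T\}$ and letting $T\to\infty$ forces $\omega\equiv 0$. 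A parallel subsolution argument for $|h|^{2}$, using the Lichnerowicz form of $(D\mathrm{Ric}_{g_{\mathbf{H}}}+n)$ acting on Bianchi-gauged tensors on the negatively curved space $\mathbf{H}^{n+1}$, then yields $h\equiv 0$ and hence $H\equiv 0$, completing the plan.
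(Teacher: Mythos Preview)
Your overall strategy parallels the paper's closely: both invoke the rank-one irregular singular point analysis at $s=e^{-t}\to\infty$ to produce the $(n+1)(n+2)/2$ solutions with asymptotics \eqref{eq:Hasympt}, and both appeal to the linearized Graham--Lee maximum-principle argument on $\omega=\beta_{g_{\mathbf{H}}}(h)$. The paper, however, runs the matching in the opposite direction: rather than starting from the growing solutions $\widetilde{H}$ and projecting onto $V^{+}$ via a splitting $V=V^{+}\oplus V^{-}$, it starts from the type I--IV solutions already known to lie in $V^{+}$ and argues that these cannot decay as $t\to-\infty$, hence must carry the expansion \eqref{eq:Hasympt}. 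This avoids ever needing the full statement $V^{+}\cap V^{-}=\{0\}$: the paper only uses that a nonzero element of $V^{+}$ has $\omega\not\equiv 0$ (so the maximum principle on $|\omega|$ bites), which it reads off from the explicit boundary values $B_{\xi}(H^{1}), B_{\xi}(H^{2}), B_{\xi}(H^{3})$ computed in the small-$\xi$ subsections.

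The genuine gap in your proposal is the last step, the ``parallel subsolution argument for $|h|^{2}$.'' Once you have $\omega\equiv 0$, you do get $\mathrm{tr}_{g_{\mathbf{H}}}h=0$ (since $\Delta(\mathrm{tr}\,h)=2n\,\mathrm{tr}\,h$ and a genuine subsolution inequality holds for $|\mathrm{tr}\,h|^{2}$), but the residual equation for the trace-free, divergence-free part is $\Delta_{g_{\mathbf{H}}}h=-2h$, which yields only
\[
\Delta_{g_{\mathbf{H}}}|h|^{2}=2|\nabla h|^{2}-4|h|^{2}\ \geq\ -4|h|^{2}.
\]
The sign is wrong for the maximum principle, and the refined Kato inequality for TT tensors does not close this gap either. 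So you cannot conclude $h\equiv 0$ by a pointwise differential inequality of this kind; to establish $V^{+}\cap V^{-}=\{0\}$ you would need a different mechanism, for instance the $L^{2}$ injectivity of the Fourier-reduced operator $L_{\xi}$ underlying Theorem~\ref{LeeThm}. The paper's route, by contrast, never has to kill a putative TT element of $V^{+}\cap V^{-}$ directly: it only needs the maximum principle on $|\omega|$, together with the knowledge that $\omega$ is not identically zero for the explicit $V^{+}$ solutions at hand.
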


Let us now look at the boundary condition. Substituting $H_{ab}$ into
\eqref{BianH}, the leading terms are
\begin{equation}\label{eq:B2} \begin{aligned}
 B_\xi(H)_i &=  \Big[- c_{i0} -
 \sqrt{-1}\xi_j c_{ij} + \frac{1}{2}
 \sqrt{-1}\xi_i c_{pp}+
\frac{1}{2} \sqrt{-1} \xi_i c_{00} \Big] e^{(r-1)t}e^{e^{-t}}  + O(e^{rt}e^{e^{-t}})\\
  B_\xi(H)_0 &=\Big[ -\frac{1}{2}c_{00} +
  \frac{1}{2}c_{pp} - \sqrt{-1} \xi_i c_{i0}\Big] e^{(r-1)t}e^{e^{-t}}
  + O(e^{rt}e^{e^{-t}}),
\end{aligned} \end{equation}
and more precisely $B_\xi(H)$ has an asymptotic expansion in powers of
$e^t$. Note that the leading coefficients do not depend on $r$. Let us write
\[ B_\xi(H) \sim e^{(r-1)t}e^{e^{-t}} \left( R(\xi)\mathbf{c} +
  R^{(1)}(\xi,r)\mathbf{c} e^t + \ldots \right), \]
where $R(\xi)$ is independent of $r$.
We have the following
\begin{lem}\label{lem:Rinvertible}
  The matrix $R(\xi)$ has a right inverse for all $\xi$ with $|\xi|=1$.
\end{lem}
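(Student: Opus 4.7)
The plan is to prove surjectivity of $R(\xi)$ by an explicit construction, using the formulas read off from \eqref{eq:B2}. For a symmetric 2-tensor $\mathbf{c} = c_{ab}$, these give
\begin{align*}
(R(\xi)\mathbf{c})_i &= -c_{i0} - \sqrt{-1}\xi_j c_{ij} + \tfrac{\sqrt{-1}}{2}\xi_i(c_{pp}+c_{00}), \\
(R(\xi)\mathbf{c})_0 &= \tfrac{1}{2}(c_{pp}-c_{00}) - \sqrt{-1}\xi_i c_{i0}.
\end{align*}
Given a target $v = (v_1,\ldots,v_n,v_0) \in \mathbb{C}^{n+1}$, the goal is to exhibit a symmetric $\mathbf{c}$ depending linearly on $v$ with $R(\xi)\mathbf{c} = v$; this linear map is then the required right inverse.

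Using $|\xi|=1$, I would decompose $v_i = \lambda\xi_i + w_i$ with $\lambda = v_j\xi_j$ and $w \perp \xi$, and handle the parallel and perpendicular parts independently. For the perpendicular part, I take $\mathbf{c}^{(2)}$ with $c_{i0}=c_{00}=0$ and $c_{ij} = \sqrt{-1}(\xi_i w_j + \xi_j w_i)$, which is symmetric and whose trace satisfies $c_{pp} = 2\sqrt{-1}(\xi\cdot w)=0$. A direct computation using $|\xi|=1$ and $\xi\cdot w=0$ gives $(R(\xi)\mathbf{c}^{(2)})_i = w_i$ and $(R(\xi)\mathbf{c}^{(2)})_0 = 0$. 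For the remaining $(\lambda\xi, v_0)$ part, I take $\mathbf{c}^{(1)}$ with $c_{ij} = a\,\delta_{ij}$, $c_{i0}=0$, $c_{00}=b$, so that $c_{pp}=na$. Substituting yields $(R(\xi)\mathbf{c}^{(1)})_i = \tfrac{\sqrt{-1}}{2}\xi_i\big((n-2)a+b\big)$ and $(R(\xi)\mathbf{c}^{(1)})_0 = \tfrac{1}{2}(na-b)$, so matching to $\lambda\xi_i$ and $v_0$ reduces to a $2\times 2$ linear system in $(a,b)$.

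The only nontrivial check is that this $2\times 2$ system is always solvable, i.e.\ that its coefficient matrix has nonzero determinant. A short calculation gives the determinant $-\sqrt{-1}(n-1)$, which is nonzero precisely when $n\ge 2$, a condition guaranteed by the standing hypothesis $n\ge 3$. Setting $\mathbf{c}=\mathbf{c}^{(1)}+\mathbf{c}^{(2)}$ then produces the desired linear right inverse. Beyond bookkeeping of factors of $\sqrt{-1}$, there is no real obstacle: once \eqref{eq:B2} has been derived, the statement is a finite-dimensional linear algebra claim about the particular matrix $R(\xi)$, and the ansätze above are chosen precisely so that the perpendicular directions are handled by off-diagonal $c_{ij}$ while the scalar degrees of freedom $(\lambda, v_0)$ are absorbed by the conformal factor $a$ and $c_{00}$.
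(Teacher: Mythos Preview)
Your proof is correct. The paper's own proof is a one-liner: it invokes the rotational invariance of the problem in $\mathbf{R}^n$ to reduce to the single case $\xi=(1,0,\ldots,0)$, and then declares the surjectivity check ``straightforward''. You instead bypass the symmetry reduction and give an explicit right inverse valid for every unit $\xi$, by splitting the target into the $\xi$-parallel scalar part (handled by the conformal ansatz $c_{ij}=a\delta_{ij}$, $c_{00}=b$) and the $\xi$-perpendicular part (handled by $c_{ij}=\sqrt{-1}(\xi_iw_j+\xi_jw_i)$). This is more work but also more informative, since it makes the right inverse visibly smooth in $\xi$ on the sphere. One minor arithmetic remark: with the normalization in \eqref{eq:B2}, the determinant of your $2\times 2$ system comes out to $-\tfrac{\sqrt{-1}}{2}(n-1)$ rather than $-\sqrt{-1}(n-1)$; this does not affect the argument, as only nonvanishing for $n\geq 2$ is needed.
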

\begin{proof}
  Since our problem is rotationally invariant in $\mathbf{R}^n$, it is
  enough to check this for a single unit vector $\xi$, for instance
  $\xi=(1,0,0,\ldots,0)$, in which case it is straight forward.
\end{proof}

Let us fix an eigenvector $\mathbf{c}$ of $Q_1(\xi)$, and define
\[ \tilde{H}(t) = H(t - T) e^{rT}e^{-e^T}. \]
Then, using Lemma~\ref{lem:translate}, we have
\[ \begin{aligned}
  L_{e^T\xi}(\tilde{H}) &= 0, \\
  \tilde{H}_{ab}(0) &\sim c_{ab} + c^{(1)}_{ab}e^{-T} + \ldots, \\
  B_{e^T\xi}(\tilde{H})|_{t=0} &\sim e^T R(\xi)\mathbf{c} + R^{(1)}(\xi,r)
  \mathbf{c}  + \ldots.
\end{aligned} \]
Writing $\zeta = e^T\xi$, and recalling that $|\xi|=1$, we have
\[ \begin{aligned}
  L_{\zeta}(\tilde{H}) &= 0, \\
  \tilde{H}_{ab}(0) &\sim c_{ab} + c^{(1)}_{ab} |\zeta|^{-1} + \ldots, \\
  B_{\zeta}(\tilde{H})|_{t=0} &\sim |\zeta| R(|\zeta|^{-1}\zeta)\mathbf{c} + R^{(1)}(|\zeta|^{-1}\zeta,r)
  \mathbf{c} + \ldots.
\end{aligned} \]

From this and Lemma~\ref{lem:Rinvertible}
it follows that as long as $|\zeta|$ is sufficiently large,
say $|\zeta| > \kappa^{-1}$, we can take suitable linear combinations of
our solutions $\tilde{H}$ (for different eigenvectors $\mathbf{c}$)
with coefficients that have an asymptotic expansion in powers of
$|\zeta|^{-1}$, and obtain $H^a$ satisfying
$L_\zeta(H^a)=0$,  $B_\zeta(H^a)|_{t=0} = \mathbf{e}_a$, and
\[ H^a(0) \sim  \Psi_a^{(-1)}(\zeta) +
\Psi^{(-2)}_a(\zeta) + \Psi^{(-3)}_a(\zeta) + \ldots, \]
where for each $i$, $\Psi_a^{(i)}$ is homogeneous of degree $i$, and
smooth on the unit sphere.
In addition we have $H^a(t) = O(e^{-nt})$ as $t\to\infty$. More
precisely we have the following estimate.
\begin{prop}\label{prop:Hadecay}
  For $|\zeta| > \kappa^{-1}$ the solutions $H^a$ satisfy
  \[ |\partial_\zeta^i H^a(t)| \leq C_i |\zeta|^{-1-i} e^{-(n-1)t}, \]
  for all $i$, and $t\geq 0$, with suitable constants $C_i$.
\end{prop}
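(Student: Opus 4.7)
The plan is to read the estimates directly off the explicit construction of $H^a$ just completed. Write
\[ H^a(\zeta,t) = \sum_{\mathbf{c}} \lambda_{\mathbf{c}}(\zeta)\,\tilde{H}_{\mathbf{c}}(\zeta,t), \]
where $\mathbf{c}$ ranges over a basis of eigenvectors of $Q_1(\zeta/|\zeta|)$, $T = \log|\zeta|$,
\[ \tilde{H}_{\mathbf{c}}(\zeta,t) = H_{\mathbf{c}}\bigl(\zeta/|\zeta|,\,t-T\bigr)\,e^{rT}e^{-e^T}, \]
and the coefficients $\lambda_{\mathbf{c}}(\zeta)$ admit an asymptotic expansion in powers of $|\zeta|^{-1}$ beginning at order $|\zeta|^{-1}$, as obtained while constructing $H^a$. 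I would first handle the case $i=0$ by a region-by-region estimate of $\tilde{H}_{\mathbf{c}}$, then bootstrap to general $i$ by differentiating in $\zeta$.

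For the $i=0$ bound, split $[0,\infty)$ into three zones. In the far region $t\geq T$, the decay from Lemma~\ref{ModelSolns} gives $|H_{\mathbf{c}}(\xi,s)|\leq Ce^{-ns}$ as $s\to\infty$ (uniformly for $|\xi|=1$), so $|\tilde{H}_{\mathbf{c}}(\zeta,t)|\leq C|\zeta|^{n+r}e^{-|\zeta|}e^{-nt}$, which for $|\zeta|\geq \kappa^{-1}$ is far smaller than $|\zeta|^{-1}e^{-(n-1)t}$. In the intermediate region $t\in[1,T]$, the irregular singular asymptotic \eqref{eq:Hasympt} yields $|\tilde{H}_{\mathbf{c}}(\zeta,t)|\leq Ce^{rt}e^{-|\zeta|(1-e^{-t})}\leq C|\zeta|^{r}e^{-c|\zeta|}$, using $1-e^{-t}\geq 1-1/e$, hence negligible. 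In the near-boundary region $t\in[0,1]$, the same expansion combined with $1-e^{-t}\geq t/2$ gives $|\tilde{H}_{\mathbf{c}}(\zeta,t)|\leq Ce^{rt-c|\zeta| t}$, which for $|\zeta|$ large is dominated by $Ce^{-(n-1)t}$. Multiplying by the coefficient bound $|\lambda_{\mathbf{c}}(\zeta)|\leq C|\zeta|^{-1}$ in each zone gives the $i=0$ estimate.

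For $i\geq 1$ I would differentiate the explicit formula and observe that each derivative in $\zeta$ costs a factor of $|\zeta|^{-1}$. The coefficients $\lambda_{\mathbf{c}}$ are asymptotically homogeneous of negative degree, so $\partial^i_\zeta \lambda_{\mathbf{c}} = O(|\zeta|^{-1-i})$. The maps $\zeta\mapsto \zeta/|\zeta|$ and $\zeta\mapsto T=\log|\zeta|$ are smooth for $\zeta\neq 0$, with derivatives of order $|\zeta|^{-1}$. The only $O(1)$ contribution comes from differentiating $e^{-|\zeta|}$, but that factor retains the super-exponentially small $e^{-|\zeta|}$, which is negligible next to any negative power of $|\zeta|$. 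Applying the three-region analysis above to each term produced by the product rule then yields $|\partial^i_\zeta H^a(\zeta,t)|\leq C_i|\zeta|^{-1-i}e^{-(n-1)t}$ by induction on $i$.

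The hard part is making rigorous the uniformity of the asymptotic expansion \eqref{eq:Hasympt}: bounding $|\tilde{H}_{\mathbf{c}}|$ in the near-boundary region $t\in[0,1]$ requires a quantitative remainder estimate for $H_{\mathbf{c}}$ at the irregular singular point $s=\infty$, valid uniformly in $\xi$ on the unit sphere and smooth in $\xi$ so that the $\zeta$-derivatives can be controlled. This follows from standard irregular singular ODE theory for a rank-one singularity with parameter, but verifying the uniform remainder bounds and their derivatives in $\xi$ is the step that requires the most care. The other zones are comparatively easy since the bounds there are dominated by the super-exponential factor $e^{-c|\zeta|}$.
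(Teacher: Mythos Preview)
For $i=0$ your three-region decomposition is a valid alternative, but the paper's route is cleaner and sidesteps exactly the difficulty you flag as ``the hard part.'' Rather than invoking uniform remainder bounds for the asymptotic expansion at the irregular singularity, the paper observes that the asymptotics of $H_{\mathbf c}(\xi,\cdot)$ at both ends force $(\log|H_{\mathbf c}|)'<-(n-1)$ outside a bounded $t$-interval, giving the single multiplicative inequality $|H_{\mathbf c}|(s+t)\le Ce^{-(n-1)t}|H_{\mathbf c}|(s)$ for all $s$ and all $t\ge0$. Taking $s=-T$ and using only that $e^{rT}e^{-e^T}|H_{\mathbf c}|(-T)$ is bounded (just the leading term of the expansion) immediately yields $|\tilde H_{\mathbf c}(\zeta,t)|\le Ce^{-(n-1)t}$, with no region splitting and no quantitative remainder estimates. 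The same device applies to $H'_{\mathbf c}$ and higher $t$-derivatives, and the paper uses this to treat the $\zeta$-derivatives via the grouped operator $e^{-T}\partial_T$.

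Your argument for $i\ge1$, however, contains a genuine error. You assert that the only $O(1)$ contribution, from differentiating $e^{-|\zeta|}$, is negligible because it ``retains the super-exponentially small $e^{-|\zeta|}$.'' But in $\tilde H_{\mathbf c}=H_{\mathbf c}(\xi,t-T)\,e^{rT}e^{-e^T}$ the factor $e^{-e^T}=e^{-|\zeta|}$ is \emph{not} small on its own: near $t=0$ it is exactly compensated by the irregular-singular growth $H_{\mathbf c}(\xi,t-T)\sim e^{r(t-T)}e^{e^{T}e^{-t}}$, as your own near-boundary bound $|\tilde H_{\mathbf c}(\zeta,0)|=O(1)$ already shows. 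Thus the piece coming from $\partial_{\zeta_k}e^{-e^T}$ equals $-\xi_k\tilde H_{\mathbf c}$, and your three-region analysis bounds it only by $Ce^{-(n-1)t}$, not by anything super-exponentially small. Estimating the product-rule pieces separately therefore yields only $|\partial_\zeta\tilde H_{\mathbf c}|\le Ce^{-(n-1)t}$ and hence $|\partial_\zeta H^a|\le C|\zeta|^{-1}e^{-(n-1)t}$, one power of $|\zeta|$ short of the claim. The missing gain comes from a leading-order cancellation between this piece and the piece from differentiating $H_{\mathbf c}(\xi,t-T)$ in $T$: at $t=0$ these contribute $-\xi_k c_{ab}$ and $+\xi_k c_{ab}$ respectively. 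A term-by-term estimate cannot see this; the two pieces must be kept together.
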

\begin{proof}
  Let $|\xi|=1$,  fix an eigenvector $\mathbf{c}$ of $Q_1(\xi)$ as
  above, and let $H(t)$ be the corresponding solution of
  $L_\xi(H)=0$. From the asymptotic behavior of $H$ as $t\to -\infty$
  we have that for suitable $c,C > 0$
  \[ (\log |H|)' < -c e^{-t} < -(n-1) \]
  for $t < -C$, while the behavior as $t\to\infty$ implies that
  \[ (\log |H|)' < -(n-1) \]
  for $t > C$. It follows from this that for any $s\in \mathbf{R}$ and
  $t\geq 0$, we have
  \[ \log |H|(s+t) < \log |H|(s) + C - (n-1)t,\]
  i.e.
  \begin{equation}\label{eq:Hst}
     |H|(s+t) \leq C e^{-(n-1)t} |H|(s)
  \end{equation}
  for a different constant $C$. Since the derivatives $\partial_t^i H$ have
  analogous asymptotics to $H$, they also satisfy estimates of
  the form \eqref{eq:Hst}.

  For large $T$, let
  \[ \tilde{H}_T(t) = H(t-T)e^{rT}e^{-e^T} \]
  as above. By the asymptotics of $H$, we have that $e^{rt}e^{-e^T}
  |H|(-T)$ is bounded for large $T$, and so using \eqref{eq:Hst} we
  have, for $t\geq 0$, that
  \[ |\tilde{H}_T(t)| \leq Ce^{-(n-1)t}, \]
  with $C$ independent of $T$. Computing a derivative
  \[ e^{-T} \partial_T \tilde{H}_T(t) = \Big[ -e^{-T} H'(t-T) - H(t-T)
  + r e^{-T}
  H(t-T)\Big] e^{rt}e^{-e^T}, \]
  and so using the analogous estimate to \eqref{eq:Hst} for $H'$
  together with a bound on $e^{(r-1)T}e^{-e^T} |H'|(-T)$ for large
  $T$, we obtain
  \[ |e^{-T} \partial_T \tilde{H}_T(t)| \leq Ce^{-(n-1)t}, \]
  for $t \geq 0$.
  We can bound further derivatives $(e^{-T}\partial_T)^i \tilde{H}_T$
  in a similar way.

  Using the substitution $T = \log|\zeta|$, this implies that
  \[ |\partial_\zeta^i \tilde{H}_{\log|\zeta|}(t)| \leq
  Ce^{-(n-1)t} \]
  for $t\geq 0$. The solutions $H^a$ are obtained by taking linear
  combinations of such $\tilde{H}$, with coefficients that are of
  order $|\zeta|^{-1}$, and have an asymptotic expansion in powers of
  $|\zeta|^{-1}$. The required estimates follow from this.
\end{proof}

\subsection{Prescribing the Bianchi operator for all $\xi$}
We have seen in section~\ref{subsec:smallxi}
 that for sufficiently small $\xi$ we can find
solutions $\tilde{H}^a$ of \eqref{ODEh}, such that
$B_\xi(\tilde{H}^a)|_{t=0} = \mathbf{e}_a$. Applying
Lemma~\ref{lem:translate} this means that if we fix $\xi$ with
$|\xi|=1$, then we have solutions $H^a$ of $L_\xi(H^a)=0$ with
$B_\xi(H^a)|_{t=T} = \mathbf{e}_a$ for some large $T$. A crucial
result is the following.

\begin{prop} The vectors $B_\xi(H^a)(t) \in \mathbf{C}^{n+1}$ are
  linearly independent for all $t\in \mathbf{R}$.
\end{prop}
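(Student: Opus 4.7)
The approach is a proof by contradiction that applies, at the linearized level, the maximum principle argument of Lemma~\ref{prop:GrahamLee} to the Bianchi image $\omega = \beta_{g_{\mathbf{H}}}(h)$ of a suitable $2$-tensor $h$ on hyperbolic space.

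Suppose for contradiction that the vectors $B_\xi(H^a)(t_0)$ are linearly dependent at some $t_0 \in \mathbf{R}$, and choose constants $c_a$, not all zero, with $\sum_a c_a B_\xi(H^a)(t_0) = 0$. Set $H = \sum_a c_a H^a$, so that $L_\xi H = 0$, $B_\xi(H)(t_0) = 0$, $B_\xi(H)(T) = \sum_a c_a \mathbf{e}_a \neq 0$, and $H(t) = O(e^{-nt})$ as $t \to \infty$. I form the complex-valued symmetric $2$-tensor $h$ on $\mathbf{H}^{n+1} = \mathbf{R}^n \times \mathbf{R}$ whose components are the Fourier mode $e^{\sqrt{-1}\,\xi \cdot x}$ times the appropriately weighted $H_{ab}(t)$ per \eqref{FT}; by construction $Lh = 0$ on all of $\mathbf{H}^{n+1}$. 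Linearizing the contracted Bianchi identity at the Einstein metric $g_{\mathbf{H}}$ gives $\beta_{g_{\mathbf{H}}}(D\mathrm{Ric}(h) + nh) = 0$, so applying $\beta_{g_{\mathbf{H}}}$ to the identity $Lh = D\mathrm{Ric}(h) + nh + \delta^*_{g_{\mathbf{H}}} \beta_{g_{\mathbf{H}}}(h) = 0$ yields
\[
\beta_{g_{\mathbf{H}}}\bigl(\delta^*_{g_{\mathbf{H}}} \omega\bigr) = 0, \qquad \omega := \beta_{g_{\mathbf{H}}}(h).
\]

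The Bochner-type computation used in the proof of Lemma~\ref{prop:GrahamLee}, combined with $\mathrm{Ric}(g_{\mathbf{H}}) = -n g_{\mathbf{H}}$, then produces the differential inequality
\[
\Delta_{g_{\mathbf{H}}} |\omega|^2 \geq n\, |\omega|^2.
\]
Because the $x$-dependence of $\omega$ is the unit-modulus factor $e^{\sqrt{-1}\,\xi \cdot x}$, the function $u(t) := |\omega|^2$ depends only on $t$, and $\Delta_{g_{\mathbf{H}}}$ acting on such a function equals $u'' + n u'$. The inequality becomes the ODE inequality $u'' + nu' \geq nu$, with $u \geq 0$, $u(t_0) = 0$, and $u(t) \to 0$ as $t \to \infty$. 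If $u(t_*) > 0$ for some $t_* > t_0$, then $u$ would attain an interior maximum at some $t_1 \in (t_0, \infty)$ where $u'(t_1) = 0$ and $u''(t_1) \leq 0$, contradicting $u''(t_1) \geq n u(t_1) > 0$. Hence $B_\xi(H)(t) = 0$ for all $t \geq t_0$.

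Finally, since the coefficients of the ODE system $L_\xi(H) = 0$ are polynomials in $e^{-t}$, solutions $H(t)$, and hence $B_\xi(H)(t)$, are real-analytic on $\mathbf{R}$; vanishing on a half-line therefore forces $B_\xi(H) \equiv 0$ on all of $\mathbf{R}$, contradicting $B_\xi(H)(T) \ne 0$. The main technical point to verify is the Bochner inequality, whose strict positive right-hand side—needed to drive the contradiction—relies crucially on the strict negativity of $\mathrm{Ric}(g_{\mathbf{H}})$; the sharp decay $H = O(e^{-nt})$ established earlier is also essential, ensuring $u \to 0$ at $+\infty$ so that the one-sided maximum principle applies even though the $H^a$ grow super-exponentially as $t \to -\infty$.
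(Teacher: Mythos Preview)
Your proof is correct and follows essentially the same route as the paper: form a nontrivial linear combination $H$ with $B_\xi(H)(t_0)=0$, pass to the periodic tensor $h$ on $\mathbf{H}^{n+1}$, invoke the linearized Bianchi identity to get $\beta_{g_{\mathbf H}}(\delta^*_{g_{\mathbf H}}\omega)=0$, and apply the maximum-principle argument of Lemma~\ref{prop:GrahamLee}. Your write-up is simply more explicit---you compute the Bochner constant, reduce $|\omega|^2$ to a function of $t$ alone, and close with real-analyticity to cover the case $T<t_0$---but none of this departs from the paper's argument.
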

\begin{proof}
 This follows from the maximum principle, analogously to
 Lemma~\ref{prop:GrahamLee}. Indeed, if there was a value of $t$ at
 which the vectors were not linearly independent, then we could form a
 linear combination and take the inverse Fourier transform to obtain
 a periodic element $h$ in the kernel of $L_{g_{\mathbf{H}}}$ for
 which $\omega = \beta_{g_{\mathbf{H}}}(h)$ vanishes at some value of
 $t$. This contradicts that $|\omega|$ cannot admit an interior
 maximum.
\end{proof}

Applying Lemma~\ref{lem:translate} again, it follows that for all
$\xi$ we can find suitable solutions $H^a$ of $L_\xi(H^a)=0$,
satisfying $B_\xi(H^a)|_{t=0}= \mathbf{e}_a$. In the previous two
subsections we have constructed special collections of such $H^a$ for
sufficiently small, and for sufficiently large $|\xi|$
respectively. Combining these with suitable cutoff functions, we
obtain the following.

\begin{prop}\label{prop:Fouriersolutions}
For all $\xi\ne 0$ we have solutions $H^a(\xi,t)$ of
$L_\xi(H^a)=0$, $B_\xi(H^a)|_{t=0} = \mathbf{e}_a$, which depend
smoothly on $\xi$ such that in
addition we have
\begin{enumerate}
\item For small $\xi$
  \begin{equation}\label{eq:HaPhi}
   H^a(\xi, 0) = |\xi|^{-2} \Phi^a(\xi)
\end{equation}  for smooth $\Phi^a$ with $\Phi^a(0)=0$,
\item For large $\xi$ we have an asymptotic expansion
  \begin{equation}\label{eq:Haasympt}
 H^a(\xi, 0)\sim \Psi_a^{(-1)}(\xi) +
\Psi^{(-2)}_a(\xi) + \Psi^{(-3)}_a(\xi) + \ldots,
\end{equation}
where each $\Psi_a^{(i)}$ is homogeneous of degree $i$, and smooth
on the unit sphere.
\item For $t\geq 0$ and all $\xi\ne 0$ we have
  \begin{equation} \label{eq:tdecay}
 |\partial_\xi^i H^a(\xi, t)| \leq C_i |\xi|^{-1-i}
  e^{-(n-1)t},
  \end{equation}
  for constants $C_i$.
\end{enumerate}
\end{prop}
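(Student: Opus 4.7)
The plan is to combine three constructions from the preceding subsections---the small-$\xi$ solutions from Lemma~\ref{smallsolutions}, the large-$\xi$ solutions satisfying Proposition~\ref{prop:Hadecay}, and an intermediate construction on a compact annulus---by gluing them with cutoff functions in the $\xi$-variable. The key observation that makes this strategy work is that both $L_\xi$ and $B_\xi(\cdot)|_{t=0}$ are linear in $H$ with coefficients that depend only on $\xi$, so multiplying by a $\xi$-dependent cutoff and summing preserves both the ODE system and the boundary condition: if each piece satisfies $L_\xi H = 0$ and $B_\xi(H)|_{t=0} = \mathbf{e}_a$, then so does any convex combination with weights summing to $1$.

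For the intermediate construction, on a compact annulus $\kappa/2 \leq |\xi| \leq 2\kappa^{-1}$, I would first invoke the linear-independence proposition stated just above together with Lemma~\ref{lem:translate} to obtain, at each fixed $\xi$, solutions $H^a_{\mathrm{mid}}$ of $L_\xi(H^a_{\mathrm{mid}})=0$ satisfying $B_\xi(H^a_{\mathrm{mid}})|_{t=0}=\mathbf{e}_a$. To obtain smooth dependence on $\xi$ I would exhibit the $(n+1)(n+2)/2$-dimensional space of decaying solutions as a smooth vector bundle over the annulus, using continuous dependence of ODE solutions on parameters applied to a basis of Cauchy data at some fixed large $t_0$, and then select a smooth right inverse of the (smoothly varying, surjective) linear map $H\mapsto B_\xi(H)|_{t=0}$ into $\mathbf{C}^{n+1}$.

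Next I would fix smooth cutoffs $\chi_1,\chi_2\colon [0,\infty)\to [0,1]$ with $\chi_1=1$ for $s\leq \kappa/2$ and $\chi_1=0$ for $s\geq \kappa$, and analogously $\chi_2=1$ for $s\geq 2\kappa^{-1}$, $\chi_2=0$ for $s\leq \kappa^{-1}$, and define
\[ H^a(\xi,t) = \chi_1(|\xi|) H^a_{\mathrm{small}} + \bigl(1-\chi_1(|\xi|)-\chi_2(|\xi|)\bigr) H^a_{\mathrm{mid}} + \chi_2(|\xi|) H^a_{\mathrm{large}}. \]
By the linearity noted above, $L_\xi(H^a)=0$ and $B_\xi(H^a)|_{t=0}=\mathbf{e}_a$ for all $\xi\ne 0$. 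Property~(1) then follows because $H^a=H^a_{\mathrm{small}}$ on $|\xi|\leq \kappa/2$, and property~(2) because $H^a=H^a_{\mathrm{large}}$ on $|\xi|\geq 2\kappa^{-1}$. For property~(3) the large-$\xi$ bound is exactly Proposition~\ref{prop:Hadecay}; in the compact intermediate range it reduces to uniform boundedness of $H^a$ and its $\xi$-derivatives together with the $O(e^{-nt})$ decay of the ODE solutions; and in the small-$\xi$ range I would use that smoothness of $\Phi^a$ with $\Phi^a(0)=0$ gives $\Phi^a(\xi)=O(|\xi|)$, so $|\xi|^{-2}\Phi^a$ and its $i$-th $\xi$-derivative are $O(|\xi|^{-1-i})$, which combined with the $t$-decay of the small-$\xi$ construction yields the claim.

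The main obstacle I expect is verifying the uniform $e^{-(n-1)t}$ bound on $\partial_\xi^i H^a$ in the small-$\xi$ range with constants independent of $\xi$. This requires revisiting the inductive construction of $H$ via the series \eqref{eq:Hseries} and the inhomogeneous ODEs \eqref{eq:partialkH}, and confirming that after extracting the $|\xi|^{-2}$ singularity from $H^a_{\mathrm{small}}$ the decay estimates for each order of $\xi$-derivative hold uniformly as $\xi\to 0$. By contrast, the smooth-selection step on the intermediate annulus is a standard vector-bundle argument, and the gluing via cutoffs in $\xi$ cannot spoil the $t$-decay because the cutoffs commute with the $t$-derivatives appearing in $L_\xi$ and $B_\xi$.
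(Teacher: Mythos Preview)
Your proposal is correct and follows essentially the same approach as the paper, which also assembles the small-$\xi$, large-$\xi$, and intermediate-range constructions via cutoff functions in $\xi$ (the paper's argument is in fact just the one-line remark ``Combining these with suitable cutoff functions, we obtain the following'' immediately preceding the proposition). Your fleshed-out details---the convex-combination observation that $\xi$-dependent cutoffs preserve both $L_\xi H=0$ and the boundary condition, the vector-bundle argument for smooth dependence on the intermediate annulus, and the identification of the small-$\xi$ case of property~(3) as the main remaining check---are all sound elaborations of what the paper leaves implicit, and the small-$\xi$ check is largely handled by \eqref{eq:Habc}.
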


We can now state the main result of this subsection.
\begin{prop}\label{prop:Hinverse2}
  Suppose
  that $\eta \in T^*\mathbf{H}^{n+1}|_{t=0}$ is a one-form, satisfying
  the following estimates:
  \begin{enumerate}
    \item $\Vert \eta\Vert_{C^{1,\alpha}} \leq C$,
    \item $\Vert \eta\Vert_{C^{1,\alpha}(B_R \setminus B_{R-1})} \leq
        CR^{-2n+1/2}$ for $R > 1$,
  \end{enumerate}
  and in addition for all $i=1, \ldots, n$, and each component $\eta_a$ we have
  \begin{equation}\label{eq:inteta}
 \int_{\mathbf{R}^n} \eta_a\, dx = \int_{\mathbf{R}^n} x_i
  \eta_a\, dx = 0.
\end{equation}
  Then there exists a symmetric two tensor $h\in
  C^{2,\alpha}_\delta(\mathbf{H}^{n+1}_+)$ satisfying $Lh = 0$ in
  $\mathbf{H}^{n+1}_+$, such that $h$ has the boundary condition
  $\eta$, i.e. $\beta(h)|_{t=0} = \eta$ and
  in addition $h$ satisfies the following decay estimate, for any
  $\delta\in (0,1)$:
  \begin{equation}\label{eq:hdecay}
      \Vert h\Vert_{C^{2,\alpha}_1(A_{R-1,R})} <
      C'(1+R)^{-n-1+\delta}
   \end{equation}
    for all $R > 0$,  where $C'$ depends on the constant $C$ and on $\delta$.
\end{prop}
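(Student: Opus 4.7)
The plan is to produce $h$ by an explicit Fourier transform construction using the model solutions $H^a$ from Proposition~\ref{prop:Fouriersolutions}. Writing $\eta = \eta_a\,dx^a$ with the convention $x^0 = t$, and denoting by $\widehat{\eta_a}(\xi)$ the Fourier transform of $\eta_a$ in the $\mathbf{R}^n$ variables, I define
\[
  \widetilde H(\xi, t) = \sum_{a=0}^{n} \widehat{\eta_a}(\xi)\, H^a(\xi, t),
\]
and recover $h$ from the components of $\widetilde H$ by inverting the rescaled Fourier transforms \eqref{FT}. By linearity $L_\xi \widetilde H = 0$, and since $B_\xi(H^a)|_{t=0} = \mathbf{e}_a$ we obtain $B_\xi(\widetilde H)|_{t=0} = \widehat\eta(\xi)$. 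Translating back through \eqref{BianH}, this is exactly the condition $\beta(h)|_{t=0} = \eta$, while $L h = 0$ on $\mathbf{H}^{n+1}_+$ follows from the componentwise Fourier equation.

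The real content is the decay estimate \eqref{eq:hdecay}. I would split $\widetilde H = \chi(\xi)\widetilde H + (1-\chi(\xi))\widetilde H$ with $\chi$ a smooth cutoff localizing to a small neighborhood of $\xi = 0$, chosen below the threshold from Proposition~\ref{prop:Fouriersolutions}. On the high-frequency piece $(1-\chi)\widetilde H$ the asymptotic expansion \eqref{eq:Haasympt} in powers of $|\xi|^{-1}$, together with the $t$-decay \eqref{eq:tdecay} and the rapid decay of $\widehat\eta$ inherited from the polynomial decay hypothesis $R^{-2n+1/2}$ on $\eta$, reduce the analysis to standard oscillatory-integral bounds: repeated integration by parts in $\xi$ produces spatial decay of any polynomial order. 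On the low-frequency piece $\chi\widetilde H$, the bound $|H^a(\xi,0)|\le C|\xi|^{-1}$ coming from $H^a = |\xi|^{-2}\Phi^a$ would naively yield no spatial decay at all. Here the two moment vanishing hypotheses \eqref{eq:inteta} enter decisively: they translate into $\widehat{\eta_a}(0) = 0$ and $\partial_{\xi_i}\widehat{\eta_a}(0) = 0$, so $\widehat{\eta_a}(\xi) = O(|\xi|^2)$, precisely cancelling the $|\xi|^{-2}$ factor in $H^a$ and leaving a continuous symbol at $\xi = 0$.

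The main obstacle is extracting the sharp decay rate $R^{-n-1+\delta}$ from the resulting low-frequency integral. Even after the moment cancellation, the product $\widehat{\eta_a}(\xi)\,|\xi|^{-2}\,\Phi^a(\xi,0)$ is only finitely smooth at the origin, because Taylor expanding $\widehat{\eta_a}$ to order two leaves anisotropic quotients of the form $\xi_i\xi_j/|\xi|^2$ which are bounded but not differentiable at $\xi = 0$. Plain integration by parts therefore stops short of the desired order; instead one has to analyze the symbol dyadically (e.g.\ via a Littlewood--Paley decomposition around the origin), showing it has fractional Besov regularity of order $n-1+\delta'$ for any $\delta' < 1$, and apply standard Fourier multiplier/Hausdorff--Young-type estimates to transfer this to spatial decay $R^{-n-1+\delta}$ for any $\delta\in(0,1)$. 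Finally, the $t \geq 0$ decay follows from the uniform exponential factor $e^{-(n-1)t}$ in \eqref{eq:tdecay}, and the pointwise bound is upgraded to the full $C^{2,\alpha}_1(A_{R-1,R})$ estimate by applying interior Schauder estimates to the homogeneous elliptic equation $Lh = 0$ on balls of bounded hyperbolic radius inside each annulus.
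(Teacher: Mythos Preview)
Your overall construction---defining $\widetilde H(\xi,t)=\sum_a \widehat{\eta_a}(\xi)H^a(\xi,t)$ and splitting into low and high frequencies---matches the paper exactly, and your observation that the moment conditions \eqref{eq:inteta} cancel the $|\xi|^{-2}$ singularity in $H^a$ is the key point. But there is a genuine error in your treatment of the high-frequency piece: you claim that $\widehat\eta$ has ``rapid decay'' inherited from the polynomial decay $R^{-2n+1/2}$ of $\eta$. This is backwards. Under the Fourier transform, spatial decay of $\eta$ translates into \emph{regularity} of $\widehat\eta$ (roughly $C^{n-1}$ here), while the decay of $\widehat\eta$ is governed by the \emph{regularity} of $\eta$, which is only $C^{1,\alpha}$. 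So $|\widehat\eta(\xi)|\lesssim |\xi|^{-1-\alpha}$ at best, and you cannot integrate by parts in $\xi$ more than about $n-1$ times, which falls short of the $n+1$ or so needed for the target decay $R^{-n-1+\delta}$.

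The paper avoids this by working on the physical side throughout. For the high-frequency part at $t=0$ it writes $h_2^a=\eta_a\ast K$ with $K=\mathcal F^{-1}[(1-\rho)H^a]$, then expands $H^a$ via \eqref{eq:Haasympt} into homogeneous pieces $\Psi_a^{(-i)}$ (whose inverse transforms are singular-integral kernels concentrated near the origin, as in Gilbarg--Trudinger~\S4.3) plus a remainder whose inverse transform is smooth and exponentially decaying. Convolution with such kernels simply transfers the known spatial decay of $\eta$ to $h_2$. For the low-frequency part the paper factors $|\xi|^{-2}$ out explicitly, using $\mathcal F^{-1}(|\xi|^{-2})=c|x|^{2-n}$: one first convolves $\eta_a$ with the Schwartz function $\mathcal F^{-1}[\rho\Phi^a]$ (which, since $\Phi^a(0)=0$, produces a function $N$ with the same decay as $\eta$ but now \emph{three} vanishing moments), and then convolves $N$ with $|x|^{2-n}$, where a direct Taylor expansion gives the decay $|x|^{-n-1+\delta}$. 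This is considerably more explicit than the Besov/Littlewood--Paley route you propose, though yours could in principle be made to work for the low-frequency piece. For $t>0$ the paper again uses the convolution viewpoint (Lemmas~\ref{lem:Fourierdecay} and~\ref{lem:convolutiondecay}), followed by Schauder estimates as you say.
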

\begin{proof}
  We use the solutions $H^a$ of $L_\xi(H^a)=0$ from
  Proposition~\ref{prop:Fouriersolutions} to define
  \[ H(\xi, t) = \sum_a \widehat{\eta_a}(\xi) H^a(\xi,
  t). \]
  Then the inverse Fourier transform $h(x,t)$ of $H$ will satisfy $Lh
  = 0$, and by construction $\beta(h)|_{t=0} = \eta$ will hold. What remains is
  to verify that $h$ satisfies the required estimates. We will first
  focus on the relevant estimates at $t=0$.

  Let us define a cutoff function $\rho$ such that $\rho(s)=1$ for $s
  < 1/2$, and $\rho(s)=0$ for $s > 1$.
  Let us write $h=h_1 + h_2$ where $h_1$ is the inverse
  Fourier transform of $H_1 = \rho(|\xi|) H$, and $h_2$ is the
  inverse transform of $H_2 = (1-\rho(|\xi|))H$. I.e. we collect the
  small Fourier modes in $h_1$, and the large ones in $h_2$. We prove
  the required estimates for $h_1,h_2$ separately.

  We have $h_1 = \sum_a h_1^a$, where
  \[ h^a_1(x,0) = \mathcal{F}^{-1}\Big[ \rho(|\xi|)
  \widehat{\eta_a}(\xi) |\xi|^{-2} \Phi^a(\xi)\Big], \]
  in terms of the formula \eqref{eq:HaPhi} for $H^a$, and the
  inverse Fourier transform $\mathcal{F}^{-1}$.
  In terms of convolutions we have
  \[ h^a_{1} (x,0) = \eta_a \ast \mathcal{F}^{-1}\Big[ \rho(|\xi|)
  \Phi^a(\xi)\Big] \ast
  \mathcal{F}^{-1}(|\xi|^{-2}).  \]
  Since $\rho(|\xi|)$ is smooth and compactly supported, the inverse
  Fourier transform of $\rho(|\xi|) \Phi^a(\xi,t)$ is a Schwarz
  function. It follows that the function
  \[ N(x) = \eta_a \ast \mathcal{F}^{-1}\Big[\rho(|\xi|)
  \Phi^a(\xi)\Big] \]
  satisfies the same decay estimates as $\eta_a$, but for all
  derivatives rather than just the $C^{1,\alpha}$ norm. In addition we
  have $\Phi^a(0)=0$, and the assumption \eqref{eq:inteta}
  implies $\hat{\eta}(0)= \partial_{\xi_i}
  \hat{\eta}(0)= 0$ for all $i$. For $N$ this implies
  \[ \int_{\mathbf{R}^n} N(x)\, dx = \int_{\mathbf{R}^n} x_i
  N(x)\,dx = \int_{\mathbf{R}^n} x_i x_j N(x)\,dx =0 \]
  for all $i, j$.

  At the same time in the sense of distributions we have
  \[ \mathcal{F}^{-1}\left(|\xi|^{-2}\right) = c |x|^{2-n}, \]
  for a dimensional constant $c$, so
  \[ h^a_{1}(x,t) = c\,N(x)\ast |x|^{2-n}. \]
  The required decay estimate for $h^a_{1}$ follows from this.

  Let us now consider $h_2$. Then $h_2$ is a sum of terms $h_2^a$, where
  \[ \begin{aligned}
       h_2^a(x,0) &= \mathcal{F}^{-1}\Big[ (1-\rho(|\xi|)) \hat{\eta}_a(\xi)
  H^a(\xi)\Big] \\
       &= \eta_a \ast \mathcal{F}^{-1}\Big[ (1-\rho(|\xi|)) H^a(\xi)
       \Big].
   \end{aligned} \]
  Using the asymptotic expansion \eqref{eq:Haasympt}
   for $H^a(\xi)$, we have
   \[ H^a(\xi) \sim \Psi_a^{(-1)}(\xi) + \ldots + \Psi_a^{(-n-2)}(\xi) +
   \Theta_a(\xi), \]
  where for large $\xi$ we have $\nabla_\xi^k \Theta_a(\xi) =
  O(|\xi|^{-n-3-k})$. It follows that
  \[ K_{\Theta_a}(x) = \mathcal{F}^{-1}\Big[ (1-\rho(|\xi|)) \Theta_a(\xi) \Big] \]
  is smooth on $\mathbf{R}^n$, and  $K_{\Theta_a}, \nabla_x K_{\Theta_a},
  \nabla^2_x K_{\Theta_a}$ decay exponentially fast. In particular
  $\eta_a\ast K_{\Theta_a}$ satisfies the required estimates.

  Let us write
  \[ K^{(-i)}_a = \mathcal{F}^{-1}\Big[ (1-\rho(|\xi|))
  \Psi_a^{(-i)}\Big]. \]
  Then the distribution $\nabla_x^i K^{(-i)}_a$ is the Fourier transform of a
  function which for large $\xi$ is homogeneous of degree zero. The
  decay of the derivatives of $\Psi_a^{(-i)}$ implies that
  $K^{(-i)}_a$ has singular support at the origin, and all of its
  derivatives decay
  exponentially fast away from the origin. It
  follows from these properties (as in Gilbarg-Trudinger, Section 4.3 for the
  Poisson equation) that for each $i$,
  \[   \eta_a \ast \nabla_x^i K^{(-i)}_a \]
  decays in $C^{1,\alpha}$ (or in any other H\"older space) at the
  same rate as $\eta_a$. Since $i \geq 1$, we obtain the required
  $C^{2,\alpha}$ estimates for $h_2^a(x,0)$.

  We now consider $h(x,t)$ for $t\geq 0$. Our goal is to show that
  $e^{t}|h(x,t)| \leq C'(1+|x|)^{-n-1+\delta}$, since then Schauder estimates together
  with our estimate for the boundary values of $h$ imply the required
  $C^{2,\alpha}$ estimates. As above, for each $t$, $h(x,t)$ is obtained as a convolution
  of components of $\eta$ with the Fourier transforms of the solutions
  $H^a(\xi,t)$ of Proposition~\ref{prop:Fouriersolutions}. The
  property \eqref{eq:tdecay} together with
  Lemma~\ref{lem:Fourierdecay} and Lemma~\ref{lem:convolutiondecay}
  below implies the result.
\end{proof}

\begin{lem}\label{lem:Fourierdecay}
  Suppose that $f : \mathbf{R}^n\to\mathbf{R}$ satisfies the estimates
  \begin{equation}\label{eq:dxif} |\partial_x^i f(x)| \leq C_i
    |x|^{-1-i}
  \end{equation}
  for $x\ne 0$. For any $\delta\in (0,1)$, the Fourier transform
  $\hat{f}$ of $f$ in the sense of distributions then satisfies
  \begin{equation}\label{eq:dxif2}
 |\partial_\xi^j \hat{f}(\xi)| \leq \begin{cases} C_j'
    |\xi|^{1-n-j-\delta}, &\text{ for } |\xi| < 2 \\
    C_j' |\xi|^{1-n-j+\delta}, &\text{ for } |\xi| > 1.
   \end{cases}
  \end{equation}
\end{lem}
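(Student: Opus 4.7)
The plan is to prove both bounds simultaneously via a Littlewood--Paley dyadic decomposition of $f$: the decay hypothesis $|\partial_x^i f| \leq C_i |x|^{-1-i}$ translates, after rescaling, into uniform smoothness on unit-scale annuli, and then Schwartz decay of the unit-scale Fourier transforms will control the sum.

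First I would fix $\psi \in C_c^\infty(\mathbf{R}^n)$ with $\psi \equiv 1$ on $\{|x| \leq 1/2\}$ and $\mathrm{supp}(\psi) \subset \{|x| \leq 2\}$, let $\chi_k(x) = \psi(2^{-k}x) - \psi(2^{1-k}x)$ and $f_k = \chi_k f$, so that $\sum_{k\in \mathbf{Z}} f_k = f$ pointwise on $\mathbf{R}^n \setminus \{0\}$ and $f_k$ is supported in $\{|x|\sim 2^k\}$. Setting $g_k(y) = 2^k f_k(2^k y)$ produces a function supported in a fixed annulus $\{|y|\sim 1\}$ whose derivatives are uniformly bounded in $k$: the prefactor $2^k$ exactly cancels the $|x|^{-1}$ size of $f$ on the $k$-th shell, and the hypothesis on $\partial_x^i f$ gives the higher derivative bounds. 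Consequently $\hat{g}_k$ satisfies uniform Schwartz estimates $|\partial_\eta^j \hat{g}_k(\eta)| \leq C_{j,N}(1+|\eta|)^{-N}$ for all $j, N$.

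The scaling identity $\hat{f}_k(\xi) = 2^{k(n-1)} \hat{g}_k(2^k\xi)$ then yields
\[ |\partial_\xi^j \hat{f}_k(\xi)| \leq C_{j,N}\, 2^{k(n-1+j)}(1+2^k|\xi|)^{-N}. \]
For fixed $\xi \neq 0$, choose $k_0 \in \mathbf{Z}$ with $2^{k_0} \sim |\xi|^{-1}$ and split the sum. On the range $k \leq k_0$ take $N=0$; on $k > k_0$ take any $N > n-1+j$. Each is a geometric series summing to $\leq C|\xi|^{1-n-j}$, which is even sharper than the claimed \eqref{eq:dxif2} in both regimes (the factors $|\xi|^{\pm\delta}$ in the statement are a convenient cushion).

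The only technical subtlety is that since $f$ has only $|x|^{-1}$ decay at infinity, $\hat{f}$ is defined a priori only as a tempered distribution; however the estimates above show that $\sum_k \hat{f}_k$ converges absolutely and locally uniformly on $\mathbf{R}^n \setminus \{0\}$ and agrees there with $\hat{f}$ distributionally, so the pointwise bounds on $\partial_\xi^j \hat{f}(\xi)$ away from the origin are legitimate. I expect this bookkeeping---rather than any of the estimates themselves---to be the main technical nuisance.
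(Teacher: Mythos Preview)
Your argument is correct and in fact yields the sharper bound $|\partial_\xi^j\hat f(\xi)|\leq C_j|\xi|^{1-n-j}$ without any $\delta$-loss, which is the natural scaling (attained for instance by $f(x)=|x|^{-1}$). The distributional bookkeeping you flag is indeed routine: each partial sum $\sum_{|k|\leq K}f_k$ is compactly supported away from the origin, its Fourier transform is smooth, and your estimates show the series of Fourier transforms converges absolutely and locally uniformly on $\mathbf{R}^n\setminus\{0\}$; convergence of the partial sums to $f$ in $\mathcal{S}'$ then identifies the limit with $\hat f$ there.

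The paper takes a genuinely different route. It splits $f=f_1+f_2$ with $f_1$ supported near the origin and $f_2$ supported away from it, and then for each piece applies a fractional power of the Laplacian: writing $(n-1\mp\delta)/2=k+s$ with $k\in\mathbf{Z}$ and $s\in(0,1)$, it checks that $\Delta^{k+s}f_i\in L^1$, so that $|\xi|^{n-1\mp\delta}\hat f_i$ is bounded. The fractional order is what forces the $\delta$-cushion. Your dyadic decomposition avoids fractional operators entirely, handles all $j$ at once rather than reducing to $j=0$ via multiplication by monomials, and gives the endpoint exponent directly; the paper's approach is perhaps more hands-on but requires the reader to be comfortable with the singular-integral formula for $\Delta^s$. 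Either suffices for the application, since only the exponent $1-n-j+\delta$ is used downstream.
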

\begin{proof}
  We will first prove the $j=0$ case of the required inequality,
  i.e. we prove that assuming the estimate \eqref{eq:dxif}, we have
  \[ |\hat{f}(\xi)| \leq \begin{cases} C_j'
    |\xi|^{1-n-\delta}, &\text{ for } |\xi| < 2 \\
    C_j' |\xi|^{1-n+\delta}, &\text{ for } |\xi| > 1.
   \end{cases} \]
  Let us write $f=f_1 + f_2$, where $f_1$ is supported in $B_2(0)$,
  and $f_2$ is supported on $\mathbf{R}^n\setminus B_1(0)$.

  Consider $f_1$ first. Let us write
  \[ \frac{n-1-\delta}{2} = k + s, \]
  where $k\in \mathbf{Z}$ and $s\in (0,1)$. The estimates \eqref{eq:dxif} imply that
  \[
   |\Delta_x^{k+s} f_1(x)| \leq \begin{cases} C |x|^{-n+\delta},
  &\text{ for } 0 < |x| < 2, \\
   C |x|^{-n-2s}\, &\text{ for } |x| > 1, \end{cases}
  \]
  where we are using the fractional Laplacian $\Delta^s$. To see this,
  note that from our assumptions we have
  \[ |\partial_x^i \Delta_x^k f_1(x)| \leq C|x|^{-1-k-i}, \]
  and the required estimate then follows from the integral formula
  \[ \Delta_x^{k+s} f_1(x) = c_{n,s}
  \int_{\mathbf{R}^n} \frac{u(x+y) + u(x-y) - 2u(x)}{|y|^{n+2s}}\,
  dy, \]
  where $u(x) = \Delta_x^k f_1(x)$. In particular we find that
  $\Delta_x^{k+s} f_1 \in L^1$, and so on the Fourier transform side
  we obtain that $|\xi|^{2(k+s)} \hat{f}_1$ is bounded, i.e.
  \[  | \hat{f_1}(\xi) | < C|\xi|^{1-n+\delta}, \]
  for all $\xi$. At the same time the fact that $f_1$ is compactly
  supported implies that $\hat{f}_1$ is actually smooth and in
  particular it is bounded near $\xi=0$.

  We can deal with $f_2$ in a similar way, letting
  \[ \frac{n-1+\delta}{2} = k + s\]
  this time. Since $\partial_x^N f_2 \in L^1$ for all $N > n-1$, it
  follows that $\hat{f}_2$ decays at infinity faster than any
  polynomial, while a similar argument to the above, using the fractional
  Laplacian, shows that
  \[ |\hat{f}_2(\xi) | < C|\xi|^{1-n-\delta} \]
  for $0 < |\xi| < 1$, say. Combining these estimates for $\hat{f}_1,
  \hat{f}_2$, we obtain the required bound for $\hat{f}$.

  Given the estimate \eqref{eq:dxif2} for $j=0$, we can obtain the
  general case if we replace $f$ by $P_j(x)f(x)$ for degree $j$
  monomials $P_j$.
\end{proof}

\begin{lem}\label{lem:convolutiondecay}
  Suppose that $f:\mathbf{R}^n\to\mathbf{R}$ satisfies  $|f(x)|\leq
  C(1 + |x|)^{-2n+1/2}$, and
  \[ \int_{\mathbf{R}^n} f(x)\,dx = \int_{\mathbf{R}^n} x_i f(x)\,dx =
  0\]
  for each $i$. Let $K:\mathbf{R}^n\to\mathbf{R}$ be such that
  for some $\delta\in (0,1)$,
  \[ |\partial_x^i K(x)| \leq \begin{cases} C_i |x|^{1-n-\delta},
    &\text{ for }|x| < 2\\
   C_i |x|^{1-n+\delta}, &\text{ for }|x| > 1. \end{cases} \]
  Then the convolution $g = f\ast K$ satisfies $|g(x)| \leq
  C'(1+|x|)^{-1-n+\delta}$.
\end{lem}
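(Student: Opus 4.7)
The plan is to split the convolution $g(x) = \int f(y) K(x-y)\,dy$ according to the relative sizes of $|y|$ and $|x|$. For $|x|\leq 2$ the bound $|g(x)| \leq C'$ is elementary: the singularity of $K$ at the origin is integrable since $|K(z)|\lesssim |z|^{1-n-\delta}$ with $1-\delta > 0$, while for $|y|$ large the product $|f(y)K(x-y)|$ is dominated by $|y|^{-3n+3/2+\delta}$, which after multiplying by the volume factor $|y|^{n-1}$ is integrable for $n\geq 3$. So the real task is to treat $|x|\geq 2$ and show $|g(x)| \leq C'|x|^{-n-1+\delta}$.

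For $|x|\geq 2$ I would decompose
\[ g(x) = \int_{|y|<|x|/2} + \int_{|x|/2\leq |y|\leq 2|x|} + \int_{|y|>2|x|} =: g_1 + g_2 + g_3. \]
The crucial term is $g_1$, which is where the vanishing moment hypothesis enters. On this region $|x-y|\geq |x|/2$, so $K$ is smooth at $x-y$, and I can Taylor expand
\[ K(x-y) = K(x) - \nabla K(x)\cdot y + E(x,y), \qquad |E(x,y)| \leq \tfrac12 |y|^2 \sup_{z\in [x,x-y]} |\nabla^2 K(z)|. \]
Since $|z|\geq |x|/2\geq 1$ on the relevant segment, the far-field bound on $K$ gives $|\nabla^2 K(z)| \leq C|x|^{-1-n+\delta}$. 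Using $\int f = \int y_i f = 0$ to kill the first two Taylor terms leaves
\[ |g_1(x)| \leq C|x|^{-1-n+\delta} \int_{|y|<|x|/2} |y|^2 (1+|y|)^{-2n+1/2}\,dy \leq C'|x|^{-1-n+\delta}, \]
the integral being uniformly bounded since $|y|^2 (1+|y|)^{-2n+1/2} \cdot |y|^{n-1}$ is integrable at infinity for $n\geq 2$.

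For $g_2$, on the annulus $|x|/2\leq |y|\leq 2|x|$ we have $|f(y)|\leq C|x|^{-2n+1/2}$, so I would split further by whether $|x-y|\leq 1$ or $|x-y|>1$ and apply the near- and far-field bounds for $K$ respectively; the $|x-y|\leq 1$ part contributes $C|x|^{-2n+1/2}$ and the $|x-y|>1$ part contributes $C|x|^{-2n+3/2+\delta}$ after integrating $|x-y|^{1-n+\delta}$ over a ball of radius $\sim|x|$. Both of these are dominated by $|x|^{-n-1+\delta}$ exactly when $n\geq 3$. For $g_3$, in the region $|y|>2|x|$ one has $|x-y|\sim |y|$, so $|K(x-y)|\leq C|y|^{1-n+\delta}$, and
\[ |g_3(x)|\leq C\int_{|y|>2|x|}|y|^{-3n+3/2+\delta}\,dy \leq C'|x|^{-2n+3/2+\delta}, \]
again dominated by $|x|^{-n-1+\delta}$ for $n\geq 3$. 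Combining the three pieces gives the desired estimate. The only real obstacle is the Taylor-remainder argument for $g_1$: one must be careful that both vanishing moments of $f$ are used, and that the quadratic remainder is controlled by the second-derivative bound on $K$ on the segment connecting $x$ to $x-y$, which is what forces the use of the far-field derivative estimates and the two vanishing moments rather than just one.
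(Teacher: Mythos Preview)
Your proof is correct and follows essentially the same three-region decomposition and Taylor-expansion argument as the paper, only with considerably more detail than the paper's sketch. One small imprecision worth tightening: in the $g_1$ term the integral is only over $|y|<|x|/2$, so the moment conditions do not literally kill the constant and linear Taylor terms---you must also bound $K(x)\int_{|y|\geq |x|/2} f$ and $\nabla K(x)\cdot\int_{|y|\geq |x|/2} yf$, but these tails are of order $|x|^{-2n+3/2+\delta}$ and are harmlessly absorbed into the $g_2,g_3$ estimates.
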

\begin{proof}
  We can expand $K(x-y)$ in a Taylor series around $y=0$, and the
  series will converge on the region $|y| < |x|/2$, say:
  \[ K(x-y) = K(x) - y_i\partial_i K(x) + O(|y|^2
    |x|^{-1-n+\delta}). \]
  We then have
  \[\begin{aligned}  g(x) &= \int_{|y| < |x|/2} f(y) K(x-y)\,dy + \int_{|x|/2 \leq |y|
  \leq 2|x|} f(y) K(x-y)\,dy \\ &\quad + \int_{|y| > 2|x|} f(y)
K(x-y)\,dy.
\end{aligned} \]
  In estimating the first integral we use the Taylor expansion of
  $K(x-y)$, while the other two integrals can be estimated directly.
\end{proof}


\subsection{The proof of Theorem~\ref{thm:Hinverse}}
In this section we will give the proof of
Theorem~\ref{thm:Hinverse}. The first step is to
solve the inhomogeneous problem in (\ref{h0}):

\begin{prop}\label{inhomogprop}
  Suppose that $u \in C^{0,\alpha}_1$
  is a symmetric two-tensor on $\mathbf{H}^{n+1}_+$ supported in $B_1 \times
  [0,\infty)$. Then there exists a symmetric two-tensor $h_0$ on
  $\mathbf{H}^{n+1}_+$ satisfying $Lh_0 = u$, and $h_0$ satisfies the
  estimate
  \[ \Vert h_0\Vert_{C^{2,\alpha}_1(A_{R-1,R})} \leq C (1+R)^{-2n+1/2}
    \Vert u\Vert_{C^{0,\alpha}_1}, \]
  for all $R > 0$.
\end{prop}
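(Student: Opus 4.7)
My plan is two-step: first construct $h_0$ on all of $\mathbf{H}^{n+1}$ by extending $u$ across $\{t=0\}$ and invoking Theorem~\ref{LeeThm}, and then extract the polynomial spatial decay on annuli from the off-diagonal behavior of the Green's function of $L$. For the first step I would extend $u$ to a symmetric $2$-tensor $\tilde u \in C^{0,\alpha}_1(\mathbf{H}^{n+1})$ with spatial support still contained in $B_1$ and temporal support in $[-1,\infty)$, satisfying $\|\tilde u\|_{C^{0,\alpha}_1} \leq C\|u\|_{C^{0,\alpha}_1}$; this is a standard Hölder extension operator followed by a cutoff in $t$. Since $\delta = 1$ lies in Lee's admissible range $(0,n)$, Theorem~\ref{LeeThm} produces a unique $h_0 \in C^{2,\alpha}_1(\mathbf{H}^{n+1})$ solving $L h_0 = \tilde u$, with $\|h_0\|_{C^{2,\alpha}_1}\leq C\|u\|_{C^{0,\alpha}_1}$. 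Restricting to $\{t\geq 0\}$ gives $Lh_0 = u$ and a uniform $C^{2,\alpha}_1$-bound on every annulus $A_{R-1,R}$, but without the polynomial factor in $R$.

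\textbf{Spatial decay.} On the exterior $\{|x| \geq 2\}$ the extended source $\tilde u$ vanishes, so $L h_0 = 0$ there. To capture the polynomial rate I would use the Green's function $G$ of $L$ on $\mathbf{H}^{n+1}$: the indicial roots of $L$ at infinity are $0$ and $n$, and consequently $G(z,z')$ decays like $e^{-n\,d_{\mathbf{H}}(z,z')}$ (up to polynomial factors in $d_{\mathbf{H}}$). For $z = (x,t)$ with $|x| = R$ large and $z' = (y,s)$ in the support of $\tilde u$, the half-space distance formula
\[ \cosh d_{\mathbf{H}}(z,z') = 1 + \tfrac{1}{2} e^{t+s}\bigl(|x-y|^2 + (e^{-t} - e^{-s})^2\bigr) \]
gives $d_{\mathbf{H}}(z,z') = 2\log R + t + s + O(1)$, hence $|G(z,z')|\leq C R^{-2n} e^{-n(t+s)}$. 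Integrating $G$ against $|\tilde u(y,s)|\leq C e^{-s}\|u\|_{C^{0,\alpha}_1}$ with volume form $e^{ns}\,ds\,dy$ yields
\[ |h_0(x,t)|\leq C R^{-2n} e^{-nt}\,\|u\|_{C^{0,\alpha}_1}, \]
so $\|h_0\|_{C^0_1(A_{R-1,R})} \leq C R^{-2n}\|u\|_{C^{0,\alpha}_1}$, which is comfortably stronger than the required $R^{-2n+1/2}$. Interior Schauder estimates applied to $L h_0 = 0$ on an enlarged annular region of bounded hyperbolic diameter around $A_{R-1,R}$ then upgrade this to the full $C^{2,\alpha}_1$ bound.

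\textbf{Main obstacle.} The principal difficulty is the quantitative off-diagonal decay of $G$, since Theorem~\ref{LeeThm} is only stated as an abstract isomorphism between weighted spaces. One route is to invoke the explicit resolvent kernel estimates from Chapter~5 of~\cite{LeeMemoirs}, which produce exactly the $e^{-n\,d_{\mathbf{H}}}$ decay dictated by the higher indicial root. An alternative that avoids kernel formulas is a maximum principle with barrier $\phi(x,t) = e^{-t}|x|^{-\beta}$ applied to the scalar $|h_0|^2_{g_{\mathbf{H}}}$: from $Lh_0 = 0$ and a Bochner-type identity one derives a subelliptic inequality for $|h_0|^2$, and then comparing with $M\phi$ on $\{|x|\geq 2\}$ (using the Step~1 $C^{2,\alpha}_1$-bound as Dirichlet data on $\{|x| = 2\}$) gives the required decay for any $\beta < 2n - 1/2$. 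The significant slack between the expected rate $R^{-2n}$ and the target $R^{-2n+1/2}$ leaves plenty of room for this softer argument, and I suspect the authors choose the less sharp route in order to bypass an explicit Green's function construction.
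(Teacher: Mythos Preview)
Your proposal is correct and follows essentially the same route as the paper: extend $u$ across $\{t=0\}$ (the paper uses reflection plus a cutoff, you use a H\"older extension plus a cutoff---either works), invoke Theorem~\ref{LeeThm} to produce $h_0$, and then deduce the spatial decay from the off-diagonal behavior of the resolvent. The paper's proof is terser, simply citing \cite[Proposition~5.2]{LeeMemoirs} for the decay rather than writing out the Green's function estimate you sketch, and in particular it does not use the barrier alternative you propose in your last paragraph.
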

\begin{proof}
  First by reflecting $u$ across the boundary of $\mathbf{H}^{n+1}_+$,
  and multiplying by a cutoff function, we extend $u$ to a tensor
  $\tilde{u}$ on all of hyperbolic space $\mathbf{H}^{n+1}$, with
  \[\Vert \tilde{u}\Vert_{C^{0,\alpha}_1(\mathbf{H}^{n+1})} \leq C
  \Vert u\Vert_{C^{0,\alpha}_1(\mathbf{H}^{n+1}_+)}.\]
  We can then apply Theorem~\ref{LeeThm} to obtain the required tensor
  $h_0$ on all of $\mathbf{H}^{n+1}$, and we simply restrict it to
  $\mathbf{H}^{n+1}_+$. The required decay of $h_0$ in the
  $x_i$-directions follows from the decay result \cite[Proposition
  5.2]{LeeMemoirs}.
\end{proof}

The next step in the proof of Theorem~\ref{thm:Hinverse}  is to let
$\eta = \beta_{\mathbf{H}}(h_0)|_{t=0}$, and try
using Proposition~\ref{prop:Hinverse2} to find $h_1$ such that
$Lh_1=0$, and $\beta_{\mathbf{H}}(h_1)|_{t=0} = \eta$. For this we
need to check the integral conditions \eqref{eq:inteta}, which are
equivalent to $\hat{\eta}(0) = \partial_i\hat{\eta}(0)=0$. This is where
the condition $I(u)=0$ enters, but we will need to further
adjust $h_0$ before these conditions hold. We
first have the following.

\begin{prop}
  Suppose that $u\in C^{0,\alpha}_1$ satisfies $I(u) =0$, and that
  $h_0$ satisfies $Lh_0=u$. Let $\eta =
  \beta_{\mathbf{H}}(h_0)|_{t=0}$. Then for small $\xi$ the components of the
  Fourier transform $\hat{\eta}$ satisfy
  \begin{equation}\label{eq:bian1}
        \hat{\eta}_i(\xi) = \xi_j A_{ij} + O(|\xi|^2),
  \end{equation}
  for a symmetric matrix $A_{ij}$.
\end{prop}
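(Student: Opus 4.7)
The plan is to work entirely on the Fourier side. Applying the scaled Fourier transforms \eqref{FT} to $h_0$ turns $Lh_0 = u$ into the ODE system \eqref{ODEh} in $t$ for each fixed $\xi$, and from \eqref{BianH} we have $\hat\eta_i(\xi) = B_\xi(H_0)_i|_{t=0}$. Since $h_0$ is Lee's inverse applied to a compactly supported extension of $u$, each slice $H_0(\xi,\cdot)$ depends smoothly on $\xi$ near the origin (the decay of $h_0$ in $x$ from Proposition~\ref{inhomogprop} is enough for $C^1$-dependence), so I Taylor expand $H_0 = H^{(0)}(t) + \xi_k H^{(k)}(t) + O(|\xi|^2)$ and substitute into \eqref{BianH} to obtain
\begin{align*}
\hat\eta_i(\xi) &= \bigl[(H^{(0)}_{i0})'(0) + (n+1)H^{(0)}_{i0}(0)\bigr] \\
&\quad + \xi_k\Bigl[(H^{(k)}_{i0})'(0) + (n+1)H^{(k)}_{i0}(0) - \sqrt{-1}\,H^{(0)}_{ik}(0) + \tfrac{1}{2}\sqrt{-1}\,\delta_{ik}\bigl(H^{(0)}_{pp}(0) + H^{(0)}_{00}(0)\bigr)\Bigr] + O(|\xi|^2).
\end{align*}
In parallel, by Fubini the hypothesis $I(u)=0$ translates on the Fourier side to $\int_0^\infty e^{-s} U_{j0}(\xi,s)\,ds = 0$ for all $\xi$, and the same manipulation with an extra $x_k$ gives the companion identity
\[ \int_0^\infty e^{-s} U^{(k)}_{j0}(s)\,ds = -\sqrt{-1}\int_{\mathbf{R}^n} x_k\Bigl[\int_0^\infty e^{-2s} u_{j0}(s,x)\,ds\Bigr] dx = 0. \]

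The crucial analytic step is the $\xi=0$ slice, where the system \eqref{ODEh} decouples. The $H^{(0)}_{j0}$-component satisfies
\[ (H^{(0)}_{j0})'' + n(H^{(0)}_{j0})' - (n+1)H^{(0)}_{j0} = U_{j0}(0,t), \]
whose characteristic roots $1$ and $-(n+1)$ have opposite signs, so the unique particular solution that decays at both $t \to \pm\infty$ (which is what Lee's inverse selects) is given by variation of parameters. A direct computation then shows that the contributions coming from the half-line $s<0$ cancel exactly in the linear combination $Y'(0) + (n+1)Y(0)$, leaving
\[ (H^{(0)}_{j0})'(0) + (n+1)H^{(0)}_{j0}(0) = -\int_0^\infty e^{-s} U_{j0}(0,s)\,ds, \]
independent of the extension of $u$ to $\{t<0\}$. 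This vanishes by the first reformulation of $I(u)=0$, giving $\hat\eta_i(0)=0$.

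For the linear term I would differentiate the $H_{j0}$-equation from \eqref{ODEh} in $\xi_k$ at $\xi=0$ to obtain
\[ (H^{(k)}_{j0})'' + n(H^{(k)}_{j0})' - (n+1)H^{(k)}_{j0} = 2\sqrt{-1}\,e^{-t}\bigl(\delta_{jk}H^{(0)}_{00} - H^{(0)}_{jk}\bigr) + U^{(k)}_{j0}(t), \]
and apply the same Green's-function identity. Substituting the result into the $\xi_k$-coefficient in the expansion of $\hat\eta_i$ above, the resulting expression for $A_{jk}$ is a sum of pieces each manifestly symmetric in $(j,k)$: the $\delta_{jk}$ pieces are automatic; the $H^{(0)}_{jk}$-terms are symmetric because $h_0$ is a symmetric $2$-tensor; and the $\int e^{-s} U^{(k)}_{j0}$-piece vanishes outright by the second reformulation of $I(u)=0$. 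Therefore $A_{jk}=A_{kj}$.

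The main obstacle is the bookkeeping in the variation-of-parameters computation: one must check that the boundary combination $Y'(0)+(n+1)Y(0)$ really collapses to $-\int_0^\infty e^{-s} f(s)\,ds$ independently of the extension of $u$. Once this identity is in hand, both the vanishing at $\xi=0$ and the symmetry of the linear coefficient follow from the same mechanism, with the hypothesis $I(u)=0$ entering in precisely the two roles demanded of it.
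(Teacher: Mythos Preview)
Your argument is correct and follows the same route as the paper: pass to the Fourier side, analyze the decoupled ODE at $\xi=0$ and its $\xi$-derivative, and apply the Green's-function identity (packaged in the paper as Lemma~\ref{lem:ODEintegral}) together with $I(u)=0$. The only cosmetic difference is that the paper forms the antisymmetric combination $S_{ij}=\partial_{\xi_i}H_{0,j0}-\partial_{\xi_j}H_{0,i0}\big|_{\xi=0}$, which makes the $H^{(0)}$-source terms cancel from the right-hand side before the lemma is applied, whereas you retain those terms and observe directly that they are symmetric in $(i,k)$; both organizations are valid.
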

\begin{proof}
  We need to show that $\hat{\eta}_i(0)=0$, and that the
  skew-symmetric part of the first derivative of $\hat{\eta}_i$
  vanishes at the origin, i.e.
  \[ \partial_{\xi_j} \hat{\eta}_i(0) - \partial_{\xi_i}
  \hat{\eta}_j(0)=0. \]

  Let us denote by $H_0(\xi, t)$ the Fourier transform of
  $h_0$ with additional exponential factors as before in
  Equation~\eqref{FT}. Similarly $U(\xi, t)$ is the Fourier transform
  of $u$ with additional exponential factors.
  The equation $Lh_0 = u$ then implies that $L_\xi
  H_0 = U$, where $L_\xi$ is the operator given by the left hand side
  of \eqref{ODEh}. In particular, the components $H_{0,j0}(0,t)$
  for $\xi=0$ satisfy the ODEs
  \[ H_{0,j0}'' + nH_{0,j0}' - (n+1)H_{0,j0} = U_{j0}. \]
  The condition $I(u)=0$ says that for all $x$ we have
  \begin{equation}\label{eq:intu1}
   \int_0^\infty u_{j0}(x,t) e^{-2t}\,dt =0,
  \end{equation}
  and so taking the Fourier transform, and letting $\xi=0$ we get
  \[ \int_0^\infty U_{j0}(0,t) e^{-t}\, dt =0, \]
  recalling that $U_{j0} = e^{-t}\hat{u_{j0}}$.
  Applying Lemma~\ref{lem:ODEintegral}, we find that for each $j$,
  \[ H_{0,j0}'(0,0) + (n+1)H_{0,j0}(0,0) =0. \]
  Using the formula \eqref{BianH} for the Fourier transform of the
  Bianchi operator, we then have
  \[ \hat{\eta}_j(0) = H_{0,j0}'(0,0) + (n+1)H_{0,j0}(0,0) = 0, \]
  as required.

  We next look at the first derivative of $\hat{\eta}$, and for this
  we differentiate the equation $L_\xi H_0 = U$ with respect to
  $\xi$. We only need certain components of the derivative, so let us
  define
\[ S_{ij}(t) = \partial_{\xi_i} H_{0,j0} - \partial_{\xi_j}
H_{0,i0} \Big|_{\xi = 0}. \]
Differentiating the equation $L_\xi H_0 = U$ with respect to $\xi$ and
then setting $\xi=0$, we obtain
\[  S_{ij}'' + nS_{ij}' - (n+1)S_{ij} = g_{ij}(t), \]
where
\[ g_{ij}(t) = \partial_{\xi_i}U_{j0} - \partial_{\xi_j}U_{i0}\Big|_{\xi=0}. \]
From the properties of the Fourier transform we have
\[ g_{ij}(t) = -\sqrt{-1} \int_{\mathbf{R}^n} x_j e^{-t}u_{i0}(x,t) - x_i
e^{-t}u_{j0}(x,t)\,dx , \]
and so \eqref{eq:intu1} for all $x$ implies
\[ \int_0^\infty g_{ij}(t) e^{-t}\,dt = 0. \]
Just as above, Lemma~\ref{lem:ODEintegral}
then implies that $S_{ij}'(0) + (n+1) S_{ij}(0) = 0$.

At the same time \eqref{BianH} implies that
\[ \begin{aligned} \partial_{\xi_j} \hat{\eta}_i - \partial_{\xi_i}
  \hat{\eta}_j\Big|_{\xi=0} &= S_{ij}'(0) + (n+1) S_{ij}(0) = 0,
\end{aligned} \]
which is what we wanted to prove.
\end{proof}

We used the following in the previous argument.
\begin{lem}\label{lem:ODEintegral}
 Suppose that $f: [0,\infty)\to \mathbf{R}$ is a decaying solution of
\[ f'' + nf' - (n+1)f = g. \]
  Then $f'(0) + (n+1)f(0) = 0$ if and only if
\[ \int_0^\infty g(s) e^{-s}\,ds = 0. \]
\end{lem}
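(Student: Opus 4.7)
The plan is to reduce the second-order ODE to a first-order one by factoring the differential operator. The characteristic polynomial $r^2 + nr - (n+1)$ factors as $(r-1)(r+n+1)$, so the operator $\partial_t^2 + n\partial_t - (n+1)$ factors as $(\partial_t - 1)(\partial_t + n+1)$. Setting $F(t) = f'(t) + (n+1) f(t)$, the equation $f'' + nf' - (n+1)f = g$ becomes the first-order equation $F' - F = g$. Crucially, the combination $f'(0) + (n+1) f(0)$ appearing in the statement is precisely $F(0)$.

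Next I would solve this first-order equation using the integrating factor $e^{-t}$: rewriting $F' - F = g$ as $(Fe^{-t})' = ge^{-t}$ and integrating from $0$ to $\infty$ yields
\[
\lim_{s\to\infty} F(s) e^{-s} - F(0) = \int_0^\infty g(s) e^{-s}\, ds.
\]
Since $f$ is assumed decaying (with $f, f' \to 0$ as $s \to \infty$), so is $F$, and in particular $F(s) e^{-s} \to 0$. This gives the single identity
\[
f'(0) + (n+1) f(0) = F(0) = -\int_0^\infty g(s) e^{-s}\, ds,
\]
from which the stated equivalence is immediate.

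There is essentially no obstacle here; the only point to be careful about is the justification of the vanishing of the boundary term $F(s)e^{-s}$ at infinity, which requires interpreting \emph{decaying} as providing enough control on $f$ and $f'$. In the applications, $f$ is a component of a Fourier-transformed tensor whose decay is inherited from the assumed decay of $h$ and $u$, so this is automatic. One may equivalently observe that $e^{-s}$ spans the kernel of the formal $L^2$-adjoint $\partial_t^2 - n\partial_t - (n+1)$ of the ODE operator, so the identity above is a direct integration-by-parts computation against this particular adjoint solution.
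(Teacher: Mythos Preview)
Your proof is correct and cleaner than the paper's. The paper proceeds by writing down the decaying fundamental solution
\[
\Gamma(t)=\begin{cases} ae^{t}, & t<0,\\ ae^{-(n+1)t}, & t>0, \end{cases}
\]
convolving to get a particular decaying solution $f(t)=\int g(s)\Gamma(t-s)\,ds$, and then computing $f'(0)+(n+1)f(0)=a(n+2)\int_0^\infty g(s)e^{-s}\,ds$; the ``if and only if'' follows because the decaying homogeneous solution $e^{-(n+1)t}$ already satisfies the boundary relation. Your approach instead factors the operator as $(\partial_t-1)(\partial_t+n+1)$, so that $F=f'+(n+1)f$ satisfies $F'-F=g$, and a single integration against the integrating factor $e^{-t}$ gives $F(0)=-\int_0^\infty g(s)e^{-s}\,ds$ directly. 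This is more elementary and explains structurally why the particular combination $f'(0)+(n+1)f(0)$ is the right boundary quantity, whereas the Green's function route is more generic machinery. Your remark that $e^{-s}$ lies in the kernel of the formal adjoint is also on point and gives a third way to see the identity via integration by parts. The only mild caveat you already flagged---that ``decaying'' must entail $F(s)e^{-s}\to 0$---is harmless here, since boundedness of $f,f'$ already suffices.
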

\begin{proof}
  The solutions of the homogeneous equation are $e^t,
  e^{-(n+1)t}$. Note that the decaying homogeneous
  solution $\phi(t) = e^{-(n+1)t}$
  satisfies $\phi'(0) + (n+1)\phi(0)=0$, and so it is enough to check
  the statement of the lemma for one particular solution.

  A decaying fundamental solution of the ODE is
  \[ \Gamma(t) = \begin{cases}
    ae^t\, \text{ if } t<0 \\
    ae^{-(n+1)t}, \text{ if } t > 0,
  \end{cases} \]
  for a suitable constant $a$, and so a decaying solution of the ODE
  is
  \[ f(t) = \int_{-\infty}^\infty g(s) \Gamma(t-s)\, ds. \]
  It follows that
  \[  \begin{aligned}
     f'(0) + (n+1)f(0) & = \int_{-\infty}^\infty g(s) \big[
     \Gamma'(-s) + (n+1)\Gamma(-s)\big]\, ds \\
    &= \int_0^\infty g(s) \big[ a e^{-s} + (n+1)a e^{-s}\big]\, ds \\
    &= a(n+2) \int_0^\infty g(s) e^{-s}\,ds.
  \end{aligned} \]
   The result follows.
\end{proof}

We are now ready to complete the proof of
Theorem~\ref{thm:Hinverse}. Consider again $\eta =
\beta_{\mathbf{H}}(h_0)|_{t=0}$ for the $h_0$ given by
Proposition~\ref{inhomogprop}. Using solutions of \eqref{ODEh}
for small $\xi$ satisfying \eqref{eq:solnsmallxi} we can find a
solution $\tilde{H}(\xi, t)$ of $L_\xi(\tilde{H}) =0$, vanishing for
$|\xi| > 1$, depending smoothly on $\xi$, and such that
\[ \hat{\eta} - B_\xi(\tilde{H})|_{t=0} = O(|\xi|^2). \]
The inverse Fourier transform $\tilde{h}$ of $\tilde{H}$ decays
exponentially fast (it is in the Schwarz space).
We can then apply Proposition~\ref{prop:Hinverse2} to find $h_1$
satisfying $Lh_1 = 0$, and
\[ \beta_{\mathbf{H}}(h_1)|_{t=0} = \eta -
\beta_{\mathbf{H}}(\tilde{h})|_{t=0}, \]
as well as the decay estimates \eqref{eq:hdecay}. We finally let
\[ h = h_0 - \tilde{h} - h_1. \]
This satisfies $Lh = Lh_0 = u$, the boundary condition
$\beta_{\mathbf{H}}(h) =0$, and the required decay estimates.

We also have the following improvement over
Proposition~\ref{prop:Hinverse2} when the only nonzero component of
$\eta$ is $\eta_0$.
\begin{prop}\label{prop:Hinverse3}
  Suppose that $f: \mathbf{R}^n\to\mathbf{R}$ is a $C^{1,\alpha}$ function supported
  in the unit ball $B_1$. There exists a symmetric two tensor $h\in
  C^{2,\alpha}_1(\mathbf{H}^{n+1}_+)$ satisfying
  \begin{enumerate}
    \item $Lh = 0$,
      \item $\beta(h)_i|_{t=0} = 0$ for all $i$ and $\beta(h)_0|_{t=0}
        = f$,
      \item $h$ satisfies the decay estimate
        \[ \Vert h\Vert_{C^{2,\alpha}_1}(A_{R-1,R}) <
        C'_k(1+R)^{-k} \Vert f\Vert_{C^{1,\alpha}}, \]
        for any $k>0$, and $C'_k$ depending on $k$.
   \end{enumerate}
\end{prop}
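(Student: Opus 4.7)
The plan is to construct $h$ as the inverse Fourier transform $h(x,t) = \mathcal{F}^{-1}[\hat f(\xi) H^*(\xi,t)](x,t)$, in the spirit of Proposition~\ref{prop:Hinverse2}. The new ingredient is that when the prescribed boundary data is $\mathbf{e}_{n+1}$ (only the $\beta_0$ component is nonzero), we can choose the Fourier-side solution $H^*(\xi,t)$ of $L_\xi H^* = 0$ to be smooth at $\xi = 0$, sidestepping the $|\xi|^{-2}$ singularity of \eqref{eq:HaPhi} that limited the decay in Proposition~\ref{prop:Hinverse2}. The super-polynomial decay of $h$ then follows from a convolution argument combined with integration by parts in $\xi$.

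The special role of $\mathbf{e}_{n+1}$ is visible already at $\xi = 0$: every decaying solution of the system \eqref{ODEh2} has $H_{i0}(t) \propto e^{-(n+1)t}$, and for such solutions $B_0(H)_i = H_{i0}' + (n+1)H_{i0}$ vanishes identically. Hence the image of $B_0$ on the $(n+1)(n+2)/2$-dimensional space of decaying solutions is exactly the line $\mathbf{C}\cdot\mathbf{e}_{n+1}$, which is compatible with a smooth extension $B_\xi(H^*)\equiv\mathbf{e}_{n+1}$ to a neighborhood of the origin. For $|\xi|$ small I construct $H^*$ as a formal power series in $\xi$ based at a decaying solution $H_0(t)$ of $L_0 H_0 = 0$ with $B_0(H_0) = \mathbf{e}_{n+1}$, built from the type II and III profiles of Section~\ref{SecHyp}. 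At each order, the coefficient $H^{(\alpha)}(t)$ is determined by an inhomogeneous form of \eqref{ODEh2} together with a boundary constraint at $t = 0$ that enforces $B_\xi(H^*) \equiv \mathbf{e}_{n+1}$ order by order; solvability at each order uses the full family of decaying solutions of types I--IV. Borel's lemma then realizes the series as a smooth $H^*$ on a neighborhood of the origin. For $|\xi|$ large I use the solutions from Proposition~\ref{prop:Hadecay}, and glue the two pieces via a smooth cutoff in $\xi$. The residual smooth error (flat at $\xi=0$ and Schwarz at infinity) is then corrected by one application of Proposition~\ref{prop:Hinverse2}, whose integral conditions \eqref{eq:inteta} hold automatically because flatness at $\xi = 0$ corresponds to the vanishing of all polynomial moments in physical space.

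The resulting $H^*$ is smooth in $\xi$ on all of $\mathbf{R}^n$ with bounds $|\partial_\xi^k H^*(\xi,t)| \leq C_k (1+|\xi|)^{-1-k} e^{-(n-1)t}$. Define $K(x,t) = \mathcal{F}^{-1}[H^*(\cdot,t)](x)$. For any $N > n-1$, integrating the Fourier inversion by parts $N$ times in $\xi$ gives $|K(x,t)| \leq C_N |x|^{-N} e^{-(n-1)t}$ for $|x| \geq 1$, because $\partial_\xi^N H^*$ is in $L^1(\mathbf{R}^n)$ in this range. Since $f$ is supported in $B_1$, we have $h(\cdot,t) = f \ast K(\cdot,t)$ in the $x$-variable, and for $|x|\geq 2$
\[ |h(x,t)| \leq \Vert f\Vert_{L^1} \sup_{|y|\leq 1} |K(x-y,t)| \leq C_N |x|^{-N} e^{-(n-1)t}\Vert f\Vert_{C^{1,\alpha}}, \]
giving pointwise decay in $x$ at any polynomial rate. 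Interior Schauder estimates on the annular regions $A_{R-1,R}$ upgrade this pointwise decay to the desired $C^{2,\alpha}_1$ bound, and the construction guarantees $Lh = 0$ and $\beta(h)|_{t=0} = (0,\ldots,0,f)$ by linearity of the Fourier transform.

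The main obstacle will be the small-$|\xi|$ construction: justifying the formal power series expansion of $H^*$ order by order, and checking that the Taylor coefficient required at each order does lie in the span of decaying solutions of types I--IV with the correct boundary compatibility enforcing $B_\xi(H^*) \equiv \mathbf{e}_{n+1}$. This is a careful iterative bookkeeping analogous to, but centered at a nontrivial background solution $H_0$, the constructions carried out in Section~\ref{SecHyp}.
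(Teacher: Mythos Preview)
Your central observation matches the paper's exactly: since $\mathbf{e}_{n+1}$ already lies in the image of $B_0|_{t=0}$ on the space of decaying solutions at $\xi=0$, one can build a Fourier-side solution $H^*(\xi,t)$ of $L_\xi H^*=0$ with $B_\xi(H^*)|_{t=0}=\mathbf{e}_{n+1}$ that is \emph{regular} at $\xi=0$, bypassing the $|\xi|^{-2}$ singularity of \eqref{eq:Habc}. The paper's proof is a one-line pointer to \eqref{eq:solnsmallxi} for precisely this reason.

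Where your plan diverges is in invoking Borel's lemma and then patching with Proposition~\ref{prop:Hinverse2}. Borel is unnecessary: the system \eqref{ODEh} depends \emph{polynomially} on $\xi$, so the type~I--IV solutions of Section~\ref{SecHyp} and the combined family \eqref{eq:solnsmallxi} are genuine analytic functions of $\xi$ near $\xi=0$, not merely formal series. One can therefore iterate \eqref{eq:solnsmallxi} directly---choosing smooth $a_{ij}(\xi),\,a(\xi)$ at each step to cancel the $O(|\xi|^{2k})$ boundary error---and obtain an honest smooth $H^*$ with $L_\xi H^*=0$ and $B_\xi(H^*)=\mathbf{e}_{n+1}$ exactly, with no residual to correct. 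This is what the paper's reference to \eqref{eq:solnsmallxi} is meant to convey.

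The Borel route, as you have written it, has a real gap. After Borel your $H^*$ satisfies $L_\xi H^*=0$ and $B_\xi(H^*)=\mathbf{e}_{n+1}$ only to infinite order at $\xi=0$, so there are two residuals: an interior error (the inverse transform of $L_\xi H^*$) and a boundary error. You propose to kill the boundary error with Proposition~\ref{prop:Hinverse2}, but that proposition as stated yields decay only of order $(1+R)^{-n-1+\delta}$, which would destroy the super-polynomial conclusion. It is true that flatness of $\widehat{\eta}$ at $\xi=0$ would, if fed back through the \emph{proof} of Proposition~\ref{prop:Hinverse2}, render the product $\widehat{\eta}(\xi)\,|\xi|^{-2}\Phi^a(\xi)$ smooth and hence give arbitrary polynomial decay---but you invoke the proposition as a black box, and you do not address the interior error at all. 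Dropping Borel in favor of the analytic dependence of the ODE on $\xi$ removes both issues at once and brings your argument in line with the paper's.
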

\begin{proof}
  The solution $h$ is constructed using the Fourier transform just
  like before, but for small $\xi$ only the solutions $H^a$ do not
  have a $|\xi|^{-1}$ singularity this time, as can be seen in
  \eqref{eq:solnsmallxi}. This translates to better decay properties
  of $h$ without the need for a condition like \eqref{eq:inteta}.
\end{proof}

\section{The linearized problem on $[0,\infty) \times M$}  \label{SecLinOp}

In this section we use Theorem~\ref{thm:Hinverse} to invert
the linearized operator on $M\times [0,\infty)$, at first modulo a
finite dimensional space. In this and subsequent sections we will need
to do some local calculations with respect to the warped product
metric
\[ g_{\epsilon} = dt^2 + e^{2t} \epsilon^{-2}  g_M.\]
In particular, $\{ x^i \}$ will denote local coordinates on $M$, and $\{ x^1, \dots, x^n, x^0 = t \}$ the corresponding coordinate system
 on $M \times [0,\infty)$.  We will use $a,b,c,\ldots$ for indices ranging
 from $0$ to $n$, and $i,j,k,\ldots$ for those ranging from 1 to $n$, as
 before. We will also use the obvious identifications between vector
 fields on $M$ and $[0,\infty)$ and their lifts to vector fields on
 the product manifold,
 usually without comment.  Recall the improved approximate
 solution
\[ g_\epsilon' = g_\epsilon + k^{(2)} + e^{-2t}\epsilon^2 k^{(4)}, \]
where $k^{(2)}, k^{(4)}$ are fixed tensors on $M$ expressed in terms of
$g_M$. Since $g_\epsilon$ is uniformly equivalent to $g_\epsilon'$, we
can use either of them to measure norms.

We now compute the Bianchi operator and the variation of the Ricci
curvature with respect to the metric $g_\epsilon$. The nonzero
Christoffel symbols are given by
  \[ \begin{aligned}
    \Gamma^i_{jk} &= \Gamma^i_{M, jk} \\
    \Gamma^0_{jk} &= -\epsilon^{-2} e^{2t} g_{M, jk} = -g_{jk} \\
    \Gamma^i_{0k} &= \Gamma^i_{k0}  =\delta^i_k,
  \end{aligned} \]
where $\Gamma^i_{M, jk}$ denote the Christoffel symbols of $g_M$.

The general formula for the variation of the Ricci curvature is
\begin{align} \label{DRicci}
 D\mathrm{Ric}_{g_\epsilon} (h) = -\frac{1}{2}\nabla^a\nabla_a h +
\delta_{g_\epsilon}^*\beta_{g_\epsilon}(h) + \mathcal{R}(h),
\end{align}
where
\[ \beta_{g_\epsilon}(h)_a = \nabla^b h_{ab} - \frac{1}{2}\nabla_a(
g_\epsilon^{bc}h_{bc}), \]
\[ \delta_{g_\epsilon}^*(\omega)_{ab} = \frac{1}{2}(\nabla_a \omega_b +
\nabla_b\omega_a), \]
and
\[ \mathcal{R}(h)_{cd} = -R^{a\,\,b}_{\,\,c\,\,d} h_{ab}
+\frac{1}{2}(R^a_c h_{ad} + R^{a_d}h_{ac}), \]
in terms of the curvature tensor of $g_\epsilon$.

We are particularly interested in the $j0$-component of the variation
of the Ricci curvature. For this we have the following formulas:
\[ \begin{aligned}
 \nabla^a\nabla_a h_{j0} &= \epsilon^2e^{-2t} g_M^{ik} \nabla_i^M\nabla_k^M
h_{j0} + \partial_0^2 h_{j0} + (n-2)\partial_0 h_{j0} - (4n+2) h_{j0}
\\
  &\quad + 2\partial_j h_{00} - 2\epsilon^2 e^{-2t} g_M^{ik} \nabla^M_k h_{ij},
\end{aligned} \]

\[\begin{aligned}
 2\delta_{g_\epsilon}^*\beta_{g_\epsilon}(h)_{j0} &= \epsilon^2e^{-2t} g_M^{ik} \nabla^M_j\nabla^M_k
 h_{0i} + \partial_0^2h_{j0} + (n-2)\partial_0 h_{j0} - 2n h_{j0} \\
 &\quad - \epsilon^2e^{-2t} \partial_j \partial_0(g_M^{ik} h_{ik}) +
 \epsilon^2e^{-2t} \partial_0( g_M^{ik} \nabla^M_k h_{ji}) \\
&\quad + 2\epsilon^2e^{-2t} \partial_j
 (g_M^{ik}h_{ik}) -4 \epsilon^2e^{-2t} g_M^{ik}\nabla^M_k h_{ji} +
 (n+1)\partial_j h_{00}.
\end{aligned} \]

The curvature of $g$ satisfies
\[ \begin{aligned}
  R^0_{\,\,jk0} &= \epsilon^{-2}e^{2t}g_{M,jk} \\
  R_{00} &= -n \\
  R_{jk} &= R^M_{jk} - ng_{jk},
\end{aligned} \]
where $R^M$ is the Ricci curvature of $g_M$.

\[\begin{aligned}
  \mathcal{R}(h)_{j0} = -(1+n)h_{j0} + \frac{1}{2} \epsilon^2 e^{-2t} g_M^{ik}
  R^M_{kj} h_{i0}.
\end{aligned} \]

Combining all of these we obtain
\begin{equation}\label{eq:DRiceq1}\begin{aligned}
 -2D(\mathrm{Ric}_{g_\epsilon} + n) h_{j0} &= \epsilon^2 e^{-2t}
 \Delta_M h_{j0} - \epsilon^2 e^{-2t}
 g_M^{ik} \nabla^M_j\nabla^M_k h_{i0} \\
&\quad+
\epsilon^2 e^{-2t}\partial_0 \partial_j(g_M^{ik}h_{ik})-2\epsilon^2 e^{-2t}\partial_j(g_M^{ik}h_{ik})
\\
&\quad -\epsilon^2 e^{-2t} \partial_0 g_M^{ik}\nabla^M_k h_{ji} + 2
\epsilon^2 e^{-2t}
g_M^{ik}\nabla_k h_{ji} \\
&\quad -(n-1)\partial_j h_{00},
\end{aligned} \end{equation}
where $\Delta_M$ denotes the Hodge Laplacian on 1-form on $(M,g_M)$.

Finally, for the Bianchi operator we have
\begin{equation}\label{eq:bhformula}
 \begin{aligned} \beta_{g_\epsilon}(h)_j &= \epsilon^2e^{-2t}g_M^{ik} \nabla^M_k h_{ij} +
  nh_{j0} + \partial_0 h_{j0} - \frac{1}{2} \epsilon^2e^{-2t}\partial_j(g_M^{ik}h_{ik}) -
\frac{1}{2}\partial_j h_{00} \\
  \beta_{g_\epsilon}(h)_0 &= \epsilon^2e^{-2t} g_M^{ik} \nabla^M_k h_{i0} +
  \frac{1}{2} \partial_0 h_{00} + nh_{00} - \frac{1}{2} \epsilon^2 e^{-2t} \partial_0 (g_M^{ik} h_{ik}).
\end{aligned} \end{equation}

\subsection{Preliminary results on fixing the boundary values}
The results in this section will allow us to make sure that our
solutions of the linearized problem satisfy the Bianchi condition on
the boundary.

\begin{prop}\label{prop:fixboundary}
  Suppose that $\eta$ is a section of  $T^*(M\times
  [0,\infty))|_{t=0}$, in $C^{1,\alpha}$. We can find a symmetric 2-tensor
  $h\in C^{2,\alpha}$ on $M\times [0,\infty)$, supported in $M\times
  [0,1]$, such that
  \[ \beta_{g_\epsilon}(h)|_{t=0} = \eta, \]
  and in addition $\Vert h\Vert_{C^{2,\alpha}_1} \leq C\Vert
  \eta\Vert_{C^{1,\alpha}_{\epsilon^{-2}g_M}}$ for a constant $C$ independent of
  $\epsilon$, once $\epsilon$ is sufficiently small.
\end{prop}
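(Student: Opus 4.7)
My plan is a direct explicit construction. Fix a smooth cutoff $\chi \in C^\infty_c([0,1))$ with $\chi(0)=1$ and $\chi'(0)=0$, and let $\tilde\eta$ be an extension of $\eta$ into $M\times [0,\infty)$ (to be specified below). I would then take
\[
h \;=\; 2t\,\chi(t)\,\tilde\eta \odot dt.
\]
In components this is $h_{ij}\equiv 0$ for $i,j\geq 1$, $h_{j0}=t\chi(t)\tilde\eta_j$, and $h_{00}=2t\chi(t)\tilde\eta_0$, so all components of $h$ vanish at $t=0$, while
\[
\partial_0 h_{j0}\big|_{t=0}=\eta_j, \qquad \partial_0 h_{00}\big|_{t=0}=2\eta_0.
\]
The support condition is automatic.

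To verify the boundary identity I substitute into \eqref{eq:bhformula}. Every term involving $h_{ij}$ or $\partial h_{ij}$ is identically zero, while every term that contains an undifferentiated factor of $h_{j0}$ or $h_{00}$ vanishes at $t=0$ because of the $t$-prefactor. Similarly $\partial_j h_{00}|_{t=0}=0$ since $h_{00}$ vanishes along the whole slice $\{t=0\}$. The only surviving contributions are $\partial_0 h_{j0}|_{t=0}$ in $\beta_{g_\epsilon}(h)_j$ and $\tfrac12 \partial_0 h_{00}|_{t=0}$ in $\beta_{g_\epsilon}(h)_0$, giving $\beta_{g_\epsilon}(h)|_{t=0}=\eta$ as required.

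The remaining work is the $C^{2,\alpha}_1$ bound. The naive choice $\tilde\eta(x,t)=\eta(x)$ does not work, because taking two spatial derivatives of $h_{j0}$ would require two spatial derivatives of $\eta$, and $\eta$ is only $C^{1,\alpha}$. Instead I would choose $\tilde\eta$ to be a Poisson-type (or heat-semigroup) extension of $\eta$ from $(M,\epsilon^{-2}g_M)$ into $M\times[0,\infty)$, which is smooth for $t>0$ and satisfies the interior Schauder-type bound $|\nabla_x^2\tilde\eta(\cdot,t)|\leq C\,t^{\alpha-1}\,[\eta]_{C^{1,\alpha}}$. The factor of $t$ in the ansatz absorbs this singularity so that, e.g., $|\nabla_x^2 h|\leq Ct^\alpha[\eta]_{C^{1,\alpha}}$, and a similar analysis of the mixed and purely tangential derivatives shows $h \in C^{2,\alpha}$ up to the boundary, with norm controlled by $\|\eta\|_{C^{1,\alpha}}$ of the extension. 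All other derivatives of $h$ use at most one spatial derivative of $\eta$ together with the smooth profile $t\chi(t)$, so they are controlled directly.

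The main obstacle is keeping the $C^{2,\alpha}_1$ bound uniform in $\epsilon$. This is essentially bookkeeping: each spatial derivative in the $g_\epsilon$-orthonormal frame carries a factor $\epsilon e^{-t}$, which on the compact support $t\in[0,1]$ matches the $\epsilon$-scaling of derivatives in $\|\eta\|_{C^{1,\alpha}_{\epsilon^{-2}g_M}}$. Choosing the Poisson extension with respect to the rescaled metric $\epsilon^{-2}g_M$ makes the powers of $\epsilon$ align automatically, and the constant in the resulting estimate depends only on $\chi$ and the fixed topology of $M$, not on $\epsilon$.
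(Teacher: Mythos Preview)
Your construction is correct and genuinely different from the paper's. Both proofs face the same obstacle: the naive ansatz $h_{j0}=t\eta_j$, $h_{00}=2t\eta_0$ would require two spatial derivatives of $\eta$, which is only $C^{1,\alpha}$. Each proof gains the missing derivative, but by a different mechanism. The paper solves the auxiliary elliptic equation $(\Delta_{\epsilon^{-2}g_M}-1)\omega=\eta$ on $M$ (Lemma~\ref{lem:laplaceinverse}) to produce $\omega,f\in C^{3,\alpha}$, builds a first approximation $\tilde h$ from $\nabla^M\omega$ and $\nabla^M f$, and then observes that the defect $\tau=\beta_{g_\epsilon}(\tilde h)|_{t=0}-\eta$ lands in $C^{2,\alpha}$, so the naive $t\tau$ ansatz handles the remainder. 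You instead smooth $\eta$ in the $t$-direction via a Poisson-type extension and let the prefactor $t$ absorb the $t^{\alpha-1}$ interior blow-up of second derivatives.

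Your route is more direct (one step, no correction term) and sidesteps the need for Lemma~\ref{lem:laplaceinverse}, whose proof in the paper is itself a nontrivial patching argument to get constants uniform in $\epsilon$. On the other hand, the paper's route is more self-contained: once Lemma~\ref{lem:laplaceinverse} is in hand, the $C^{2,\alpha}$ bound for $h$ is immediate, whereas your argument relies on the fact that $t\cdot(\text{harmonic extension of a }C^{1,\alpha}\text{ datum})$ lies in $C^{2,\alpha}$ up to the boundary. That fact is true but you have only verified boundedness of $\nabla^2h$, not its H\"older continuity; the missing step is the standard interior-gradient-plus-dyadic argument, and you should also note that $\partial_t^2 h$ contains the term $2(t\chi)'\partial_t\tilde\eta$, which needs $\partial_t\tilde\eta\in C^{0,\alpha}$ (equivalently, that the Dirichlet-to-Neumann map sends $C^{1,\alpha}\to C^{0,\alpha}$). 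Finally, your claim that taking the extension with respect to $\epsilon^{-2}g_M$ makes the constants uniform in $\epsilon$ is correct in spirit but deserves a sentence: on $(M,\epsilon^{-2}g_M)$ the geometry on unit balls is uniformly close to Euclidean, so the Poisson-semigroup estimates localize with $\epsilon$-independent constants --- which is, in effect, the same observation that drives the paper's proof of Lemma~\ref{lem:laplaceinverse}.
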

\begin{proof}   The form $\eta$ decomposes as $\eta = \eta_i dx^i + \eta_0 dt$, where
  $\eta_i dx^i$ is a one form on $M$ and $\eta_0$ is a function on
  $M$. Using Lemma~\ref{lem:laplaceinverse} below, we can find a
  one-form $\omega_i$ and a function $f$ on $M$ such that
  \[ \begin{aligned}
      \Delta_{\epsilon^{-2} g_M} \omega_j - \omega_j &= \eta_j \\ 
      \Delta_{\epsilon^{-2} g_M} f - f &= \eta_0, 
  \end{aligned} \]
  where $\Delta_{\epsilon^{-2} g_M} = \epsilon^2 g_M^{ik} \nabla^M_i\nabla^M_k$ denotes the rough laplacian.  In addition, we have the estimates
  \begin{align} \label{omsize}
  \Vert  \omega\Vert_{C^{3,\alpha}_{\epsilon^{-2}g_M}} + \Vert f \Vert_{C^{3,\alpha}_{\epsilon^{-2}g_M}} &\leq C\Vert
  \eta\Vert_{C^{1,\alpha}_{\epsilon^{-2}g_M}}.
   \end{align}
  Define the 2-tensor $\tilde{h}$ on $M \times [0,\infty)$ by setting
  \[ \begin{aligned}
      \tilde{h}_{ij} &= \nabla^M_i \omega_j + \nabla^M_j \omega_i, \\
      \tilde{h}_{i0} &= \nabla^M_i f, \\
      \tilde{h}_{00} &= 0.
  \end{aligned} \]

  We have $\Vert \tilde{h}\Vert_{C^{2,\alpha}_1} \leq C \Vert
  \eta\Vert_{C^{1,\alpha}_{\epsilon^{-2}g_M}}$, and applying the Bianchi operator of
  $g_\epsilon$ according to \eqref{eq:bhformula},
  \[ \begin{aligned}
    \beta_{g_\epsilon}(\tilde{h})_j|_{t=0} &= \epsilon^2
    g_M^{ik}\nabla^M_k\tilde{h}_{ij} + n\tilde{h}_{j0} + \partial_0 \tilde{h}_{j0} -
    \frac{1}{2} \epsilon^2 g_M^{ik} \nabla^M_j \tilde{h}_{ik} \\
  &= \epsilon^2 g_M^{ik}\nabla^M_k(\nabla^M_i \omega_j +
  \nabla^M_j\omega_i) -\epsilon^2 g_M^{ik}\nabla^M_j\nabla^M_k\omega_i
  + n\nabla^M_j f \\
  &= \eta_j + \omega_j + \epsilon^2 g_M^{ik}\mathrm{Ric}^M_{kj}
  \omega_i + n \nabla^M_j f \\
   \beta_{g_\epsilon}(\tilde{h})_0|_{t=0} &= \epsilon^2 g_M^{ik}
   \nabla^M_k \tilde{h}_{i0} \\
   &= \epsilon^2 g_M^{ik}\nabla^M_k \nabla^M_i f \\
   &= \eta_0 + f.
  \end{aligned} \]
  It follows that if we write $\tau = \beta_{g_\epsilon}(\tilde{h})|_{t=0} - \eta$,
  then by (\ref{omsize})
  \[ \Vert \tau\Vert_{C^{2,\alpha}_{\epsilon^{-2}g_M}} \leq C
  \Vert\eta\Vert_{C^{1,\alpha}_{\epsilon^{-2}}}. \]
  We now define the 2-tensor $k$ on $M\times[0,\infty)$ by
  \[ \begin{aligned}
      k_{j0} &= t\tau_j \\
      k_{00} &= 2t\tau_0.
  \end{aligned} \]
  We have $\Vert k\Vert_{C^{2,\alpha}_1}\leq C\Vert
  \eta\Vert_{C^{1,\alpha}_{\epsilon^{-2}}}$ and in addition
  $\beta_{g_\epsilon}(k)|_{t=0} = \tau$  (note that $\tau$ vanishes
  when $t=0$, and so only the terms involving a $t$-derivative
  survive).

  Finally we define
  \[ h = \chi\cdot(\tilde{h} - k), \]
  where $\chi=\chi(t)$ is a cutoff function such that $\chi(t)=1$ for
  $t < 1/2$, and $\chi(t)=0$ for $t > 1$. Then $h$ is supported in
  $M\times[0,1]$, it satisfies the required $C^{2,\alpha}$ estimate
  since $\tilde{h}$ and $k$ do, and by construction it satisfies
  $\beta_{g_\epsilon}(h)|_{t=0} = \eta$.

\end{proof}

We have used the following result in the previous argument.
\begin{lem}\label{lem:laplaceinverse}
  Let $(M,g_M)$ be compact.
  For sufficiently small $\epsilon > 0$, and any $i$, the linear
  map $D: C^{3,\alpha}_{\epsilon^{-2} g_M}(\Omega^i (M)) \to C^{1,\alpha}_{\epsilon^{-2} g_{M}}(\Omega^i (M))$ given by
  \begin{align*}
  D:  \alpha \mapsto \Delta_{\epsilon^{-2} g_M } \alpha - \alpha,
  \end{align*}
  where $\Delta_{\epsilon^{-2} g_M} =  \epsilon^2 g_M^{jk} \nabla_j \nabla_k$ is the rough laplacian, is invertible.  Moreover, the inverse is bounded independently of $\epsilon$.
\end{lem}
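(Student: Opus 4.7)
Set $\tilde g = \epsilon^{-2} g_M$; the operator $D$ is then precisely $\Delta_{\tilde g} - 1$, the rough Laplacian of $\tilde g$ minus the identity, and the norms $C^{k,\alpha}_{\epsilon^{-2}g_M}$ are the intrinsic H\"older norms on $(M,\tilde g)$. The plan is to establish injectivity via a maximum-principle argument, obtain a uniform Schauder estimate, and deduce surjectivity from self-adjointness.

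For injectivity (and an $L^\infty$ bound) I would use the curvature-free Bochner identity for the rough Laplacian,
\[ \Delta_{\tilde g} |\alpha|^2_{\tilde g} = 2|\nabla \alpha|^2_{\tilde g} + 2\langle \alpha, \Delta_{\tilde g}\alpha\rangle_{\tilde g}. \]
At an interior maximum of $|\alpha|^2_{\tilde g}$ on the compact manifold $M$, the left-hand side is nonpositive, which gives $\langle \alpha, D\alpha\rangle_{\tilde g} + |\alpha|^2_{\tilde g} \le 0$ at that point; Cauchy--Schwarz then yields $\|\alpha\|_{L^\infty} \le \|D\alpha\|_{L^\infty}$. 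In particular $\ker D = 0$, and this provides the $L^\infty$ part of the inverse estimate.

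To upgrade this to a uniform Schauder estimate, I would exploit the fact that in the metric $\tilde g$ the sectional curvature equals $\epsilon^2\,\mathrm{Sec}(g_M)$ and the injectivity radius is $\epsilon^{-1}\,\mathrm{inj}(M,g_M)$, so $(M,\tilde g)$ has uniformly bounded geometry at unit scale and in fact becomes arbitrarily close to Euclidean on balls of $\tilde g$-radius $1$. In $\tilde g$-normal coordinates on such a ball the Christoffel symbols of $\tilde g$ are $O(\epsilon^2)$, so $D$ is a small perturbation of the constant-coefficient operator $\delta^{jk}\partial_j\partial_k - 1$ acting on the trivialized form bundle. Applying standard interior Schauder estimates on each chart and covering $M$ by finitely many such charts (the number depends on $\epsilon$, but the local constant does not) gives
\[ \|\alpha\|_{C^{3,\alpha}_{\tilde g}} \le C\bigl(\|D\alpha\|_{C^{1,\alpha}_{\tilde g}} + \|\alpha\|_{L^\infty}\bigr) \]
with $C$ independent of $\epsilon$ once $\epsilon$ is small. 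Combined with the $L^\infty$ bound, this yields the uniform inequality $\|\alpha\|_{C^{3,\alpha}_{\tilde g}} \le C\|D\alpha\|_{C^{1,\alpha}_{\tilde g}}$.

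For surjectivity, $D$ is formally self-adjoint on $L^2(\Omega^i(M),\tilde g)$, and integration by parts gives $\langle -\Delta_{\tilde g}\alpha, \alpha\rangle = \|\nabla \alpha\|^2_{L^2} \ge 0$, so $D$ has $L^2$-spectrum in $(-\infty,-1]$ and is bijective on $L^2$ with $\|D^{-1}\|_{L^2 \to L^2} \le 1$. Elliptic regularity in the form of the Schauder estimate just established then promotes the $L^2$-inverse to the desired bounded map $C^{1,\alpha}_{\tilde g} \to C^{3,\alpha}_{\tilde g}$, uniformly in $\epsilon$. The main obstacle is verifying that the Schauder constant is genuinely uniform in $\epsilon$, and this is precisely the point of passing to $\tilde g$: in the original metric $g_M$ the operator $\epsilon^2 \Delta_{g_M} - 1$ degenerates as $\epsilon \to 0$, but in the rescaled metric the geometry is uniformly controlled at unit scale.
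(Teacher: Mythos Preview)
Your proof is correct and takes a genuinely different route from the paper. The paper constructs an explicit approximate inverse by a cutting-and-pasting argument: it covers $(M,\epsilon^{-2}g_M)$ by unit balls, uses a partition of unity to localize, solves the Euclidean equation $\Delta_0 h_i - h_i = \gamma_i u$ on $\mathbf{R}^n$ (exploiting the exponential decay of that Green's function), cuts off at radius $R$, and sums the pieces. The error from the cutoffs and from the discrepancy between $\epsilon^{-2}g_M$ and the Euclidean metric is controlled by $R^n(\epsilon^2 R^2 + R^{-d})$, which can be made less than $1/2$, yielding an approximate inverse $F$ and hence a true inverse $F(DF)^{-1}$.

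Your approach instead combines a maximum-principle $L^\infty$ estimate, a uniform local Schauder estimate coming from bounded geometry of $(M,\epsilon^{-2}g_M)$, and $L^2$ self-adjointness for existence. This is more elementary and direct for this particular lemma: no gluing, no tracking of error terms across scales. The paper's method, on the other hand, is deliberately chosen to mirror the gluing construction used later for the main linearized operator $L_{g_\epsilon',g_\epsilon}$ (Proposition~\ref{prop:inverse1}), where there is no analogous maximum principle or self-adjointness available; in that sense the paper is rehearsing the technique on an easier model. Both arguments rely on the same underlying fact that the rescaled geometry is nearly Euclidean at unit scale, but they package it differently.
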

\begin{proof}
  We will write down an approximate inverse for $D$. We cover $M$ with
  unit balls with respect to the metric $\epsilon^{-2}g_M$, and let
  $\gamma_1,\ldots, \gamma_{N_\epsilon}$ be a partition of unity subordinate to
  this cover. We have $N_\epsilon = O(\epsilon^{-n})$, and we can
  assume that all derivatives of the $\gamma_i$ are uniformly
  bounded. Given $u\in C^{1,\alpha}_{\epsilon^{-2}g_M}(\Omega^i(M))$,
  we write
  \[ u = \sum_{i=1}^{N_\epsilon} \gamma_i u. \]
  Using normal coordinates in each ball, we view $\gamma_i u$ as a
  tensor on $\mathbf{R}^n$ supported in the unit ball.
  On $\mathbf{R}^n$ we can solve the equation
  \[ \Delta_0 h_i - h_i = \gamma_i u, \]
  where $\Delta_0$ denotes the Euclidean Laplacian. Moreover, the
  solution decays in $C^\infty$ faster than any polynomial: for $|x| >
  1$ and any $k,d$, we have
  \[ |\partial_x^k h_i(x)| \leq C_{k,d} |x|^{-d} \Vert
  u\Vert_{C^{1,\alpha}_{\epsilon^{-2}g_M}},\]
  since the Green's function of the operator $\Delta_0 - 1$ on
  $\mathbf{R}^n$ decays exponentially fast (as can be seen using the
  Fourier transform for instance).

  We can now reassemble these local solutions $h_i$ as follows. We fix
  a radius $R > 2$, and let $\chi_R$ denote a cutoff function
  supported in $B_R(0)$, and equal to 1 in $B_{R-1}(0)$. By the
  decay of $h_i$ we have
  \begin{equation} \label{eq:chiRdelta} \Vert \Delta_0(h_i - \chi_R h_i)
  \Vert_{C^{1,\alpha}_{\epsilon^{-2}g_M}} \leq C_d R^{-d} \Vert
  u\Vert_{C^{1,\alpha}_{\epsilon^{-2}g_M}} \end{equation}
  for any $d > 0$.

  Once
  $\epsilon$ is sufficiently small, we can use normal coordinates to
  view each $\chi_R h_i$ as a tensor on $M$, supported in an
  $R$-ball. On such an $R$-ball, if we compare $\epsilon^{-2}g_M$ with
  the Euclidean metric $\delta_{ij}$ in normal coordinates, we have
  \[ \Vert \epsilon^{-2}g_{M, ij} - \delta_{ij} \Vert_{C^k} =
  O(\epsilon^2 R^2), \]
  and so
  \[ \Vert (\Delta_{\epsilon^{-2}g_M} - \Delta_0) (\chi_R
  h_i)\Vert_{C^{1,\alpha}_{\epsilon^{-2}g_M}} \leq C\epsilon^2 R^2
    \Vert u\Vert_{C^{1,\alpha}_{\epsilon^{-2}g_M}}. \]
  Combining this with \eqref{eq:chiRdelta} we obtain
  \[ \Vert \Delta_{\epsilon^{-2}g_M} (\chi_R h_i) - \chi_R h_i -
  \gamma_i u\Vert_{C^{1,\alpha}_{\epsilon^{-2}g_M}} \leq
  C_d(\epsilon^2R^2 + R^{-d}) \Vert
  u\Vert_{C^{1,\alpha}_{\epsilon^{-2}g_M}}. \]
  We now define
  \[ h = \sum_{i=1}^{N_\epsilon} \chi_R h_i, \]
  and estimate the error
 \begin{equation}\label{eq:errorest1}  \begin{aligned}
\Vert \Delta_{\epsilon^{-2}g_M} h - h - u
\Vert_{C^{1,\alpha}_{\epsilon^{-2}g_M}} &\leq R^n C_d(\epsilon^2R^2 +
R^{-d}) \Vert u\Vert_{C^{1,\alpha}_{\epsilon^{-2}g_M}}.
\end{aligned} \end{equation}
  In this estimate we used the fact that each $\chi_R h_i$ is
  supported on an $R$-ball, and so at each point of $M$, the number of
  terms that contribute is of order $R^n$. It is now clear that if we
  choose $d > n$, then $R$ sufficiently large, and finally $\epsilon$
  sufficiently small, we can ensure that
  \[
\Vert \Delta_{\epsilon^{-2}g_M} h - h - u
\Vert_{C^{1,\alpha}_{\epsilon^{-2}g_M}} \leq \frac{1}{2}\Vert u\Vert_{C^{1,\alpha}_{\epsilon^{-2}g_M}},
  \]
  and so the map $F: u \mapsto h$ that we defined is an approximate
  inverse for the linear operator $D$. In particular $DF$ is
  invertible, and $F(DF)^{-1}$ is the required inverse for $D$.
\end{proof}

We will also need the following, which allows us to correct the
boundary values when they only contain a $dt$ component.

\begin{prop}\label{prop:Binverse}
  Let $f \in C^{1,\alpha}(M,\mathbf{R})$. We can find $h\in
  C^{2,\alpha}_1(S^2)$ satisfying
  \begin{enumerate}
    \item $\Vert h\Vert_{C^{2,\alpha}_1} \leq C\Vert
      f\Vert_{C^{1,\alpha}_{\epsilon^{-2}g_M}}$,
    \item We have the boundary condition
      \[ \begin{aligned} \beta_{g_\epsilon}(h)_j|_{t=0} &= 0, \text{
          for all } j\\
        \beta_{g_\epsilon}(h)_0|_{t=0} &= f
        \end{aligned} \]
        \item $h$ is approximately in the kernel of $L_{g_{\epsilon}',g_{\epsilon}}$,
          in the sense that for any $\delta > 0$ there is a $C_\delta$
          such that
          \begin{equation}\label{eq:Lhest2} \Vert L_{g_{\epsilon}',g_{\epsilon}} h\Vert_{C^{0,\alpha}_1} \leq
          C_\delta\epsilon^{2-\delta}\Vert
          f\Vert_{C^{1,\alpha}_{\epsilon^{-2}g_M}}.
          \end{equation}
  \end{enumerate}
\end{prop}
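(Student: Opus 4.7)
\medskip

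\noindent\textbf{Proof Proposal.} The plan is to exploit Proposition~\ref{prop:Hinverse3}, which provides rapidly decaying kernel elements of $L_{\mathbf{H}}$ on hyperbolic space whose Bianchi boundary data has only a $dt$-component. We localize $f$, solve the model problem in each piece, and then transplant and patch these solutions onto $M\times[0,\infty)$.

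First I would fix a covering of $M$ by balls of $(\epsilon^{-2}g_M)$-radius $1$ (there are $N_\epsilon = O(\epsilon^{-n})$ of them), with a subordinate partition of unity $\{\gamma_i\}$, and decompose $f = \sum_i \gamma_i f$. For each $i$, using $(\epsilon^{-2}g_M)$-normal coordinates centered at the ball's center, I would view $\gamma_i f$ as a $C^{1,\alpha}$ function on $\mathbf{R}^n$ supported in the unit ball, and apply Proposition~\ref{prop:Hinverse3} to obtain a symmetric $2$-tensor $h_i$ on $\mathbf{H}^{n+1}_+$ with $L_{\mathbf{H}} h_i = 0$, with $\beta_{\mathbf{H}}(h_i)|_{t=0}$ equal to $\gamma_i f$ in the $dt$-component and zero in the spatial components, and with decay faster than any polynomial in the $x$-direction. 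Setting $R = R(\epsilon) = \epsilon^{-\delta'}$ for a small $\delta' > 0$ to be chosen, I would introduce a spatial cutoff $\chi_R$ supported in the $R$-ball and identically $1$ on the $(R-1)$-ball, transplant each $\chi_R h_i$ back to $M\times[0,\infty)$ via the coordinate chart, and define the approximate solution
\[ h^{(0)} = \sum_{i=1}^{N_\epsilon} \chi_R h_i. \]

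Next I would estimate $L_{g_\epsilon',g_\epsilon} h^{(0)}$ by splitting the error into three contributions on each ball: (a) the deviation of $\epsilon^{-2}g_M$ from the Euclidean metric in normal coordinates, which is $O(\epsilon^2 R^2)$ on a ball of $(\epsilon^{-2}g_M)$-radius $R$, giving an operator discrepancy $L_{g_\epsilon} - L_{\mathbf{H}} = O(\epsilon^2 R^2)$; (b) the commutator $[L_{\mathbf{H}}, \chi_R] h_i$, which is supported in the annulus $A_{R-1,R}$ and is $O(R^{-k})$ for any $k$ by the rapid decay in Proposition~\ref{prop:Hinverse3}; and (c) the difference $L_{g_\epsilon',g_\epsilon} - L_{g_\epsilon,g_\epsilon}$, which is $O(\epsilon^2)$ because $g_\epsilon' - g_\epsilon = k^{(2)} + \epsilon^2 e^{-2t} k^{(4)}$ has $g_\epsilon$-norm $O(\epsilon^2)$. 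At any given point of $M$, only $O(R^n)$ of the $\chi_R h_i$ contribute, so the total is
\[ \Vert L_{g_\epsilon',g_\epsilon} h^{(0)}\Vert_{C^{0,\alpha}_1} \leq C\bigl(\epsilon^2 R^{n+2} + R^{n-k}\bigr) \Vert f\Vert_{C^{1,\alpha}_{\epsilon^{-2}g_M}}. \]
Choosing $\delta' = \delta/(n+2)$ and $k$ sufficiently large reduces this to $C_\delta\,\epsilon^{2-\delta}\Vert f\Vert$. By the same mechanism, $\beta_{g_\epsilon}(h^{(0)})|_{t=0}$ differs from the target data $(0,\ldots,0,f)$ by an error of the same order.

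Finally, I would feed this small boundary error into Proposition~\ref{prop:fixboundary} to produce a correction $h^{\mathrm{corr}}$ supported in $M\times[0,1]$ with $\Vert h^{\mathrm{corr}}\Vert_{C^{2,\alpha}_1} \leq C\epsilon^{2-\delta}\Vert f\Vert$ and with $\beta_{g_\epsilon}(h^{(0)}+h^{\mathrm{corr}})|_{t=0}$ equal to the prescribed data exactly. Since $L_{g_\epsilon',g_\epsilon}$ is a second-order operator with uniformly bounded coefficients (in the natural $g_\epsilon$-norms), we also have $\Vert L_{g_\epsilon',g_\epsilon} h^{\mathrm{corr}}\Vert_{C^{0,\alpha}_1} \leq C\epsilon^{2-\delta}\Vert f\Vert$, which does not spoil property (3). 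Setting $h = h^{(0)} + h^{\mathrm{corr}}$ yields the tensor required by the proposition. The main obstacle is the bookkeeping in the error estimate: tracking how the per-ball error $O(\epsilon^2 R^2)$, the multiplicity $O(R^n)$, and the tail error $O(R^{-k})$ interact, so that a single choice of $R = R(\epsilon)$ simultaneously controls the bulk error, the cutoff tail, and the boundary-value error, while still allowing Proposition~\ref{prop:fixboundary} to be applied without destroying the approximate kernel property.
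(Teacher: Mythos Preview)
Your proposal is correct and follows essentially the same route as the paper: localize $f$ via a partition of unity on $(M,\epsilon^{-2}g_M)$, apply Proposition~\ref{prop:Hinverse3} in each chart, cut off at radius $R_\epsilon=\epsilon^{-\tau}$, sum (using the $O(R_\epsilon^n)$ overlap bound and the rapid spatial decay), and then fix the small residual boundary error with Proposition~\ref{prop:fixboundary}. The paper's proof is slightly terser in that it refers back to the error bookkeeping in Lemma~\ref{lem:laplaceinverse} rather than spelling out your decomposition (a)--(c), but the mechanism and the resulting estimate $R_\epsilon^n(\epsilon^2R_\epsilon^2+R_\epsilon^{-d})$ are identical.
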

\begin{proof}
 This result
should be compared with Proposition~\ref{prop:fixboundary}, where
arbitrary boundary values are allowed, but this comes at the cost of a
worse estimate for $L_{g_\epsilon',g_{\epsilon}}h$. The proof is
similar to the preceding proof, using the local result
Proposition~\ref{prop:Hinverse3}. As in the previous proof we write
\[ f = \sum_{i=1}^{N_\epsilon} \gamma_i f, \]
and apply Proposition~\ref{prop:Hinverse3} to each $\gamma_i f$. We
obtain $h_i$ satisfying $L_0(h_i)=0$, and $\beta_0(h_i)_0|_{t=0}=f$,
emphasizing that we are using the Euclidean operators $L_0$ and
$\beta_0$ here. We define
\[ \tilde{h} = \sum_{i=1}^{N_\epsilon} \chi_{R_\epsilon} h_i \]
as above, but we now allow the radius $R_\epsilon$ to depend on
$\epsilon$. Let us write $b$ for the tensor given by $b_j = 0$, and
$b_0=f$ on the slice $\{t=0\}$. Estimating the errors as in
\eqref{eq:errorest1},
we will have
\[  \Vert \beta_{g_\epsilon}(\tilde{h})|_{t=0} -
b\Vert_{C^{1,\alpha}_{g_\epsilon}} \leq R_\epsilon^n C_d( \epsilon^2
R_\epsilon^2 + R_\epsilon^{-d}) \Vert
f\Vert_{C^{1,\alpha}_{\epsilon^{-2}g_M}}. \]
Choosing $R_\epsilon = \epsilon^{-\tau}$ for some small $\tau > 0$,
this implies
\[ \begin{aligned} \Vert \beta_{g_\epsilon}(\tilde{h})|_{t=0} -
b\Vert_{C^{1,\alpha}_{g_\epsilon}} &\leq C_d( \epsilon^{2-(2+n)\tau} + \epsilon^{d\tau}) \Vert
f\Vert_{C^{1,\alpha}_{\epsilon^{-2}g_M}} \\
&\leq C_\delta \epsilon^{2-\delta} \Vert
f\Vert_{C^{1,\alpha}_{\epsilon^{-2}g_M}},
\end{aligned} \]
if $\tau = (2+n)^{-1}\delta$ and $d$ is sufficiently large. Similarly
we have
\[ \Vert L_{g_{\epsilon}', g_\epsilon} \tilde{h}\Vert_{C^{0,\alpha}_1} \leq
          C_\delta\epsilon^{2-\delta}\Vert
          f\Vert_{C^{1,\alpha}_{\epsilon^{-2}g_M}}. \]
We can now apply Proposition~\ref{prop:fixboundary} to perturb
$\tilde{h}$ to $h$ satisfying $\beta_{g_\epsilon}(h)|_{t=0} = b$,
while still satisfying the required estimate \eqref{eq:Lhest2} for
$L_{g_{\epsilon}',g_{\epsilon}}h$.
\end{proof}

\subsection{Inverting the linearized operator on the kernel of
  $I$}\label{sec:invertingLkerI}
We now move on to inverting the linearized operator, on the kernel of $I$.
Recall that given any symmetric 2-tensor $u$ on $M\times [0,\infty)$,
we defined the 1-form $I(u)$ on $M$ as in (\ref{Idef}). We then have the following.

\begin{prop}\label{prop:inverse1}
  We have a linear map
  \[ P_\epsilon : C^{0,\alpha}_1(\mathrm{ker}\,I) \to
  C^{2,\alpha}_1(S^2), \]
  satisfying the following.
  \begin{enumerate}
    \item There is a uniform bound $\Vert P_\epsilon
      u\Vert_{C^{2,\alpha}_1} \leq C \Vert u\Vert_{C^{0,\alpha}_1}$,
    \item $P_\epsilon$ is an approximate inverse to $L_{g_\epsilon}$
      in the sense that
       \[\Vert L_{g_{\epsilon}',g_{\epsilon}} P_\epsilon u -
         u\Vert_{C^{0,\alpha}_1} \leq C\epsilon^{p} \Vert
         u\Vert_{C^{0,\alpha}_1}, \]
       where we can take any $1 < p < 4/3$.
   \item $P_\epsilon(u)$ satisfies the Bianchi boundary condition,
     i.e. $\beta_{g_\epsilon}(P_\epsilon u)|_{t=0} = 0$.
 \end{enumerate}
 \end{prop}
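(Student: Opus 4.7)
The plan is to localize $u$, apply the model inverse $P_{\mathbf{H}}$ from Theorem~\ref{thm:Hinverse} on each piece, paste the pieces together using cutoffs, and then correct the Bianchi boundary values via Proposition~\ref{prop:fixboundary}. Cover $M$ by balls of radius $1$ with respect to $\epsilon^{-2} g_M$, let $\{\gamma_i\}_{i=1}^{N_\epsilon}$ be a subordinate partition of unity with $N_\epsilon = O(\epsilon^{-n})$, and write $u = \sum_i \gamma_i u$. Since $\gamma_i$ is a function on $M$ alone, $I(\gamma_i u) = \gamma_i I(u) = 0$, so Theorem~\ref{thm:Hinverse} applies to each $\gamma_i u$ after transporting to $B_1 \times [0,\infty) \subset \mathbf{H}^{n+1}_+$ via rescaled normal coordinates based at the center of ball $i$. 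This produces $h_i$ solving $L_{\mathbf{H}} h_i = \gamma_i u$ with $\beta_{g_{\mathbf{H}}}(h_i)|_{t=0} = 0$ and, crucially, the spatial decay $\Vert h_i \Vert_{C^{2,\alpha}_1(A_{R-1,R})} \leq C R^{-n-1+\delta} \Vert u\Vert_{C^{0,\alpha}_1}$ for any fixed $\delta \in (0,1)$.

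Choose a cutoff $\chi_{R_\epsilon}$ supported in $B_{R_\epsilon}$ and equal to $1$ on $B_{R_\epsilon-1}$ with $R_\epsilon = \epsilon^{-\tau}$ for $\tau$ to be optimized, and set $\tilde h = \sum_i \chi_{R_\epsilon} h_i$, transplanting each piece back to $M \times [0,\infty)$. The uniform bound $\Vert \tilde h\Vert_{C^{2,\alpha}_1} \leq C \Vert u\Vert_{C^{0,\alpha}_1}$ follows by summing the spatial decay estimate over the $O(R_\epsilon^n)$ ball centers contributing at any given point, with the integral $\int_1^{R_\epsilon} |y|^{-2+\delta}\, d|y|$ remaining bounded. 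The error $L_{g_\epsilon',g_\epsilon}\tilde h - u$ then decomposes as a commutator part $\sum_i [L_{\mathbf{H}}, \chi_{R_\epsilon}] h_i$, supported in the $R_\epsilon$-annulus, and a metric-comparison part $\sum_i (L_{g_\epsilon',g_\epsilon} - L_{\mathbf{H}})(\chi_{R_\epsilon} h_i)$. In rescaled normal coordinates on $B_{R_\epsilon}$ both $g_\epsilon - g_{\mathbf{H}}$ and $g_\epsilon' - g_\epsilon$ have coefficients of order $\epsilon^2 |y|^2$ at distance $|y|$ from the center, so using the decay of $h_i$ one finds that the commutator part is pointwise $O(R_\epsilon^{n-1} \cdot R_\epsilon^{-n-1+\delta}) = O(R_\epsilon^{-2+\delta}) = O(\epsilon^{(2-\delta)\tau})$, while the metric-comparison part is $O\bigl(\epsilon^2 \int_1^{R_\epsilon} |y|^2 \cdot |y|^{-n-1+\delta} \cdot |y|^{n-1}\, d|y|\bigr) = O(\epsilon^{2-(1+\delta)\tau})$. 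Balancing by choosing $\tau$ close to $2/3$ makes the total error $C\epsilon^p$ for $p$ arbitrarily close to $4/3$ from below.

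The same decomposition applies at $t=0$: since $\beta_{g_{\mathbf{H}}}(h_i)|_{t=0} = 0$, the residual $\eta := \beta_{g_\epsilon}(\tilde h)|_{t=0}$ consists only of commutator and metric-comparison contributions of identical structure, yielding $\Vert \eta\Vert_{C^{1,\alpha}_{\epsilon^{-2}g_M}} \leq C\epsilon^p$. Proposition~\ref{prop:fixboundary} then produces $h''$, supported in $M \times [0,1]$, with $\beta_{g_\epsilon}(h'')|_{t=0} = \eta$ and $\Vert h''\Vert_{C^{2,\alpha}_1} \leq C\epsilon^p$. Setting $P_\epsilon u := \tilde h - h''$ gives conclusion (3) by construction and (1) from the uniform bound above, while (2) follows by combining the error estimate with $\Vert L_{g_\epsilon',g_\epsilon} h''\Vert_{C^{0,\alpha}_1} \leq C\epsilon^p$.

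The principal obstacle is the balancing step in the second paragraph: using only the uniform $C^{2,\alpha}_1$ bound on $h_i$ and treating $L_{g_\epsilon',g_\epsilon} - L_{\mathbf{H}}$ as uniformly $O(\epsilon^2 R_\epsilon^2)$ on the whole ball would produce an exponent $p$ below $1$, which is too weak for the contraction argument in Section~\ref{SecNonlinear}. The key is to exploit both the $|x|^{-n-1+\delta}$ spatial decay of $h_i$ provided by Theorem~\ref{thm:Hinverse} (whose proof relies on the hypothesis $I(u)=0$) and the fact that the coefficient difference $L_{g_\epsilon',g_\epsilon} - L_{\mathbf{H}}$ grows only quadratically away from each ball center, which together reduce the effective overlap count and yield the sharper exponent $p$ close to $4/3$.
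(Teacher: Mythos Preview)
Your proposal is correct and follows essentially the same approach as the paper: the same partition of unity localization, the same application of Theorem~\ref{thm:Hinverse} to each $\gamma_i u$, the same cutoff-and-sum construction, the same two-term error decomposition (cutoff commutator supported in the outer annulus versus the $\epsilon^2|y|^2$ metric-comparison term weighted against the $|y|^{-n-1+\delta}$ decay of $h_i$), and the same boundary correction via Proposition~\ref{prop:fixboundary}. Your optimized exponent $\tau$ near $2/3$ matches the paper's explicit choice $R_\epsilon = \epsilon^{-2/3}$, giving the same $p < 4/3$.
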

\begin{proof}
  We construct an approximate inverse $\tilde{P}_\epsilon$ for
  $L_{g_{\epsilon}',g_{\epsilon}}$ in a very similar way to the proofs of
  Lemma~\ref{lem:laplaceinverse} and
  Proposition~\ref{prop:Binverse}. The difference is that the local
  result used here, Theorem~\ref{thm:Hinverse}, does not give
  rise to solutions with decay properties as strong as the local
  results used above. As a result the estimates required to obtain an
  approximate inverse are more delicate.

  As before, let us cover $M$ with unit balls $\{ B^i \}_{1 \leq i \leq N_{\epsilon}}$ with
  respect to the metric $\epsilon^{-2}g_M$, and let $\gamma_1,\ldots,
  \gamma_{N_\epsilon}$ be a partition of unity subordinate to this
  cover. We have $N_\epsilon = O(\epsilon^{-n})$, and we can assume
  that all derivatives of the $\gamma_i$ are bounded uniformly. Given
  $u\in C^{0,\alpha}_1(\mathrm{ker}\,I)$, we can express $u$ as
  \[ u = \sum_{i=1}^{N_\epsilon} \gamma_i u. \]

   Let $B^i = B^i_{\epsilon^{-2}g_M}(p_i,1)$ be one such ball of the covering, and let $\{ x^{\mu}\}$ be coordinates centered at $p_i$ that are normal
   with respect to $\epsilon^{-2}g_M$.  We may assume these coordinates are defined on all of $B^i$.  In particular, if $\{ y^{\mu} \}$ are coordinates centered at $p_i$ that are
   normal with respect to $g_M$, then we can just take $x^{\mu} = \epsilon^{-1} y^{\mu}$ to be the dilated coordinates, and it is clear that the $x$-coordinates are defined on $B^i$ once
   $\epsilon > 0$ is small enough.

   We can use the $x$-coordinates on $B^i$ to view $\gamma_i u$ as a 2-tensor on $\mathbf{H}^{n+1}_+$, supported in
  $B_1\times [0,\infty) \subset \mathbf{H}^{n+1}_+$, where $B_1$ is a (Euclidean) unit ball.  Also, $\gamma_i u$ satisfies
  $I(\gamma_i u)=0$, so we can apply Theorem \ref{thm:Hinverse}.  Letting $P_{\mathbf{H}}$ denote the inverse of the
  linearized operator $L_{g_{\mathbf{H}}}$ on the model
  space, we obtain solutions $h_i = P_{\mathbf{H}}(\gamma_i u)$ of $L_{g_{\mathbf{H}}}(h_i) =
  \gamma_i u$.  By the estimates of Theorem \ref{thm:Hinverse}, we have
  \[ \Vert h_i\Vert_{C^{2,\alpha}_1(A_{R-1,R})} \leq C \Vert
  u\Vert_{C^{0,\alpha}_1} R^{-n-1 + \delta}, \]
   where $\delta \in (0,1)$ and $R > 1$.  Let $R_\epsilon= \epsilon^{-2/3}$, and let $\chi_{R_\epsilon}$ be a cutoff function supported in the Euclidean ball
  $B_{R_\epsilon}$, equal to 1 in $B_{R_\epsilon-1}$.  Using the $x$-coordinates we can identify the balls $B_{\epsilon^{-2}g_M}(p_i,R_{\epsilon})$ with Euclidean
  balls $B(\mathbf{0},R_{\epsilon})$, and view $\chi_{R_\epsilon}h_i$ as a 2-tensor
  on $M\times [0,\infty)$.  We define
  \[ \tilde{P}_\epsilon(u) = \sum_{i=1}^{N_\epsilon} \chi_{R_\epsilon}h_i. \]

  In order to estimate the norm $\Vert \tilde{P}_\epsilon
  u\Vert_{C^{2,\alpha}_1}$, note that at each point $(p, t)\in M\times
  [0,\infty)$, there will be contributions to $\tilde{P}_\epsilon u$ from
  those $h_i$, for which the center of the corresponding ball in our
  covering of $M$ is of distance $k < R_\epsilon+1$ from $p$. There
  will be approximately $k^{n-1}$ balls whose distance from $p$ is in
  the interval $[k-1,k)$, and the corresponding functions
  $\chi_{R_\epsilon} h_i$ will contribute $k^{-n-1 + \delta}\Vert
  u\Vert_{C^{0,\alpha}_1}$ to the norm of $\tilde{P}_\epsilon u$ at $p$,
  because of the decay of $h_i$. Adding up these contributions we have
  \[ \begin{aligned}
      \Vert \tilde{P}_\epsilon u\Vert_{C^{2,\alpha}_1} &\leq C
      \sum_{k=1}^{\lceil R_\epsilon \rceil} k^{n-1} k^{-n-1 + \delta} \Vert
      u\Vert_{C^{0,\alpha}_1} \\
  &\leq C \Vert u\Vert_{C^{0,\alpha}_1},
\end{aligned} \]
since $\delta < 1$.   This gives the required bound on $\tilde{P}_\epsilon$.

  Next we need to estimate the error
  \[ \Vert L_{g_{\epsilon}',g_\epsilon} \tilde{P}_\epsilon(u) -
  u\Vert_{C^{0,\alpha}_1}. \]
  There are two sources of error: the difference between
  $L_{g_{\epsilon}', g_\epsilon}$ and $L_{g_{\mathbf{H}}}$, and the error from using
  the cutoff function $\chi_R$.

  For the latter note that
  \[ L_{g_{\epsilon}',g_\epsilon} (\chi_{R_\epsilon} h_i) = \chi_{R_\epsilon} L_{g_\epsilon}
  h_i + E = \gamma_i u_i + E, \]
  where $E$ is supported in $B_{R_\epsilon}\setminus
  B_{R_\epsilon-1}$, and it is bounded by the $C^{2,\alpha}$ norm of
  $h_i$ there. From the decay of $h_i$ we then get
  \begin{equation}\label{eq:1}
 \begin{aligned} \Vert L_{g_{\epsilon}',g_\epsilon}(\chi_{R_\epsilon} h_i) - \chi_{R_\epsilon}
  L_{g_{\epsilon}',g_\epsilon} h_i \Vert_{C^{0,\alpha}_1(A_{R_\epsilon-1, R_\epsilon})} &\leq C \Vert
  h_i\Vert_{C^{2,\alpha}_1(B_{R_\epsilon}\setminus B_{R_\epsilon-1})} \\
  &\leq C R_\epsilon^{-n-1 + \delta} \Vert u\Vert_{C^{0,\alpha}_1},
  \end{aligned}
  \end{equation}
  and the error vanishes outside the annular region $A_{R_\epsilon-1,
    R_\epsilon}$.

  Next we consider the error arising from the difference between $L_{g_\epsilon', g_{\epsilon}}$
  and $L_{g_{\mathbf{H}}}$.  To do this we first observe that on the set $C_{R} = B_{R} \times [0,\infty)$, where $1 < R < R_{\epsilon}$, we have
  \begin{equation}\label{eq:gM-gEuc}
 \Vert g_{\epsilon}' - g_{\mathbf{H}} \Vert_{C^2(C_{R})} = O(\epsilon^2 R^2).
\end{equation}
  Since $g_{\epsilon}'$ and $g_{\mathbf{H}}$ are close in $C^2$, we want to show the the corresponding linear operators are close (in a sense that will be made
  precise below).

  Recall the formula for $L_{g_{\epsilon}',g_{\epsilon}}$ in (\ref{Lform}):
  \begin{align*}
L_{g_{\epsilon}',g_{\epsilon}}(h) = -\frac{1}{2} \Delta_{g_{\epsilon}'} h + \mathfrak{D}_{g_{\epsilon}'}(h) + nh + \mathcal{R}_{g_{\epsilon}'}(h),
\end{align*}
where
\begin{align*}
\mathfrak{D}_{g_{\epsilon}'}(h) = \delta^*_{g_{\epsilon}'} \big\{  \beta_{g_{\epsilon}'}(h) -  \beta_{g_{\epsilon}}(h)  \big\},
\end{align*}
and $\mathcal{R}_{g_{\epsilon}'}$ is given by (\ref{Rterm}), with $\tilde{g} = g_{\epsilon}'$.  Also,
 \begin{align*}
L_{g_{\mathbf{H}}}(h) = -\frac{1}{2} \Delta_{g_{\mathbf{H}}} h + nh + \mathcal{R}_{g_{\mathbf{H}}}(h),
\end{align*}
where $\mathcal{R}_{g_{\mathbf{H}}}$ is now computed with respect to the curvature of $g_{\mathbf{H}}$.   Subtracting, we have
\begin{align} \label{diffs}
\big( L_{g_{\epsilon}',g_{\epsilon}} - L_{g_{\mathbf{H}}} \big)(h) = -\frac{1}{2} \big( \Delta_{g_{\epsilon}'} - \Delta_{g_{\mathbf{H}}} \big) h + \mathfrak{D}_{g_{\epsilon}'}(h)
+ \big( \mathcal{R}_{g_{\epsilon}'} - \mathcal{R}_{g_{\mathbf{H}}} \big)(h).
\end{align}
Therefore, we need to estimate each of the terms on the right-hand side.

To estimate the term with $\mathfrak{D}$, we use the fact that the Bianchi operator $\beta_g h$ with respect to a local coordinate system can be schematically written as
\begin{align*}
\beta_g h = g^{-1} * \partial h + g^{-1} * g^{-1} * \partial g * h,
\end{align*}
where $*$ denotes the operation of tensor products and contractions. It follows that given two metrics $g,g'$,
\begin{align} \label{bsc} \begin{split}
\beta_{g'} h - \beta_{g} h &= (  (g')^{-1} - g^{-1} ) * \partial h + ( (g')^{-1} - g^{-1}  ) * (g')^{-1} * \partial g' * h  \\
 & \ \ + \cdots + g^{-1} * g^{-1} * \partial ( g' - g) * h.
\end{split}
\end{align}
In the same way we can express the operator $\delta^{*}_g \omega$ as
\begin{align} \label{delsc}
\delta^{*}_g \omega = \partial \omega + g^{-1} * \partial g * \omega.
\end{align}
If we take $g = g_{\epsilon}$ and $g' = g_{\epsilon}'$, then combining (\ref{bsc}) and (\ref{delsc}) we have
\begin{align} \label{Dsc}
\mathfrak{D}_{g_{\epsilon}'}(h) =  (  (g_{\epsilon}')^{-1} - g_{\epsilon}^{-1} ) * \partial^2 h + \cdots  + g^{-1} * g^{-1} * \partial^2 ( g_{\epsilon}' - g_{\epsilon}) * h.
\end{align}
On the set $C_{R} = B_{R} \times [0,\infty)$, where $1 < R < R_{\epsilon}$, we have
  \begin{equation}\label{ggdiff}
 \Vert g_{\epsilon}' - g_{\epsilon} \Vert_{C^2(C_{R})} +  \Vert (g_{\epsilon}')^{-1} - g_{\epsilon}^{-1} \Vert_{C^2(C_{R})} = O(\epsilon^2 R^2).
\end{equation}
Consequently,
\begin{align} \label{Ddiff}
\Vert \mathfrak{D}_{g_{\epsilon}'}(h)  \Vert_{C^{0,\alpha}_1(A_{R-1, R})} \leq C  \epsilon^2 R^2 \Vert h \Vert_{C^{2,\alpha}_1(A_{R -1, R})},
\end{align}
 for $1 < R < R_{\epsilon}$.

To estimate the the remaining terms in (\ref{diffs}), we argue in a similar way.  For a metric $g$, we can schematically write the rough laplacian with respect to a local coordinate system as
 \begin{align*}
 \Delta_g h = g^{-1} * \partial^2 h + g^{-1}*\partial g * \partial h + g^{-1} * g^{-1} * \partial g * \partial g * h + g^{-1} * g^{-1} * \partial^2 g * h.
 \end{align*}
We can then estimate the difference $\big( \Delta_{g_{\epsilon}'} - \Delta_{g_{\mathbf{H}}} \big) h$ using (\ref{eq:gM-gEuc}) and (\ref{ggdiff}) to get
\begin{align*}
 \Vert (\Delta_{g_{\mathbf{H}}} - \Delta_{g_{\epsilon}'})(h) \Vert_{C^{0,\alpha}_1(A_{R-1, R})} \leq C  \epsilon^2 R^2 \Vert h \Vert_{C^{2,\alpha}_1(A_{R -1, R})}.
  \end{align*}
We can estimate the difference of the curvature terms in a similar manner, and combining all of these estimates we conclude
\begin{equation}\label{eq:LgHcompare}
 \Vert \big( L_{g_{\epsilon}',g_{\epsilon}} - L_{g_{\mathbf{H}}} \big)(h) \Vert_{C^{0,\alpha}_1(A_{R-1, R})} \leq C  \epsilon^2 R^2 \Vert h \Vert_{C^{2,\alpha}_1(A_{R -1, R})},
  \end{equation}
  for $1 < R < R_{\epsilon}$.
 Combining this with the errors introduced by the cut-off functions estimated in (\ref{eq:1}), we obtain
  \begin{equation}\label{eq:2} \begin{aligned}
        \Vert L_{g_{\epsilon}',g_\epsilon} (\chi_{R_\epsilon}h_i) -
        L_{g_{\mathbf{H}}} (\chi_{R_\epsilon}h_i)
        \Vert_{C^{0,\alpha}_1(A_{R-1,R})} &\leq C\epsilon^2 R^2 \Vert
        \chi_{R_\epsilon} h_i\Vert_{C^{2,\alpha}_1(A_{R-1,R})} \\
      &\leq C \epsilon^2 R^{-n + 1 + \delta } \Vert u\Vert_{C^{0,\alpha}_1},
  \end{aligned}
  \end{equation}
  for $1 < R < R_{\epsilon} + 1$, and the error vanishes for larger $R$.

  To estimate the difference $L_{g_{\epsilon}',g_\epsilon} P_\epsilon(u) - u$, we
  need to sum up all the contributions from \eqref{eq:1} and
  \eqref{eq:2}.
  \begin{enumerate}
\item For each $i$, the error coming from \eqref{eq:1} appears only in
  an annulus $B_{R_\epsilon} \setminus B_{R_\epsilon-1}$.
  Each point in $M$ will be covered by roughly
  $R_\epsilon^{n-1}$ such annuli, and so the total contribution of
  this type of error at each point will be bounded by
 \[CR_\epsilon^{n-1}R_\epsilon^{-n-1 + \delta } \Vert u\Vert_{C^{0,\alpha}_1} =
 CR_\epsilon^{-2 + \delta } \Vert u\Vert_{C^{0,\alpha}_1}. \]

\item For each $i$, the error coming from \eqref{eq:2} appears on an
  $R_\epsilon$-ball, but it decays as we approach the boundary of the
  $R_\epsilon$-ball. We estimate this in a similar way to the way we
  bounded $P_\epsilon u$ above. When $\epsilon$ is sufficiently small, then on an
  $R_\epsilon$ ball our cover of $(M, \epsilon^{-2} g_M)$ with
  unit balls has centers that are roughly on the grid $\mathbf{Z}^n\subset
  \mathbf{R}^n$ (in normal coordinates). We can sum up the
  contributions of these errors at the origin. If a unit ball
  has  distance in the interval $[k-1, k)$ from the origin, then
  according to \eqref{eq:2} it
  contributes an error of $C\epsilon^2  k^{-n + 1 + \delta}\Vert
  u\Vert_{C^{0,\alpha}_1}$.
  There will be roughly
  $k^{n-1}$ such balls, and $k$ can range from $1$ to $\lceil
  R_\epsilon\rceil$.
  The sum of
  errors will therefore be bounded by
  \[ C\epsilon^2  \sum_{k=1}^{\lceil R_\epsilon \rceil} k^{n-1}k^{- n + 1 + \delta }
  \Vert u\Vert_{C^{0,\alpha}_1}
  \leq C \epsilon^2 R_\epsilon^{1 + \delta} \Vert u\Vert_{C^{0,\alpha}_1}. \]
\end{enumerate}

Adding up all of these contributions we have
\[ \begin{aligned}
 \Vert L_{g_{\epsilon}',g_\epsilon}\tilde{P}_\epsilon(u) - u\Vert_{C^{0,\alpha}_1} &\leq
C(R_\epsilon^{-2 + \delta } + \epsilon^2 R_{\epsilon}^{1 + \delta} ) \Vert
u\Vert_{C^{0,\alpha}_1} \\
&\leq C\epsilon^{\frac{4}{3} - \frac{2}{3}\delta} \Vert
u\Vert_{C^{0,\alpha}_1},
\end{aligned} \]
since $R_\epsilon = \epsilon^{-2/3}$.  As $\delta \in (0,1)$, we conclude
\begin{align*}
 \Vert L_{g_{\epsilon}',g_\epsilon}\tilde{P}_\epsilon(u) - u\Vert_{C^{0,\alpha}_1} \leq C\epsilon^{p} \Vert
u\Vert_{C^{0,\alpha}_1},
\end{align*}
for any $1 < p < 4/3$.

    We still need to consider the boundary condition. Each $h_i = P(\gamma_i
    u)$ satisfies the boundary condition with respect to the
    hyperbolic metric, but we introduce an error when we multiply with
    the cutoff function $\chi_R$, and also when we use the metric
    $g_\epsilon$ instead of the hyperbolic metric. Accounting for the
    errors exactly as above, we have
   \[ \Vert \beta_{g_\epsilon}(\tilde{P}_\epsilon
   u)\Vert_{C^{1,\alpha}_1} \leq C\epsilon^{p} \Vert
   u\Vert_{C^{0,\alpha}_1}, \]
   where $1 < p < 4/3$.  We can now us Proposition~\ref{prop:fixboundary} to find a 2-tensor
   $k$ supported in $M\times [0,1]$, satisfying $\Vert
   k\Vert_{C^{2,\alpha}_1} \leq C\epsilon^{p} \Vert
   u\Vert_{C^{0,\alpha}_1}$ and   $\beta_{g_\epsilon}(k)|_{t=0} =
   \beta_{g_\epsilon}(\tilde{P}_\epsilon u)|_{t=0}$. Then we define
  \[ P_\epsilon u = \tilde{P}_\epsilon u - k, \]
   and this will satisfy all of our requirements.
\end{proof}

\subsection{The induced operator on $\Omega^1(M)$}
In the previous section we considered the equation
$L_{g_{\epsilon}',g_{\epsilon}}(h)=u$ for $u\in \mathrm{ker}\,I$. We now consider the
complementary problem of solving $I\circ L_{g_{\epsilon}',g_{\epsilon}}(h) = \omega$
for a one-form $\omega$ on $M$. Let us define the operator
\begin{align} \label{Tdef} \begin{split}
    \mathcal{T}: C^{2,\alpha}_{g_M}(\Omega^1(M)) &\to
    C^{0,\alpha}_{g_M}
    (\Omega^1(M)), \\
    \omega &\mapsto I\big(
    L_{g_{\epsilon}',g_{\epsilon}}(\epsilon^{-2}e^{-nt}\omega\odot dt)\big),
    \end{split}
\end{align}
where we emphasize that we will now measure norms using the metric
$g_M$ on $M$ instead of $\epsilon^{-2}g_M$.

The dependence of $\mathcal{T}$ on $\epsilon$ is described by the
following result.
\begin{prop}
  There is an elliptic operator
  \[ \mathcal{T}_0 : C^{2,\alpha}_{g_M}(\Omega^1(M)) \to
    C^{0,\alpha}_{g_M}(\Omega^1(M)) \]
  such that
  \begin{equation}\label{eq:TT_0}
 \Vert \mathcal{T} - \mathcal{T}_0 \Vert \leq C\epsilon^{1-\alpha},
  \end{equation}
  for a constant $C$ independent of $\epsilon$.
\end{prop}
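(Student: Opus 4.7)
The plan is to compute $L_{g_\epsilon',g_\epsilon}(\epsilon^{-2}e^{-nt}\omega\odot dt)$ explicitly in components, extract the $j0$-component (the only one that $I$ sees), and read off the $\epsilon$-independent limit operator as $\mathcal{T}_0$. The tensor $h = \epsilon^{-2}e^{-nt}\omega\odot dt$ has only nonzero components $h_{j0} = \tfrac{1}{2}\epsilon^{-2}e^{-nt}\omega_j$, with $h_{ij}=h_{00}=0$. Substituting into (\ref{eq:bhformula}), the cancellation $nh_{j0}+\partial_0 h_{j0}=0$ forces $\beta_{g_\epsilon}(h)_j\equiv 0$, while $\beta_{g_\epsilon}(h)_0 = \tfrac{1}{2} e^{-(n+2)t}\,g_M^{ik}\nabla^M_k\omega_i$. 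This cancellation is exactly what makes the output of $I$ an honest operator on $M$ alone, and already suggests the expected $\epsilon$-independent structure.

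Substituting $h$ into (\ref{eq:DRiceq1}) eliminates all the terms involving $h_{ij}$ or $h_{00}$, leaving
\[
-2\, D(\mathrm{Ric}_{g_\epsilon}+n)h_{j0}
= \tfrac{1}{2}\, e^{-(n+2)t}\bigl[\Delta_M \omega_j - g_M^{ik}\nabla^M_j\nabla^M_k\omega_i\bigr].
\]
A direct computation of $\delta^*_{g_\epsilon}\beta_{g_\epsilon}(h)$ using the Christoffel symbols of $g_\epsilon$ recorded earlier gives $\delta^*_{g_\epsilon}\beta_{g_\epsilon}(h)_{j0} = \tfrac{1}{4} e^{-(n+2)t}\nabla^M_j(g_M^{ik}\nabla^M_k\omega_i)$. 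Assembling the two contributions, $L_{g_\epsilon,g_\epsilon}(h)_{j0} = e^{-(n+2)t}P(\omega)_j$ for an $\epsilon$-independent second-order differential operator $P$ on $\Omega^1(M)$ whose coefficients depend only on $g_M$, and applying $I$ yields
\[
\mathcal{T}_0(\omega) = \int_0^\infty e^{-(n+4)t}\,P(\omega)\,dt = \frac{1}{n+4}\, P(\omega).
\]

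To verify ellipticity of $\mathcal{T}_0$, I would use the Ricci identity to rewrite $g_M^{ik}\nabla^M_j\nabla^M_k\omega_i = \nabla^M_j(\mathrm{div}_{g_M}\omega)$ modulo zeroth-order curvature terms. The principal symbol of $P$ at $\xi \in T^*_pM$, acting on $\omega\in T^*_pM$, then becomes (up to a nonzero constant) $\omega \mapsto -|\xi|^2\omega + 2(\xi\cdot\omega)\xi$, which has eigenvalue $|\xi|^2$ on $\mathbf{R}\xi$ and $-|\xi|^2$ on $\xi^\perp$, so it is invertible whenever $\xi\ne 0$. Thus $\mathcal{T}_0$ is elliptic, though not of definite sign, which is consistent with the remark made later that $\mathcal{T}_0$ need not be surjective.

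For the error bound $\Vert\mathcal{T}-\mathcal{T}_0\Vert\le C\epsilon^{1-\alpha}$ I would compare $L_{g_\epsilon',g_\epsilon}$ with $L_{g_\epsilon,g_\epsilon}$ term by term, using the fact that $g_\epsilon'-g_\epsilon = k^{(2)} + \epsilon^2 e^{-2t}k^{(4)}$ is supported on the $M$-factor and is of relative size $\epsilon^2 e^{-2t}$ compared with the $M$-part of $g_\epsilon$. Expanding the metric inverse, Christoffel symbols, and curvature of $g_\epsilon'$ around those of $g_\epsilon$ produces corrections carrying explicit positive powers of $\epsilon^2 e^{-2t}$; applied to $h$ and then followed by $I$ (an extra $e^{-2t}$ and an integral in $t$), each correction gives an operator on $\omega$ bounded in sup norm by $C\epsilon^2 \Vert\omega\Vert_{C^{2}}$. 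The extra factor $\epsilon^{-\alpha}$ reflects the rescaling between the H\"older seminorms $[\,\cdot\,]_{C^{0,\alpha}_{g_M}}$ and $[\,\cdot\,]_{C^{0,\alpha}_{\epsilon^{-2}g_M}}$, since $d_{\epsilon^{-2}g_M}=\epsilon^{-1}d_{g_M}$ implies that $\alpha$-seminorms gain an $\epsilon^{-\alpha}$ factor under the change of base metric on $M$. The resulting estimate is well within $C\epsilon^{1-\alpha}$; the one step that genuinely needs care is the symbol calculation for $P$, to confirm that the divergence contributions from $\delta^*_{g_\epsilon}\beta_{g_\epsilon}(h)$ do not cancel the Laplacian terms in a way that would destroy invertibility of $\mathcal{T}_0$.
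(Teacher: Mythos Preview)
Your direct computation of $I\circ L_{g_\epsilon,g_\epsilon}(\epsilon^{-2}e^{-nt}\omega\odot dt)$ on $M$ is a legitimate alternative to the paper's route, which instead works in normal coordinates for $\epsilon^{-2}g_M$ and compares $L_{g_\epsilon',g_\epsilon}$ to the hyperbolic model $L_{g_{\mathbf{H}}}$. Your intrinsic calculation is cleaner, and it recovers (up to signs and zeroth-order terms) exactly the operator the paper identifies; indeed the paper remarks just after the proof that with $g_\epsilon$ in place of $g_\epsilon'$ one gets the Hodge Laplacian. One caution: the Bianchi operator $\beta$ in the definition of $\mathcal{N}$ (equation~\eqref{beta}) carries the opposite sign from the $\beta$ appearing in the formula \eqref{DRicci} for $D\mathrm{Ric}$, so your $\delta^*_{g_\epsilon}\beta_{g_\epsilon}(h)_{j0}$ term should be subtracted, not added. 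With the correct sign the $\nabla\mathrm{div}$ pieces cancel and the principal part is simply a multiple of the Laplacian, matching the paper.

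There is, however, a genuine gap in your error estimate. You claim that passing from $L_{g_\epsilon,g_\epsilon}$ to $L_{g_\epsilon',g_\epsilon}$ introduces only terms of order $\epsilon^2$, on the grounds that $g_\epsilon'-g_\epsilon$ has relative size $\epsilon^2 e^{-2t}$. But your test tensor $h=\epsilon^{-2}e^{-nt}\omega\odot dt$ carries an explicit factor of $\epsilon^{-2}$, and this cancels the $\epsilon^2$ gain. Concretely, the Christoffel perturbation $\delta\Gamma^i_{0j}=-\epsilon^2 e^{-2t}(k^{(2)})^i{}_j$ combines with the large background symbol $\Gamma^0_{jk}=-\epsilon^{-2}e^{2t}g_{M,jk}$ to produce curvature corrections whose $(R'-R)^{0\ i}_{\ j\ 0}$ component is $O(\epsilon^2 e^{-2t})$; contracted with $h_{0i}=O(\epsilon^{-2}e^{-nt})$ this yields an $\epsilon$-independent contribution of size $e^{-(n+2)t}$, and after applying $I$ one gets an $O(1)$ zeroth-order operator on $\omega$ built from $k^{(2)}$. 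The $\mathfrak{D}_{g_\epsilon'}$ term produces a similar contribution. Thus $\Vert\mathcal{T}-\mathcal{T}_0^{\text{yours}}\Vert$ does not tend to zero with $\epsilon$. The paper handles this by keeping the analogous zeroth-order piece (called $A_0$ there) inside $\mathcal{T}_0$; you can repair your argument the same way, since absorbing an $\epsilon$-independent zeroth-order operator into $\mathcal{T}_0$ does not affect the principal symbol or ellipticity.
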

\begin{proof}
To begin, we want to view $L_{g_\epsilon', g_\epsilon}$ as a perturbation of the hyperbolic model operator $L_{g_{\mathbf{H}}}$.
This will require us to use normal coordinates to identify the one-form $\omega$ on $M$ with a one-form
  in $\mathbf{H}$.   To this end, as in the proof of Proposition \ref{prop:inverse1} we let $\{ x^{\mu}\}$ denote normal coordinates with respect to $\epsilon^{-2}g_M$
  defined on a ball in $M$.  With respect to these coordinates, on the region $M\times [0,1]$ we have
  \[ \epsilon^{-2}g_M = \delta_{ij} + O(\epsilon^2), \]
  (see (\ref{eq:gM-gEuc})).  In addition, if we use these coordinates to define the hyperbolic metric $g_{\mathbf{H}} = dt^2 + e^{2t} dx^2$, then $k^{(2)}, k^{(4)} = O(\epsilon^2)$, so
  \begin{equation}\label{eq:gexpand}
    \begin{aligned}  g_\epsilon &= g_{\mathbf{H}} + O(\epsilon^2), \\
      g_\epsilon' &= g_{\mathbf{H}} + O(\epsilon^2).
      \end{aligned}
  \end{equation}

  Using the estimates in the proof of Proposition \ref{prop:inverse1}), we can write
  \[ L_{g_\epsilon', g_\epsilon} = L_{g_{\mathbf{H}}} + \epsilon^2
    \mathcal{P} + O(\epsilon^3), \]
  where $\mathcal{P}$ is a linear operator independent of $\epsilon$,
  determined by the terms of order $\epsilon^2$ in the difference (\ref{eq:LgHcompare}).   Note also that if $\Vert \omega\Vert_{C^{2,\alpha}_{g_M}} \leq 1$,
  then in the $x$-coordinates we have
  \begin{equation} \label{eq:omegaests}
   |\omega_j| \leq C\epsilon, \quad |\partial \omega_j| \leq
    C\epsilon^2, \quad |\partial^2\omega_j|_{C^\alpha} \leq
    C\epsilon^3.
  \end{equation}
  It follows that
  \[ L_{g_\epsilon', g_\epsilon} (\epsilon^{-2}e^{-nt}\odot dt) =
    L_{g_\mathbf{H}}(\epsilon^{-2}e^{-nt}\omega\odot dt) +
    \mathcal{P}(e^{-nt} \omega\odot dt) + O(\epsilon^2). \]
 For tensors of the form $h =  \epsilon^{-2}e^{-nt}\omega\odot dt$, it follows from (\ref{Lhomog}) that
  \[ L_{g_\mathbf{H}}(\epsilon^{-2}e^{-nt}\omega\odot dt)_{j0} =
    e^{-2t} \epsilon^{-2} \partial_i\partial_i \omega_j \]
Let us write
  \[ \mathcal{P}(e^{-nt}\omega\odot dt)_{j0} = A_2(\omega)_{j0} +
    A_1(\omega)_{j0} + A_0(\omega)_{j0}, \]
  where $A_m$ denotes the degree $m$ part of the operator. The
  estimates \eqref{eq:omegaests} imply that
  \[ L_{g_\epsilon', g_\epsilon} (\epsilon^{-2}e^{-nt}\omega\odot dt)_{j0} =
    e^{-2t}\epsilon^{-2} \partial_i\partial_i \omega_j +
    A_0(\omega)_{j0} + O(\epsilon^2), \]
    where again we emphasize the components are with respect to the $x$-coordinates.

  The same calculation also applies on the regions $M\times [T, T+1]$
  for all $T$, using normal coordinates for
  $e^{2T}\epsilon^{-2}g_M$. Applying the operator $I$
  (i.e. integrating out the $t$ variable),  we find that at least at
  the center of our coordinate system we have
  \begin{equation}\label{eq:ILerror}
    I(L_{g_\epsilon', g_\epsilon}(\epsilon^{-2}e^{-nt}\omega \odot
  dt))_j = c \epsilon^{-2}\partial_i\partial_i \omega_j +
  A(\omega)_j + O(\epsilon^2),
  \end{equation}
  where $c$ is a fixed constant arising from integrating the
  exponential term in $t$, and $A$ is a zeroth order operator on
  one-forms. Note that up to zeroth order terms, at the origin of our
  normal coordinate system
  $\partial_i\partial_i \omega_j$ is simply the rough
  Laplacian of $\omega$ with respect to the metric $g_M$. When we
  measure the $O(\epsilon^2)$ error term in \eqref{eq:ILerror} with
  respect to $g_M$ instead of $\epsilon^{-2}g_M$, then in the
  $C^{0,\alpha}$-norm we lose a factor of $\epsilon^{1+\alpha}$. In
  sum we have
  \[ \Vert \mathcal{T}\omega - c \Delta_{g_M} \omega - \tilde{A}
  \omega\Vert_{C^{0,\alpha}} \leq C\epsilon^{1-\alpha}, \]
  where $\tilde{A}$ is a zeroth order operator and $\Delta_{g_M}$ is
  the rough Laplacian on one-forms.
\end{proof}

The specific form of $\mathcal{T}_0$ is not important, but note that
for instance if instead of $g_\epsilon'$ we use the metric
$g_\epsilon$, then $\mathcal{T}_0$ is the Hodge Laplacian on one
forms. In particular $\mathcal{T}_0$ is not necessarily surjective,
already in this simple case. It is for this reason that we introduce a
further finite dimensional space $E\subset C^{2,\alpha}_1(S^2)$, and
consider the linear operator
\[ \begin{aligned}
 \overline{\mathcal{T}} : E \times
    C^{2,\alpha}_{g_M}(\Omega^1(M)) &\to C^{0,\alpha}_{g_M}(\Omega^1(M)) \\
    (r, \omega) &\mapsto I\Big[ (D\mathrm{Ric}_{g_\epsilon'} + n)
    r + L_{g_{\epsilon}',g_{\epsilon}}(\epsilon^{-2}e^{-nt}
    \omega\odot dt)\Big].
\end{aligned} \]
We then have the following.
\begin{prop}\label{prop:Tbarinverse}
For a suitable finite dimensional subspace $E\subset C^{2,\alpha}_1$,
the operator $\overline{\mathcal{T}}$ has a
right inverse with bound independent of $\epsilon$, as long as
$\epsilon$ is sufficiently small. Here the norm on
$E$ is the $C^{2,\alpha}_1$ norm.
\end{prop}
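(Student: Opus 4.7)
The plan is to perturb the $\epsilon=0$ model and apply Fredholm theory. By \eqref{eq:TT_0}, the second slot of $\overline{\mathcal{T}}$ converges in operator norm to the elliptic operator $\mathcal{T}_0$ at rate $\epsilon^{1-\alpha}$. Analogously, for any fixed finite dimensional $E\subset C^{2,\alpha}_1(S^2)$, the first slot $r\mapsto I[(D\mathrm{Ric}_{g_\epsilon'}+n)r]$ converges, as $\epsilon\to 0$, to a fixed bounded operator $\mathcal{I}_0:E\to C^{0,\alpha}(\Omega^1(M))$ obtained by replacing $g_\epsilon'$ by the hyperbolic model in \eqref{eq:DRiceq1} (using the expansion \eqref{eq:gexpand}, just as in the proof of \eqref{eq:TT_0}). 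Thus one obtains a limit
\[ \overline{\mathcal{T}}_0:E\times C^{2,\alpha}(\Omega^1(M))\to C^{0,\alpha}(\Omega^1(M)),\qquad (r,\omega)\mapsto\mathcal{I}_0(r)+\mathcal{T}_0(\omega), \]
satisfying $\Vert\overline{\mathcal{T}}-\overline{\mathcal{T}}_0\Vert=O(\epsilon^{1-\alpha})$.

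Since $\mathcal{T}_0$ is an elliptic operator on one-forms over the compact manifold $M$, it is Fredholm, and its cokernel $K^*\subset C^{0,\alpha}(\Omega^1(M))$, identified with $\ker\mathcal{T}_0^*$ via the $L^2(g_M)$ pairing, is finite dimensional. The crux of the argument is to choose $E$ so that $\mathcal{I}_0(E)$ surjects onto $K^*$, i.e.\ so that for each nonzero $\omega^*\in K^*$ there exists $r\in C^{2,\alpha}_1(S^2)$ with
\[ \int_M\langle\omega^*,\mathcal{I}_0(r)\rangle_{g_M}\,dV_{g_M}\ne 0. \]
Suppose this failed for some $\omega^*\ne 0$. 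Using the explicit formula \eqref{eq:DRiceq1} for $(D\mathrm{Ric}_{g_\epsilon'}+n)r$, together with the definition of $I$ as a $t$-integral weighted by $e^{-2t}$, I would integrate by parts and translate the vanishing of this pairing for all $r$ into a PDE on $\omega^*$. The bulk part $\mathcal{T}_0^*\omega^*=0$ is automatic; the extra freedom to vary each component of $r$ independently over $M\times[0,\infty)$ should yield additional identities which, reorganized, force the dual vector field $(\omega^*)^\sharp$ on $M$ to be a Killing field. Under the hypothesis that $g_M$ is chosen generically in its conformal class, no such Killing field exists, so $\omega^*=0$. Choosing $r_1,\ldots,r_k$ whose images $\mathcal{I}_0(r_i)$ project to a basis of $K^*$ modulo $\mathrm{Im}\,\mathcal{T}_0$ and setting $E=\mathrm{span}\{r_i\}$ then makes $\overline{\mathcal{T}}_0$ surjective, and by Fredholm theory it admits a bounded right inverse $\mathcal{S}_0$.

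To conclude, the estimate $\Vert\overline{\mathcal{T}}-\overline{\mathcal{T}}_0\Vert\leq C\epsilon^{1-\alpha}$ gives $\overline{\mathcal{T}}\circ\mathcal{S}_0=\mathrm{Id}+O(\epsilon^{1-\alpha})$ on $C^{0,\alpha}(\Omega^1(M))$, which is invertible by a Neumann series for $\epsilon$ small; then $\mathcal{S}_0\circ(\overline{\mathcal{T}}\circ\mathcal{S}_0)^{-1}$ is a right inverse for $\overline{\mathcal{T}}$ with norm bounded by $2\Vert\mathcal{S}_0\Vert$, independently of $\epsilon$. The main obstacle is the middle step: correctly identifying the obstruction to surjectivity of $\mathcal{I}_0$ onto $K^*$ with the existence of a Killing field on $(M,g_M)$. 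This requires careful tracking of the boundary contributions at $t=0$ (where $r$ is unconstrained), the decay at $t=\infty$ that justifies the integration by parts, and repackaging the resulting identities in a way which exhibits the Killing equation on $M$.
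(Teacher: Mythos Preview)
Your overall architecture is exactly the paper's: reduce to the $\epsilon$-independent model, use Fredholm theory on $\mathcal{T}_0$, and then supply enough extra directions through $E$ to kill the finite-dimensional cokernel, with the obstruction identified as Killing fields on $(M,g_M)$. The perturbation step via a Neumann series is also the same.

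Where you diverge is in the handling of the ``main obstacle'', and here the paper is both more concrete and more elementary than what you sketch. You propose to argue by duality: pair a putative $\omega^*\in K^*$ against $\mathcal{I}_0(r)$ for arbitrary $r$, integrate by parts in $t$, and extract the Killing equation. The paper instead makes a specific ansatz
\[
  r=\epsilon^{-2}e^{-nt}h+f\,e^{-nt}\,dt\odot dt,\qquad \mathrm{tr}_{g_M}h=0,
\]
and computes directly from \eqref{eq:DRiceq1} that
\[
  I\big[(D\mathrm{Ric}_{g_\epsilon}+n)r\big]=c_1\,\delta_{g_M}h+c_2\,df
\]
for nonzero constants $c_1,c_2$, with replacement of $g_\epsilon$ by $g_\epsilon'$ costing only $O(\epsilon^2)$. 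Since $\delta_{g_M}$ on symmetric $2$-tensors is underdetermined elliptic with cokernel $\ker\delta_{g_M}^*=\{\text{Killing fields}\}$, the image $\{c_1\delta h+c_2 df\}$ is all of $\Omega^1(M)$ precisely when $g_M$ has no Killing fields. This bypasses the integration-by-parts bookkeeping you flag (boundary terms at $t=0$, decay at $t=\infty$) entirely: no duality is needed, one simply exhibits enough $r$'s to hit everything.

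Two remarks on your setup. First, the paper's $E$ is not literally $\epsilon$-independent as a subset of tensors (the $\epsilon^{-2}$ scaling is essential to make the $\delta_{g_M}h$ term survive in \eqref{eq:DRiceq1}); what is $\epsilon$-independent is the parametrization by $(h,f)$ and the resulting $C^{2,\alpha}_1$-norm bound. Your phrasing ``fixed finite dimensional $E$'' with a limit operator $\mathcal{I}_0$ obscures this. Second, if you did fix $r$ in coordinates independently of $\epsilon$, the limit of $I[(D\mathrm{Ric}_{g_\epsilon'}+n)r]$ would, from \eqref{eq:DRiceq1}, retain only the $\partial_j r_{00}$ term and hence produce only exact one-forms, which is not enough to cover $K^*$ (e.g.\ harmonic forms when $\mathcal{T}_0$ is the Hodge Laplacian). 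So the explicit $\epsilon^{-2}$ rescaling in the ansatz is not cosmetic; it is what brings the $\delta_{g_M}h$ term into play.
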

\begin{proof}
Suppose that $r$ is a tensor of the form
\[ r = \epsilon^{-2}e^{-nt}h + f e^{-nt} dt\odot dt, \]
where $h$ is a symmetric two-tensor on $M$, and $f$ is a function on
$M$. Suppose in addition that $\mathrm{tr}_{g_M}h = 0$. We then have
the following formula (see \eqref{eq:DRiceq1}):
\[ -2D(\mathrm{Ric}_{g_\epsilon} + n)r_{j0} = (n+2)e^{-(n+2)t}
  g_M^{ik}\nabla^M_k h_{ij} - (n-1) e^{-nt} \partial_j f. \]
It follows that for some nonzero dimensional constants $c_1, c_2$ we
have
\[ I\circ D(\mathrm{Ric}_{g_\epsilon}+n)r = c_1 \delta_{g_M} h + c_2
  df. \]
A calculation shows that replacing $g_\epsilon$ by $g_\epsilon'$ only
introduces lower order terms. More precisely
\begin{equation}
\label{eq:IDerror} \Vert I\circ D(\mathrm{Ric}_{g_\epsilon'} + n)r - c_1\delta_{g_M} h
  - c_2 df\Vert_{C^{0,\alpha}_{g_M}} \leq C\epsilon^2 (\Vert
  h\Vert_{C^{2,\alpha}_{g_M}} + \Vert f\Vert_{C^{2,\alpha}_{g_M}}).
\end{equation}
We now observe that on the space of symmetric 2-tensors on $M$, the
operator $\delta_{g_M}$ is underdetermined elliptic, and so its image
is the orthogonal complement of $\mathrm{ker}\, \delta_{g_M}^*$, which
can be identified with the space of Killing vector fields. A generic
metric in the conformal class of $g_M$ has no Killing fields, and so
$\delta_{g_M}$ is surjective. We assume from now that this is the
case. Given any one-form $\eta$ on $M$, we can
then find $h\in S^2(M)$ such that $\delta h = \eta$, and so
\[ \eta = \delta\left\{ h - \frac{1}{n} (\mathrm{tr}_{g_M}h)g_M\right\}
  + \frac{1}{n} d\mathrm{tr}_{g_M}h. \]
It follows that for any $\eta \in \mathrm{coker}\, \mathcal{T}_0$ we
can find $r$ as above, such that
\[ I\circ D(\mathrm{Ric}_{g_\epsilon} + n)r = \eta. \]
Moreover since $\mathrm{coker}\,\mathcal{T}_0$ is finite dimensional, we have
\[ \Vert h\Vert_{C^{2,\alpha}_{g_M}} + \Vert
  f\Vert_{C^{2,\alpha}_{g_M}} \leq C\Vert
  \eta\Vert_{C^{0,\alpha}_{g_M}}. \]
It follows that
\[ \Vert r\Vert_{C^{2,\alpha}_1} \leq C\Vert
  \eta\Vert_{C^{0,\alpha}_{g_M}}. \]
We can then use this, together with \eqref{eq:TT_0} and
\eqref{eq:IDerror} to show the invertibility of
$\overline{\mathcal{T}}$ for sufficiently small $\epsilon$.
\end{proof}

\subsection{Inverting the full linearized operator}\label{sec:invertingLbar}
We now combine the pieces developed in the previous sections. We
consider the linearized operator
\[ \begin{aligned}
    \overline{L}_{g_\epsilon'} : E\times (C^{2,\alpha}_1)_\beta &\to
    C^{0,\alpha}_1 \\
    (r,h) &\mapsto (D\mathrm{Ric}_{g_\epsilon'} + n)r +
    L_{g_{\epsilon}',g_{\epsilon}}(h),
\end{aligned} \]
where $E$ is a finite dimensional subspace of $C^{2,\alpha}_1$ as above.
We can now prove Theorem~\ref{thm:Lbarinverse} on finding a right
inverse for $\overline{L}_{g_\epsilon'}$. We state the result here
again.
\begin{thm}
  Suppose that $(M,g_M)$ admits no Killing vector fields. Then for
  sufficiently small $\epsilon$ and $\alpha$ the operator
  $\overline{L}_{g_\epsilon'}$ has a right inverse $\mathcal{R}$,
  satisfying $\Vert \mathcal{R}\Vert \leq C\epsilon^{-2-\alpha}$.
\end{thm}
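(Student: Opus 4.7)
The plan is to assemble the results of the previous subsections into a bounded approximate right inverse $\mathcal{R}_0 : C^{0,\alpha}_1 \to E \times (C^{2,\alpha}_1)_\beta$ for $\overline{L}_{g_\epsilon'}$, of operator norm $\lesssim \epsilon^{-2-\alpha}$, whose error $\overline{L}_{g_\epsilon'}\mathcal{R}_0 - \mathrm{id}$ is a strict contraction on $C^{0,\alpha}_1$; a Neumann series then yields the genuine right inverse $\mathcal{R} := \mathcal{R}_0 (\overline{L}_{g_\epsilon'}\mathcal{R}_0)^{-1}$ with the same bound.

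Given $u \in C^{0,\alpha}_1$, I would construct $\mathcal{R}_0 u$ in three steps. First, to handle the $I$-component, apply the right inverse of $\overline{\mathcal{T}}$ from Proposition~\ref{prop:Tbarinverse} to the one-form $I(u) \in C^{0,\alpha}_{g_M}(\Omega^1(M))$, obtaining $(r,\omega) \in E \times C^{2,\alpha}_{g_M}(\Omega^1(M))$ with $\overline{\mathcal{T}}(r,\omega) = I(u)$, and set $h_1 := \epsilon^{-2} e^{-nt}\omega \odot dt$. By the definition of $\overline{\mathcal{T}}$, this gives $I(\overline{L}_{g_\epsilon'}(r,h_1)) = I(u)$. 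Second, a direct computation from \eqref{eq:bhformula} shows $\beta_{g_\epsilon}(h_1)_j|_{t=0} = 0$ while $\beta_{g_\epsilon}(h_1)_0|_{t=0}$ is a nonzero constant multiple of $\delta_{g_M}\omega$, so I invoke Proposition~\ref{prop:Binverse} with the corresponding $f$ to produce a correction $k_1$ with $h_1 + k_1 \in (C^{2,\alpha}_1)_\beta$ and $\|L_{g_{\epsilon}',g_{\epsilon}} k_1\|_{C^{0,\alpha}_1} \leq C \epsilon^{2-\delta} \|\omega\|_{C^{2,\alpha}_{g_M}}$ for any small $\delta > 0$. Third, let $v := u - \overline{L}_{g_\epsilon'}(r, h_1 + k_1)$; then $I(v) = -I(L_{g_{\epsilon}',g_{\epsilon}} k_1)$ is already small in $C^{0,\alpha}_{g_M}$. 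Subtract from $v$ a small explicit lift of $I(v)$ back into $C^{0,\alpha}_1$ (using the same formula $\eta \mapsto c\epsilon^{2}e^{-nt}\eta\odot dt$) to obtain $v_\perp \in \ker I$; apply Proposition~\ref{prop:inverse1} to $v_\perp$ to produce $h_2 \in (C^{2,\alpha}_1)_\beta$ with $\|h_2\|_{C^{2,\alpha}_1} \leq C \|v_\perp\|_{C^{0,\alpha}_1}$ and $L_{g_{\epsilon}',g_{\epsilon}} h_2 = v_\perp + O(\epsilon^p)\|v_\perp\|$ for any $1 < p < 4/3$, and set $\mathcal{R}_0 u := (r,\, h_1 + k_1 + h_2)$.

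To verify the norm bound I would track $\epsilon$-powers through each map. The operator $I : C^{0,\alpha}_1 \to C^{0,\alpha}_{g_M}(\Omega^1(M))$ has norm $O(\epsilon^{-1-\alpha})$: the factor $\epsilon^{-1}$ comes from $|V|_{g_\epsilon}|_{t=0} = \epsilon^{-1}|V|_{g_M}$, while the $\epsilon^{-\alpha}$ records the cost of converting H\"older seminorms between $g_M$ and $\epsilon^{-2}g_M$ on $M$. The lift $\omega \mapsto \epsilon^{-2}e^{-nt}\omega \odot dt$ has norm $O(\epsilon^{-1})$ from $C^{2,\alpha}_{g_M}$ to $C^{2,\alpha}_1$. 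Since both $\overline{\mathcal{T}}^{-1}$ and $P_\epsilon$ are uniformly bounded, these compose to give $\|\mathcal{R}_0\| \leq C\epsilon^{-2-\alpha}$. The error $\overline{L}_{g_\epsilon'}\mathcal{R}_0 - \mathrm{id}$ decomposes into an $O(\epsilon^p)$ contribution from Proposition~\ref{prop:inverse1} applied to $v_\perp$, together with a contribution of size $O(\epsilon^{2-\delta}) \cdot O(\epsilon^{-1-\alpha})$ coming from lifting $I(v)$ through the full chain; both are strictly less than $\tfrac{1}{2}$ once $\alpha$ and $\delta$ are small enough, so the Neumann series for $(\overline{L}_{g_\epsilon'}\mathcal{R}_0)^{-1}$ converges.

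The main obstacle is precisely this norm bookkeeping: although $\|\mathcal{R}_0\|$ blows up like $\epsilon^{-2-\alpha}$, one must show that the small $\epsilon^p$ and $\epsilon^{2-\delta}$ gains from the approximate-inverse propositions are still large enough — after being multiplied by the inverse scalings coming from $I$ and the lift — to produce a strict contraction. This constrains $\alpha$ and $\delta$, and is what forces the simultaneous smallness hypothesis on $\epsilon$ and $\alpha$ in the statement. Finally, the assumption that $(M,g_M)$ admits no Killing vector fields enters only through Proposition~\ref{prop:Tbarinverse}, where it ensures surjectivity of the divergence operator on symmetric 2-tensors and hence the $\epsilon$-independent bound on $\overline{\mathcal{T}}^{-1}$ that is critical for the above accounting.
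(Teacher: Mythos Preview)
Your approach is essentially the paper's, with a minor reordering: the paper first sets $u_1 = u - (D\mathrm{Ric}_{g_\epsilon'}+n)r - L_{g_\epsilon',g_\epsilon}(\epsilon^{-2}e^{-nt}\omega\odot dt)$, which lies in $\ker I$ by construction, applies $P_\epsilon$ to $u_1$, and only \emph{then} subtracts the boundary correction $k$; since $k$ is never fed back through $P_\epsilon$, no projection step is needed. Your reordering (correct the boundary first, then project onto $\ker I$) forces the extra lift-and-subtract step, which is unnecessary but harmless. Note however that your lift $\eta \mapsto c\epsilon^{2}e^{-nt}\eta\odot dt$ cannot be a section of $I$: one checks directly from \eqref{Idef} that $I(e^{-nt}\eta\odot dt)$ is a fixed dimensional multiple of $\eta$, so the correct normalization carries no power of $\epsilon$.

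There is one genuine bookkeeping gap. You record the $P_\epsilon$ contribution to the error as $O(\epsilon^{p})$, but Proposition~\ref{prop:inverse1} gives $\Vert L_{g_\epsilon',g_\epsilon}P_\epsilon v_\perp - v_\perp\Vert \leq C\epsilon^{p}\Vert v_\perp\Vert$, and $\Vert v_\perp\Vert_{C^{0,\alpha}_1}$ is of order $\epsilon^{-1-\alpha}\Vert u\Vert$, not $\Vert u\Vert$. Indeed both $\Vert (D\mathrm{Ric}_{g_\epsilon'}+n)r\Vert_{C^{0,\alpha}_1}$ and $\Vert L_{g_\epsilon',g_\epsilon}(\epsilon^{-2}e^{-nt}\omega\odot dt)\Vert_{C^{0,\alpha}_1}$ are $O(\epsilon^{-1-\alpha})$; the latter bound is nontrivial and requires observing (via \eqref{Lhomog}) that on tensors of the form $e^{-nt}\omega\odot dt$ the purely $t$-derivative terms in $L_{g_{\mathbf{H}}}$ cancel, so only spatial derivatives of $\omega$ survive. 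The paper checks this explicitly. Hence the true $P_\epsilon$ error is $O(\epsilon^{p-1-\alpha})$, which together with the $O(\epsilon^{1-\alpha-\delta})$ term is still a contraction once $\alpha < p-1$ and $\delta$ is small --- so your conclusion stands, but the accounting as written does not justify it.
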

\begin{proof}
  We construct an approximate inverse. Let $u\in C^{0,\alpha}_1$, with
  $\Vert u\Vert_{C^{0,\alpha}_1}\leq 1$. Then
  $I(u)$ is a one-form on $M$ satisfying the estimate
  \[ \Vert I(u)\Vert_{C^{0,\alpha}_{g_M}} \leq C \epsilon^{-1-\alpha}. \]
  From Proposition~\ref{prop:Tbarinverse} we have $r\in
  E$ and $\omega\in C^{2,\alpha}_{g_M}(\Omega^1M)$
  such that
  \[  \overline{\mathcal{T}}_{g_\epsilon'}(r,\omega) = I(u), \]
  and
  \[ \Vert r\Vert_{C^{2,\alpha}_1} \leq C\epsilon^{-1-\alpha}, \quad \Vert
    \omega\Vert_{C^{2,\alpha}_{g_M}} \leq C\epsilon^{-1-\alpha}. \]
  We let
  \[ u_1 = u - (D\mathrm{Ric}_{g_\epsilon'}+n)r -
    L_{g_{\epsilon}',g_{\epsilon}}(\epsilon^{-2}e^{-nt}\omega\odot dt), \]
  so that by construction, $u_1 \in \mathrm{ker}\,I$. At the same time
  \[ \Vert D(\mathrm{Ric}_{g_\epsilon'} +
    n)r\Vert_{C^{0,\alpha}_1}\leq C\Vert r\Vert_{C^{2,\alpha}_1} \leq
    C\epsilon^{-1-\alpha}, \]
  and we also have
  \begin{equation}\label{eq:Lomegaest1}
 \Vert L_{g_{\epsilon}',g_{\epsilon}}(\epsilon^{-2}e^{-nt}\omega\odot dt)
    \Vert_{C^{0,\alpha}_1} \leq C\epsilon^{-1-\alpha}.
  \end{equation}
  For the latter estimate note that by the formulas \eqref{Lhomog}, in
  the model hyperbolic space, for a tensor of the form $v =
  e^{-nt}\omega\odot dt$ with an $n$-form $\omega$ on $\mathbf{R}^n$,
  we have
  \[ \Vert L_{g_{\mathbf{H}}} v\Vert_{C^{0,\alpha}_1} \leq C\Vert
    \nabla\omega\Vert_{C^{1,\alpha}_{\mathbf{R}^n}}, \]
  since the terms that involve only $t$-derivatives of $h$
  cancel. Arguing similarly to \eqref{eq:LgHcompare} we then find that
  \[ \begin{aligned} \Vert L_{g_{\epsilon}',g_{\epsilon}} v\Vert_{C^{0,\alpha}_1} &\leq C\Big[\Vert
    \nabla\omega\Vert_{C^{1,\alpha}_{\epsilon^{-2}g_M}} +
    \epsilon^2\Vert
    \omega\Vert_{C^{2,\alpha}_{\epsilon^{-2}g_M}}\Big] \\
    &\leq C\epsilon^2\Vert \omega\Vert_{C^{2,\alpha}_{g_M}}.
\end{aligned}
\]
  The estimate \eqref{eq:Lomegaest1} then follows, and as a
  consequence we have
  \[ \Vert u_1\Vert_{C^{0,\alpha}_1} \leq C\epsilon^{-1-\alpha}. \]

  We now invoke Proposition~\ref{prop:inverse1}, to find $h_1 =
  P_\epsilon u_1$, satisfying
  \begin{equation}\label{eq:h1props} \begin{aligned}
          \Vert h_1\Vert_{C^{2,\alpha}_1} &\leq C\epsilon^{-1-\alpha},
          \\
          \Vert L_{g_{\epsilon}',g_{\epsilon}}h_1 - u_1\Vert_{C^{0,\alpha}_1} &\leq
          C\epsilon^{p-1-\alpha}, \\
          \beta_{g_\epsilon}(h_1)|_{t=0} &=0,
\end{aligned} \end{equation}
where note that we can choose any $p \in (1,4/3)$.

  To construct our approximate solution what remains is to take care
  of the Bianchi boundary condition for the term
  $\epsilon^{-2}e^{-nt}\omega\odot dt$. Note that by
  \eqref{eq:bhformula} we have
  \[ \begin{aligned}
      \beta_{g_\epsilon}(\epsilon^{-2}e^{-nt}\omega\odot dt)_i|_{t=0} &= 0, \\
      \beta_{g_\epsilon}(\epsilon^{-2}e^{-nt}\omega\odot dt)_0|_{t=0}
      &= g_M^{ij}\nabla_i^M \omega_j.
      \end{aligned} \]
   We can apply Proposition~\ref{prop:Binverse} to find a two-tensor
   $k$ satisfying
   \[\beta_{g_\epsilon}(k)|_{t=0} =
   \beta_{g_\epsilon}(\epsilon^{-2}e^{-nt} \omega \odot dt)|_{t=0}, \]
   and the estimates
   \[ \begin{aligned}
     \Vert k\Vert_{C^{2,\alpha}_1}\leq C\Vert
     \omega\Vert_{C^{2,\alpha}_{g_M}} &\leq C\epsilon^{-1-\alpha}, \\
     \Vert L_{g_\epsilon', g_\epsilon} k\Vert_{C^{0,\alpha}_1} &\leq C_\delta
     \epsilon^{-1-\alpha}\epsilon^{2-\delta}\\
    &\leq C\epsilon^{1-\alpha-\delta},
\end{aligned} \]
  for any $\delta > 0$.

  We now set
  \[ h = h_1 + \epsilon^{-2}e^{-nt} \omega\odot dt - k. \]
  By construction we have
  \[ \beta(h)|_{t=0} = 0 \]
    as required, and
  \[ \Vert \omega\Vert_{C^{2,\alpha}_{\epsilon^{-2}g_M}} \leq
  C\epsilon \Vert \omega\Vert_{C^{2,\alpha}_{g_M}} \leq C
  \epsilon^{-\alpha} \]
  implies
  \[ \Vert h\Vert_{C^{2,\alpha}_1} \leq C\epsilon^{-2-\alpha}.\]
  In addition we have $\Vert r\Vert_{C^{2,\alpha}_1} \leq C
  \epsilon^{-1-\alpha}$, and
\[ \begin{aligned}
    &\Big\Vert u -
    \overline{L}_{g_\epsilon'}(r,h)  \Big\Vert_{C^{0,\alpha}_1} \\
    =\Big\Vert u - (D\mathrm{Ric}_{g_\epsilon'} + n) r -
    &L_{g_{\epsilon}',g_{\epsilon}}(\epsilon^{-2} e^{-nt} \omega\odot dt) -
    L_{g_{\epsilon}',g_{\epsilon}}h_1 + L_{g_{\epsilon}',g_{\epsilon}}k \Big\Vert_{C^{0,\alpha}_1}
\\ &= \Big\Vert u_1 - L_{g_{\epsilon}',g_{\epsilon}}h_1 + L_{g_{\epsilon}',g_{\epsilon}}k
\Vert_{C^{0,\alpha}_1} \\
&\leq C(\epsilon^{p-1-\alpha} + \epsilon^{1-\alpha-\delta}).
\end{aligned} \]
If $\alpha$ is sufficiently small, $p > 1$ and $\delta$ is small, then
for sufficiently small $\epsilon$ the map $u\mapsto (r,h)$ is then an
approximate inverse for $\overline{L}_{g_\epsilon'}$, and we can
perturb it to a genuine inverse.
\end{proof}

%
%
%

\end{document}